\documentclass[a4,11pt]{article}
\usepackage[dviout]{pict2e}
\usepackage{graphicx}
\usepackage{lscape}
\usepackage{enumerate}
\usepackage{latexsym}
\usepackage{amsthm}
\usepackage{subfig}

\usepackage{color}

\baselineskip=9mm
\usepackage{framed}

\newtheorem{theorem}{Theorem}




\newtheorem{lemma}{Lemma}

\textwidth=16cm
\textheight=23cm
\topmargin=-10mm
\oddsidemargin=0.0cm
\evensidemargin=0.0cm
\def\vec#1{\mbox{\boldmath $#1$}}
\usepackage{amsmath,amssymb}

\DeclareMathOperator{\argmin}{argmin}

\begin{document}

\title{Nonparametric smoothing for extremal quantile regression with heavy tailed distributions}

\author{
{\sc Takuma Yoshida}$^{1}$\\
$^{1}${\it Kagoshima University, Kagoshima 890-8580, Japan}\\
{\it E-mail: yoshida@sci.kagoshima-u.ac.jp}
}

\date{\empty}
\maketitle

\begin{abstract}
In several different fields, there is interest in analyzing the upper or lower tail quantile of the underlying distribution rather than mean or center quantile. 
However, the investigation of the tail quantile is difficult because of data sparsity. 
In this paper, we attempt to develop nonparametric quantile regression for the extremal quantile level. 
In extremal quantile regression, there are two types of technical conditions of the order of convergence of the quantile level: intermediate order or extreme order. 
For the intermediate order quantile, the ordinary nonparametric estimator is used. 
On the other hand, for the extreme order quantile, we provide a new estimator by extrapolating the intermediate order quantile estimator.
The performance of the estimator is guaranteed by asymptotic theory and extreme value theory. 
As a result, we show the asymptotic normality and the rate of convergence of the nonparametric quantile regression estimator for both intermediate and extreme order quantiles. 
A simulation is presented to confirm the behavior of the proposed estimator. 
The data application is also assessed. 
\end{abstract}

{\it Keywords:
Asymptotic normality; Extrapolation; Extremal quantile regression; Extreme value theory; Nonparametric estimator
}

{\it MSC codes: 62G08, 62G20, 62G32}
\section{Introduction}

In a wide variety of areas, such as in the study of heavy rainfall, low birth weight, and high-risk finance, the tail behavior of the distribution of the target variable is of interest rather than the average or median.
In these cases, we often investigate the upper or lower quantile of the data.
However, the estimation of the tail quantile is difficult because of data sparsity. 
Therefore, the development of the mathematical properties of the tail quantile would be welcome. 
The theoretical performance of the tail behavior of the distribution function is provided by extreme value theory (EVT). 
The fundamental properties of EVT were surveyed by Coles (2001), Beirlant et al. (2004a), and de Haan and Ferreira (2006). 
On the other hand, the performance of the estimator is often guaranteed by a large sample or asymptotic theory in statistics. 
Thus, the mathematical properties of the estimator of the tail quantile are analyzed using a hybrid of EVT and asymptotic theory. 
In many cases, it is important to research the target variable along with the information of other variable. 
Then we should analyze the data in the literature of regression.
In this paper, we consider the estimation of the extremal conditional quantile of the response $Y$ given $X=x$.

Many researchers have developed the extremal conditional quantile estimation. 
Beirlant and Goegebeur (2004) developed a Pareto distribution approach.
Gardes et al. (2010) and Gardes and Girard (2010) studied the nearest-neighbor estimation. 
Daouia et al. (2011, 2013), El Methni et al. (2014), and Girard and Louhichi (2015) investigated the extremal quantile of the nonparametric estimator of the conditional distribution function of $Y$ given $X=x$. 
The local-moment-type methods were studied by Goegebeur et al. (2017). 
Durrieu et al. (2015) have developed the weighted quasi-log-likelihood method.
On the other hand, quantile regression, which was pioneered by Koenker and Bassett (1978), is a typical approach used to investigate the conditional quantile.  
For the center quantile, several authors have developed quantile regression methods. 
These fundamental developments have been summarized by Koenker (2005). 
However, much less work has been done on quantile regression for the extremal quantile. 
Chernozhukov (2005), Chernozhukov and Fern\'andez-val (2011), Wang et al. (2012), and He et al. (2016) studied extremal quantile regression, but they focused only on linear models. 
For the tail quantile, the linear structure assumption is strong and its assumption is violated in several cases. 
Therefore, a nonparametric approach should be used in extremal quantile regression in such situations. 
Beirlant et al. (2004b) studied extremal nonparametric quantile regression, but the theoretical property was not investigated.
In this paper, we develop nonparametric quantile regression for the extremal quantile and mathematical properties. 

Before we describe our study, we review extremal quantile regression with linear models in more detail.
For extremal quantile regression, the quantile level $\tau$ approaches 0 or 1 as the sample size $n$ increases. 
This paper treats only the upper quantile and, hence, $\tau\rightarrow 1$ as $n\rightarrow \infty$.
Thus, there are two important types of the order of $\tau$: the intermediate order quantile and the extreme order quantile. 
The former means that $\tau \rightarrow 1$ and $n(1-\tau)\rightarrow \infty$ as $n\rightarrow \infty$, whereas in the latter $\tau \rightarrow 1$ and $n(1-\tau)\rightarrow c\in [0,\infty)$ as $n\rightarrow \infty$. 
If $\tau$ is fixed, it is the so-called center quantile.
According to Chernozhukov (2005) and Chernozhukov and Fern\'andez-val (2011), the quantile regression estimator with linear models has asymptotic normality for the intermediate order quantile but it converges to a non-degenerated distribution (not normal) for the extreme order quantile. 
Thus, the extreme order quantile is difficult to handle.
Wang et al. (2012) provided a nice approach to obtain the extreme order quantile estimator by extrapolation from the intermediate order quantile estimator. 
As a result, this extrapolated estimator has asymptotic normality. 
It seems that above results should be extended to the nonparametric quantile regression for many applications.

In this paper, we first construct the ordinary nonparametric estimator for the intermediate order quantile. 
We then use the $B$-spline method with $\ell_2$ penalty. 
This approach was originally considered by O'Sullivan (1986) and Eilers and Marx (1996) in mean regression. 
Pratesi et al. (2009), Reiss and Huang (2012), and Yoshida (2013) used the quantile regression for only the center quantile. 
We show the asymptotic bias and variance as well as the asymptotic normality of the penalized spline estimator.
Next, the extrapolated estimator is obtained for the extreme order quantile. 
Similar to the approach of Wang et al. (2012), we use the Weissman-type extrapolation method (see Weissman 1978). 
The asymptotic normality and the optimal rate of convergence of the extreme order quantile estimator are shown. 
To the best of our knowledge, this is the first time that the rate of convergence of the nonparametric estimator is revealed in the extremal quantile regression.

This paper is organized as follows. 
In Section 2, we coordinate the conditions of the true conditional quantile by EVT in nonparametric extremal quantile regression. 
Section 3 presents the nonparametric estimator and its asymptotic property for both intermediate and extreme order quantiles.
In Section 4 the Monte Carlo simulation is conducted to confirm the performance of the proposed estimator. 
Section 5 addresses the application to Beijing's ${\rm PM}_{2.5}$ pollution data. 
The conclusions and future research are described in Section 6. 
In the Appendix, the computational aspects of the penalized spline estimator and the proofs of the mathematical results that appear in this paper are presented. 
Throughout the paper and without loss of generality, we focus on the conditional high quantile because a low quantile of the response can be viewed as a high quantile of the inverted sign of the response.

\section{Conditional extremal quantiles}

\subsection{Extreme value theory}

Let $\{(X_i,Y_i);\ i = 1,\ldots,n\}$ be the independent copies of a random pair $(X,Y)\in\mathbb{R}\times\mathbb{R}$. 
We assume that the support of $X$ is bounded on $[a,b]$, where $-\infty<a<b<\infty$.
The conditional distribution function of $Y$ given $X=x$ is denoted by $F_Y(y|x)=P(Y\leq y|X=x)$. 
Then the conditional $100\tau\%$ quantile of $Y$ given $X=x$ is 
$$
q_Y(\tau|x)\equiv F_Y^{-1}(\tau|x)=\inf\{t, F_Y(t|x)\geq  \tau\}.
$$ 
The main purpose of this study is to estimate $q_Y(\tau|x)$ for a high quantile level $\tau\simeq  1$. 
The tail behavior of the distribution or quantile function can be characterized by EVT.
To analyze the conditional high quantile of $Y$ given $X=x$, we introduce the EVT conditions of $F_Y(\cdot|x)$ and $q_Y(\cdot|x)$. 
We first provide the error $Z$ to incorporate the stochastic structure of $Y$ given $X=x$. 
Here, we assume that $Z$ is independent to predictor $X$. 
Define $F(z)$ and $q(\tau)$ as the marginal distribution function and $100\tau\%$ quantile of $Z$.
Throughout the paper, we assume that $F$ and $F(\cdot|x)$ belong to the maximum domain of attraction of an extreme value distribution $G_\gamma$, denoted by $F,\ F_Y(\cdot|x)\in D(G_\gamma)$. 
The distribution function $Q$ belongs to the maximum domain of attraction, which means that for the random sample $Z_1,\ldots,Z_n$ from $Q$, there exists a constant $\alpha_n>0$ and $\beta_n\in\mathbb{R}$ such that for $1+\gamma z\geq 0$, 
$$
P\left(\frac{\max_{1\leq i\leq n}Z_i-\beta_n}{\alpha_n}\leq z\right)\rightarrow G_\gamma(z)=\exp[-(1+\gamma z)^{-1/\gamma}]
$$
as $n\rightarrow \infty$.
Here, $\gamma\in\mathbb{R}$ is the extreme value index (EVI). 
The EVI is very important since this generally controls the tail behavior of the distribution function. 
For $Q\in D(G_\gamma)$, if $\gamma=0$ or $\gamma<0$, $Q$ has a light tail or short tail. 
When $\gamma>0$, $Q$ has a heavy tail.
This paper only discusses the heavy-tail case and, hence, we assume that $\gamma>0$ from now on. 
The maximum domain of attraction is a very weak condition. 
For example, uniform, beta, Gaussian, $t$, Pareto, Cauchy, and many other distributions belong to the maximum domain of attraction with appropriately specified $\gamma\in\mathbb{R}$. 
The details of the maximum domain of attraction and EVI are given in fundamental books such as that by de Haan and Ferreira (2006). 

We now state the conditions to connect the tail behavior of $F$ and $F(\cdot|x)$. 
For this, we need an additional definition. 
Let $RV(a)=\{A\in\mathbb{R}^+\rightarrow \mathbb{R}^+ |A(mt)/A(t)\rightarrow m^a {\rm as}\ t\rightarrow \infty, m>0\}$ be the set of regularly varying functions, where $\mathbb{R}^+=(0,\infty)$. 
When $A\in RV(0)$, $A$ is the so-called slowly varying function.

\medskip

\noindent{\bf Conditions A}
\begin{enumerate}
\item[A1.] For the error $Z\sim F$, there exists $L\in RV(0)$ such that the distribution function $F$ satisfies $1-F(z)=z^{-1/\gamma}L(z)\{1+o(1)\}$ as $z\rightarrow \infty$.
\item[A2.] We have $\tilde{q}(\tau)=\partial q(t)/\partial t|_{t=1-\tau}$ regularly varying at 0 with index $-\gamma-1$. 
That is, for $x>0$, 
$$
\lim_{\tau\rightarrow 0} \frac{\tilde{q}(x\tau)}{\tilde{q}(\tau)}=x^{-\gamma-1}.
$$
\item[A3.] For $Z\sim F$ and $Y|x \sim F_Y(\cdot|x)$, there exists an auxiliary function $f(x)$  such that $V\equiv Y-f(x)$ has the distribution function $F_V(y|x)$ satisfying, as $y\rightarrow \infty$,
\begin{eqnarray*}
1-F_V(y|x)=H(x)\{1- F(y)\}(1+o(1)),
\end{eqnarray*}
where $H(x)>0$ is a positive, continuous, and bounded function on $[a,b]$ and has $E[H(X)]=1$. 
\item[A4.] For $q_V(\cdot|x)=F^{-1}_V(\cdot|x)$, $\partial q_V(\tau|x)/\partial \tau= \partial q(1-(1-\tau)/H(x))/\partial \tau\{1+o(1)\}$ uniformly in $x\in[a,b]$ as $\tau\rightarrow 1$.
\end{enumerate}

\medskip

Conditions A may not hold if either $F$ or $F_Y(\cdot|x)$ is not included in $D(G_\gamma)$. 
In other words, if $F,\ F_Y(\cdot|x)\in D(G_\gamma)$, Conditions A are natural.
Condition A1 is the formal notation of a Pareto-type tail (see Chernozhukov and Fern\'andez-val 2011). 
The equivalent to condition A1 is 
\begin{eqnarray}
q(\tau)=(1-\tau)^{-\gamma}\bar{L}(1/(1-\tau))\{1+o(1)\} \quad {\rm as}\ \tau\rightarrow 1\label{equicon1}
\end{eqnarray}
with $\bar{L}\in RV(0)$.
Therefore, if the distribution $F$ is continuous and $\partial \bar{L}(1-\tau)/\partial\tau\rightarrow 0$ as $\tau\rightarrow 1$, condition A2 holds from condition A1. 
Actually, $\partial q(\tau)/\partial \tau\sim \partial\{(1-\tau)^{-\gamma}\bar{L}(1/(1-\tau))\}/\partial \tau=\gamma(1-\tau)^{-\gamma-1}\bar{L}(1/(1-\tau))+(1-\tau)^{-\gamma} \partial \bar{L}(1/(1-\tau))/\partial \tau=\gamma(1-\tau)^{-\gamma-1}\bar{L}(1/(1-\tau))\{1+o(1)\}$ as $\tau\rightarrow 1$. 
Therefore, we have $\tilde{q}(\tau)=\partial q(t)/\partial t|_{t=1-\tau}\sim \gamma\tau^{-\gamma-1}\bar{L}(1/\tau)\{1+o(1)\}$.
Thus, condition A2 is weak. 
Condition A3 provides the extremely location-scale shifted model having the Pareto-type conditional quantile tail of $Y$ given $X=x$ along with an auxiliary function $f(x)$. 
Actually, it it easy to show from A3 that $q_V(\tau|x)=H(x)^{\gamma}q(\tau)(1+o(1))$ as $\tau\rightarrow 1$. 
Furthermore, since $\tau=F_Y(q_Y(\tau|x)|x)=P(Y<q_Y(\tau|x)|x)=P(Y-f(x)<q_Y(\tau|x)-f(x)|x)=F_V(q_Y(\tau|x)-f(x)|x)$, we obtain $q_V(\tau|x)=q_Y(\tau|x)-f(x)$. 
Consequently, we have 
\begin{eqnarray}
q_Y(\tau|x)=f(x)+h(x)q(\tau)\{1+o(1)\}\quad {\rm as}\ \tau\rightarrow 1, \label{truequantile}
\end{eqnarray}
where $h(x)=H(x)^{\gamma}$.
Chernozhukov (2005), Chernozhukov and Fern\'andez-val (2011) and Wang et al. (2012) also provided this type of condition in multiple linear models.
That is, they further assumed that $f(x)=x^T\beta$ and $h(x)=x^T c$ for $x=(x_1,\cdots,x_p)$, where $\beta$ and $c$ are unknown $p$-dimensional parameter vectors. 
Thus, A3 is the nonparametric model version of the above previous studies.
Condition A4 guarantees the existence of a conditional quantile density function (the derivative of the quantile function). 
Furthermore, the conditional quantile density function also behaves like a Pareto-type function by condition A4. 
Assumption A3 is strengthened by condition A4.

\medskip

\noindent{\bf Remark 1.} Let $U(t)=q(1-1/t)=\inf\{z|F(z)\geq 1-1/t\}$ and let $U_V(t|x)=q_V(1-1/t|x)$. 
In several articles (see, for example, de Haan and Ferreira 2006), the conditions of EVT are applied to $U(t)$ and $U_V(t|x)$ as $t\rightarrow \infty$. 
Since $q(\tau)=U(1/(1-\tau))$, the condition (\ref{equicon1}) is similar to $U(t)=t^{\gamma}L(t)\{1+o(1)\}$ with $t=1/(1-\tau)$.
Condition A4 can also be expressed as $\partial U_V(t|x)/\partial t=\partial U(tH(x)) /\partial t$ with $t=1/(1-\tau)$. 
Thus, we can reconsider the EVT conditions for quantiles by using $U$ and $U_V$.
In particular, the use of $U$ is appropriate when using the second-order condition of EVT (see Section 3.2).

\subsection{$B$-spline model}

The conditional quantile $q_Y(\tau|x)$ can be written as 
\begin{eqnarray}
q_Y(\tau|x)= \argmin_{a} E[\rho_\tau(Y-a)|X=x], \label{quantmodel1}
\end{eqnarray}
where $\rho_\tau(u)=u(\tau-I(u<0))$ is Koenker's check function (Koenker 2005) and $I$ is the indicator function. 
The estimator of $q_Y$ is often obtained along with an empirical version of (\ref{quantmodel1}).
To estimate $q_Y(\tau|x)$, we use the $B$-spline regression method as the nonparametric technique in this paper.
Let $\{B_k(x):k=1,\ldots,K+p\}$ be the $p$th degree of the $B$-spline basis with knots $a=\kappa_0<\kappa_1<\ldots<\kappa_{K}=b$. 
In addition, other sets of $2p$ knots are defined as $\kappa_{-p}=\ldots=\kappa_{-1}=a$ and $\kappa_{K+1}=\ldots=\kappa_{K+p}=b$.
We then define the $B$-spline model as
$$
s(x)=\sum_{k=1}^{K+p} B_k(x)b_k=\vec{B}(x)^T\vec{b}, 
$$
where $\vec{B}(x)=(B_1(x),\ldots,B_{K+p}(x))^T$ and $\vec{b}=(b_1,\ldots,b_{K+p})^T$ is an unknown parameter vector. 
We now describe the relationship between the $B$-spline model and EVT discussed in the previous section. 
Let $W^m[a,b]=\{g|g^{(k)}\ {\rm is}\ {\rm continuous},\ k=0,1,\ldots,m-1\ {\rm and}\ \int_{a}^b \{g^{(m)}(x)\}^2dx<\infty\}$ be the $m$th-order Sobolev space.  
From Barrow and Smith (1978), for any function $g\in W^m([a,b])$, there exists $\vec{b}_g\in\mathbb{R}^{K+p}$ such that $g(x)-\vec{B}(x)^T\vec{b}_g=K^{-d}g^{(d)}(x)O(1)$ as $K\rightarrow \infty$, where $d=\min\{m,p+1\}$. 
For simplicity, we assume that $m\leq p+1$, that is $d=m$. 
Actually, $(p,m)=(3,2)$ is the standard condition of $B$-spline smoothing.

For $\tau\in(0,1)$, let
$$
\vec{b}_0(\tau)= \argmin_{\vec{b}\in\mathbb{R}^{K+p}} E[\rho_\tau(Y-s(x))|X=x]
$$
and let $s_0(\tau|x)=\vec{B}(x)^T\vec{b}_0(\tau)$. 
We then found that $q_Y(\tau|x)=s_0(\tau|x)+K^{-m} \{\partial^m q_Y(\tau|x)/\partial x^m\}O(1)$ for $q_Y(\cdot|x)\in W^m[a,b]$. 
If $h\in W^m[a,b]$, Conditions A and (\ref{truequantile}) yield that $\{\partial^m q_Y(\tau|x)/\partial x^m\}=h^{(m)}(x)q(\tau)(1+o(1))=O((1-\tau)^{-\gamma})$ and, hence, $s_0(\tau|x)-q_Y(\tau|x)=O(K^{-m}(1-\tau)^{-\gamma})$ as $\tau\rightarrow 1$ and $K\rightarrow \infty$, which indicates that the condition B4 below is required.
 
If $f$ and $h$ defined in (\ref{truequantile}) belong to $W^m[a,b]$, there exists $\vec{b}_f,\vec{b}_h\in\mathbb{R}^{K+p}$ such that $f(x)-\vec{B}(x)^T\vec{b}_f=O(K^{-m})$ and $h(x)-\vec{B}(x)^T\vec{b}_h=O(K^{-m})$. 
We then obtain $\vec{b}_0(\tau)= \vec{b}_f+q(\tau)\vec{b}_h+O(K^{-m}q(\tau))$ as $\tau\rightarrow 1$ and $K\rightarrow \infty$. 
Therefore, (\ref{equicon1}) and condition A4 indicate that $\partial s_0(\tau|x)/\partial \tau \sim \vec{B}(x)^T\vec{b}_h \partial q(\tau)/\partial \tau$ is satisfied since $\vec{b}_f$ and $\vec{b}_h$ are not dependent on $\tau$. 
Thus, the $B$-spline model also holds (\ref{truequantile}) and condition A4 and, hence, the tail behavior of the $B$-spline model can be studied by using Conditions A. 
The following conditions are the fundamental assumptions for $B$-spline regression. 

\medskip

\noindent{\bf Conditions B}
\begin{enumerate}
\item[B1.] For some constant $\nu\geq 0$, $E[|Y|^{2+\nu}|X=x]<\infty$.
\item[B2.] The functions $f$ and $h$ in (\ref{truequantile}) are included in $W^m[a,b]$. 
\item[B3.] We have $\max_{1\leq j\leq K}\{\kappa_{j+1}-\kappa_j\}=O(K^{-1})$.
\item[B4.] For some $\alpha\in(0,1)$, the number of knots $K=O(n^{\alpha})$. 
\item[B5.] As $\tau\rightarrow 1$ and $K\rightarrow \infty$, $K^m(1-\tau)^\gamma\rightarrow \infty$.
\end{enumerate}

\medskip

Condition B1 is needed to that the estimator satisfies the Lyapunov condition of central limit theorem.  
When condition B2 holds, the $B$-spline model can approximate to $q_Y(\tau|x)$.
Conditions B3 and B4 are standard conditions for $B$-spline models. 
Together with condition B2, the $B$-spline model and EVT are connected for high quantile level. 
Condition B5 guarantees that the model bias between the conditional quantile and $B$-spline model converges to 0.

\section{Penalized $B$-spline estimator for extremal quantiles} 

In this section, we define the nonparametric $B$-spline estimator and develop the asymptotic result. 
Then, we consider two scenario of extremal quantile rate: (i) intermediate order quantiles that $\tau\rightarrow 1$ and $(1-\tau)n\rightarrow \infty$ as $n\rightarrow \infty$ and (ii) extreme order quantiles that $\tau\rightarrow 1$ and $(1-\tau)n\rightarrow c \in [0,\infty)$ as $n\rightarrow \infty$. 
We denote the intermediate order quantile level by $\tau_I$ and the extreme order quantile level by $\tau_E$, respectively. 
That is, as $n\rightarrow \infty$, $\tau_I,\tau_E\rightarrow 1$, $n(1-\tau_I)\rightarrow \infty$, $n(1-\tau_E)\rightarrow c\in[0,\infty)$, and $(1-\tau_I)/(1-\tau_E)\rightarrow \infty$.

\subsection{Estimation of intermediate order quantiles}

The ordinary $B$-spline quantile estimator for $\tau\in(0,1)$ is defined based on minimizing
$\sum_{i=1}^n \rho_\tau(Y_i-s(x_i))$. 
However, it is known that the ordinary estimator tends to have a wiggly curve caused by data sparsity. 
To avoid this, we introduce the penalization method to control the behavior of the estimator.
Although various types of penalties have been developed to prevent overfitting, we will use O'Sullivan's (1986) penalty. 
For $\tau\in(0,1)$, the penalized spline estimator $\tilde{\vec{b}}(\tau)=(\tilde{b}_1(\tau),\ldots,\tilde{b}_{K+p}(\tau))^T$ of vector $\vec{b}(\tau)=(b_1(\tau),\ldots,b_{K+p}(\tau))^T$ is constructed by minimizing 
\begin{eqnarray}
\sum_{i=1}^n \rho_\tau(Y_i-s(x_i)) +\lambda \int_a^b \{s^{(m)}(x)\}^2dx, \label{nonparaquantile}
\end{eqnarray}
where $\lambda>0$ is the smoothing parameter. 
Using $\tilde{\vec{b}}(\tau)$, for the intermediate order quantile level $\tau_I$, we define 
$$
\tilde{q}_Y(\tau_I|x)=\vec{B}(x)^T\tilde{\vec{b}}(\tau_I). 
$$

We study the asymptotic theory for the penalized spline estimator $\tilde{q}_Y(\tau_I|x)$. 
Then, the conditions of the number of knots and the smoothing parameter included in $\tilde{q}_Y(\tau_I|x)$ are very important. 
The penalty $\int_a^b \{s^{(m)}(x)\}^2dx$ can be written as $\vec{b}^T D_m^T RD_m\vec{b}$, where the $(K+p)$-matrix $R$ has elements $R_{ij}=\int_a^b B_i(x)B_j(x)dx$ and the $(K+p-m)\times(K+p)$ matrix $D_m$ satisfies $\vec{b}^{(m)}=D_m\vec{b}$, where $\vec{b}^{(m)}=(b_1^{(m)},\ldots,b_{K+p-m}^{(b)})^T$, and for $m=1,2,\ldots,$
$$
b_j^{(1)}= p\frac{b_j-b_{j-1}}{\kappa_{j+p}-\kappa_j},\quad
b_j^{(m)}=(p+1-m)\frac{b_j^{(m-1)}-b_{j-1}^{(m-1)}}{\kappa_{j+p+1-m}-\kappa_j}.
$$
From now on, we use the symbols $D_m$ and $R$.
Let $G(h)$ be the $(K+p)$-matrix with elements $G_{ij}=\int_a^b h(x)B_i(x)B_j(x)dx$ and 
$$
\Lambda(h) =\Lambda(h,n,\tau_I)= \gamma^{-1}G(h^{-\gamma})+\frac{\lambda q(\tau_I)}{(1-\tau_I) n}D_m^TRD_m.
$$
Let $G=G(1)$ be $G(h)$ with $h(x)\equiv 1$. 
Define 
$$
K(m,\tau_I)= K\left(\frac{\lambda q(\tau_I)}{n(1-\tau_I)}\right)^{1/2m},
$$ 
which controls the asymptotic scenario branch discussed in Remark 1 below.

\noindent{\bf Conditions C}
\begin{enumerate}
\item[C1.] We have $K(m,\tau_I)\geq 1$.
\item[C2.] We have $K\{\lambda q(\tau_I)/n(1-\tau_I)\}^{1/2m}\rightarrow\infty$ as $n\rightarrow \infty$.
\item[C3.] We have $\lambda =o(q(\tau_I)^{-1} \{n(1-\tau_I)/q(\tau_I)\}^2)$ as $n\rightarrow \infty$.
\end{enumerate}

\medskip

Condition C concerns with the asymptotic property of the penalized spline estimator. 
C1 is detailed in Remark 2. 
C2 allows us to use the large $K$. 
If C3 fails, the asymptotic bias of the penalized spline estimator cannot be vanished.
We now show the asymptotic distribution of $\tilde{q}(\tau_I|x)$. 
First, we derive the two types of bias, model bias and shrinkage bias. 
Roughly speaking, the model bias is the bias between the $B$-spline model and the true function, and the shrinkage bias is the difference between the expectation of the penalized estimator and the unpenalized estimator. 
According to Section 2,2, the model bias is $b_a(\tau_I|x)=s_0(\tau_I|x)-q_Y(\tau_I|x)=O(K^{-m}q(\tau))$.  
This model bias becomes the negligible order from condition C2. 
That is, the bias is dominated by the shrinkage bias.
Define
\begin{eqnarray*}
b_\lambda(\tau|x)&=&\frac{\lambda q(\tau)}{(1-\tau) n}\vec{B}(x)^T\Lambda(H^{-\gamma})^{-1}D_m^TRD_m\vec{b}_0(\tau),\\
v^2(\tau|x)&=&\frac{q(\tau)^2}{(1-\tau) n}\vec{B}(x)^T\Lambda(H^{-\gamma})^{-1}G \Lambda(H^{-\gamma})^{-1}\vec{B}(x).
\end{eqnarray*}
As a result, $b_\lambda(\tau_I|x)$ is the asymptotic shrinkage bias and $v(\tau_I|x)$ is the asymptotic variance of $\tilde{q}_Y(\tau_I|x)$. 
The following theorem shows the asymptotic order of the asymptotic bias and variance of the intermediate order quantile estimator. 

\begin{theorem}\label{biasvariance1}
Under Conditions A--C, as $n\rightarrow \infty$, 
$$
b_\lambda(\tau_I|x)=O\left(q(\tau)\left(\frac{\lambda q(\tau_I)}{(1-\tau_I) n}\right)^{1/2}\right),\quad v^2(\tau_I|x)=O\left(\frac{q(\tau_I)^2}{(1-\tau_I)n}\left(\frac{\lambda q(\tau_I)}{(1-\tau_I) n}\right)^{-1/2m}\right).
$$
\end{theorem}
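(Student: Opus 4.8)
\noindent The statement is purely analytic: both $b_\lambda(\tau_I|x)$ and $v^2(\tau_I|x)$ are deterministic quadratic forms in the matrices $\Lambda(H^{-\gamma})$, $G$, $D_m^TRD_m$ and the vector $\vec{b}_0(\tau_I)$, so the plan is to control their orders by a spectral analysis of $\Lambda(H^{-\gamma})$. Writing $\mu=\lambda q(\tau_I)/\{(1-\tau_I)n\}$ for the penalty weight, we have $\Lambda(H^{-\gamma})=\gamma^{-1}G(\,\cdot\,)+\mu D_m^TRD_m$, and since the weight entering $G(\cdot)$ is positive, continuous and bounded on $[a,b]$ by Condition A3, it suffices up to fixed constants to treat $\Lambda$ as $G+\mu P$ with $G$ the (unweighted) Gram matrix and $P=D_m^TRD_m$. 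First I would record the standard $B$-spline facts: by de Boor's stability of the $B$-spline basis all eigenvalues of $G$ are of exact order $K^{-1}$, $\|\vec{B}(x)\|^2=O(1)$ by the partition-of-unity property, and $P$ is positive semidefinite with an $m$-dimensional polynomial null space.

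The engine is the simultaneous (Demmler--Reinsch) diagonalization of the pair $(G,P)$: there is a nonsingular $U$ with $U^TGU=I$ and $U^TPU=\diag(\theta_1,\dots,\theta_{K+p})$, whence $\Lambda^{-1}=U\diag\{(1+\mu\theta_j)^{-1}\}U^T$. Two quantitative inputs are needed, which I would isolate as a lemma on the spline spectrum: the generalized eigenvalues obey $\theta_j\asymp j^{2m}$ for $j\lesssim K$ (with maximum of order $K^{2m}$), and the associated $L^2[a,b]$-normalized spline eigenfunctions $\phi_j(x)=\vec{B}(x)^T u_j$ are uniformly bounded, $\phi_j(x)=O(1)$. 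Setting $w_j=\phi_j(x)$, these turn the two target forms into frequency sums.

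For the variance the diagonalization gives
\[ \vec{B}(x)^{T}\Lambda^{-1}G\Lambda^{-1}\vec{B}(x)=\sum_{j}\frac{w_j^{2}}{(1+\mu\theta_j)^{2}}\asymp\sum_{j\ge1}\frac{1}{(1+\mu j^{2m})^{2}}\asymp\mu^{-1/2m}, \]
the sum being governed by the cutoff frequency $j^\ast=\mu^{-1/2m}$ (terms with $\mu\theta_j\lesssim1$ each contribute $O(1)$, the tail is summable). Multiplying by the prefactor $q(\tau_I)^2/\{(1-\tau_I)n\}$ gives the claimed order of $v^2(\tau_I|x)$. Here Condition C1, namely $K(m,\tau_I)=K\mu^{1/2m}\ge1$, is exactly what guarantees $j^\ast\le K$, so the active modes are present and the sum attains its natural order; C2 makes this strict in the limit.

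For the shrinkage bias the diagonalization yields $b_\lambda(\tau_I|x)=\sum_j\{\mu\theta_j/(1+\mu\theta_j)\}\,\zeta_j\,\phi_j(x)$ with $\zeta_j=\langle s_0,\phi_j\rangle$, and the clean route to the stated order is Parseval combined with the roughness budget. Since $\{\phi_j\}$ is $L^2[a,b]$-orthonormal,
\[ \int_a^b b_\lambda(\tau_I|x)^{2}\,dx=\sum_{j}\Big(\frac{\mu\theta_j}{1+\mu\theta_j}\Big)^{2}\zeta_j^{2}\le\mu\sum_{j}\theta_j\zeta_j^{2}=O\big(\mu\,q(\tau_I)^{2}\big), \]
where I used the elementary bound $\mu\theta/(1+\mu\theta)\le(\mu\theta)^{1/2}$ and the identity $\sum_j\theta_j\zeta_j^2=\vec{b}_0(\tau_I)^TD_m^TRD_m\vec{b}_0(\tau_I)=\int_a^b\{s_0^{(m)}(x)\}^2dx=O(q(\tau_I)^2)$, itself following from Condition B2 and the expansion $s_0(\tau_I|\cdot)\approx f+q(\tau_I)h$ of Section 2.2. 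This already delivers the rate $q(\tau_I)\mu^{1/2}$ for the bias in the integrated sense. The hard part will be upgrading this to the pointwise bound of the same order and checking that the model bias $O(K^{-m}q(\tau_I))$ is dominated: crude operator-norm estimates on $\Lambda^{-1}$ lose spurious powers of $K$, so the pointwise control genuinely requires the eigenfunction bounds $\phi_j(x)=O(1)$ and the availability of the whole band $j\le K$ secured by C1--C2, and it is here that I expect the main technical obstacle. Once the order of $b_\lambda$ is established, Condition C3 is precisely what forces the bias to be negligible against $v(\tau_I|x)$, completing the identification of the two orders.
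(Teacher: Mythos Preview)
Your spectral route via Demmler--Reinsch diagonalization is a legitimate alternative, but it is \emph{not} the route the paper takes. The paper proves Theorem~\ref{biasvariance1} through elementwise $\|\cdot\|_\infty$ bounds on band matrices and their inverses (Lemmas~\ref{cla1}--\ref{cla2}, resting on Demko's exponential-decay theorem and the Claeskens--Krivobokova--Opsomer machinery). Concretely, the paper records $\|G\|_\infty=O(K^{-1})$, $\|\Lambda(H^{-\gamma})^{-1}\|_\infty=O\big(K(1+K(m,\tau)^{2m})^{-1}\big)$, and $\|D_m\|_\infty=O(K^{m})$; for the bias it writes $D_m^TRD_m\vec{b}_0(\tau)=D_m^T\!\int \vec{B}^{[p-m]}s_0^{(m)}$, uses $s_0^{(m)}(\tau|x)=O(q(\tau))$, multiplies orders through Lemma~\ref{cla2}, and closes with the scalar inequality $K(m,\tau)^m/(1+K(m,\tau)^{2m})\le 1/2$; the variance is handled by two more applications of Lemma~\ref{cla2}. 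Because the band-matrix bounds are already pointwise, the paper never faces the $L^2$-to-pointwise upgrade you flag as the ``main technical obstacle''.

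Your approach trades that obstacle for two nontrivial spectral facts you would have to prove: $\theta_j\asymp j^{2m}$ for the penalized-spline generalized eigenvalues, and uniform boundedness $\phi_j(x)=O(1)$ of the Demmler--Reinsch eigenfunctions. Both are plausible under Conditions~B3--B4 but are not standard B-spline lemmas in this paper's toolkit, and the second is genuinely delicate (it is what your variance sum $\sum_j w_j^2/(1+\mu\theta_j)^2$ and your pointwise bias bound both rely on). A further wrinkle you glossed over is that the variance form is $\Lambda(H^{-\gamma})^{-1}G\,\Lambda(H^{-\gamma})^{-1}$ with the \emph{unweighted} $G$ sandwiched between inverses built from the \emph{weighted} $G(H^{-\gamma})$; your diagonalization of $(G(H^{-\gamma}),P)$ does not simultaneously diagonalize $G$, so you need an extra spectral-equivalence step ($G\asymp G(H^{-\gamma})$ since $H$ is bounded and bounded away from zero) before the frequency sum is justified. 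None of this is fatal, but the paper's band-matrix argument sidesteps all of it and reaches the pointwise orders in a few lines.
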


From condition C3 and Theorem \ref{biasvariance1}, we see that the shrinkage bias and variance converge to 0 as $n\rightarrow \infty$.
Using the central limit theorem, Lyapunov's condition, and a Cram\'er--Wold device, the asymptotic normality of the estimator $\tilde{q}_Y(\tau_I|x)$ can be shown. 

\begin{theorem}\label{as.dist}
Suppose that Conditions A--C hold. As $n\rightarrow \infty$, $b_\lambda(\tau|x)$ and $v^2(\tau|x)$ are the asymptotic bias and variance of $\tilde{q}_Y(\tau|x)$ and 
\begin{eqnarray*}
 \left(\frac{v(\tau_I|x)}{q_Y(\tau_Y|x)}\right)^{-1}\left\{\frac{\tilde{q}_Y(\tau_I|x)}{q_Y(\tau_Y|x)}-1-\frac{b_\lambda(\tau_I|x)}{q_Y(\tau_Y|x)}\right\} \stackrel{D}{\longrightarrow } N(0,1).
\end{eqnarray*}
Furthermore, under $\lambda=O(q(\tau_I)^2\{(1-\tau_I)n\}^{1/(2m+1)})$, the optimal rate of convergence of the mean integrated squared error (MISE) of $\tilde{q}_Y(\tau_I|x)/q_Y(\tau_I|x)$ is 
$$
E\left[\left\{\frac{\tilde{q}_Y(\tau_I|x)}{{q}_Y(\tau_I|x)}-1\right\}^2\right]=O(\{(1-\tau_I)n\}^{-2m/(2m+1)}).
$$
\end{theorem}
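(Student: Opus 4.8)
The plan is to derive a Bahadur-type linear representation of the penalized estimator and then read off both the limiting normal law and the mean squared error from it. Writing $\psi_\tau(u)=\tau-I(u<0)$, the minimizer $\tilde{\vec{b}}(\tau_I)$ of (\ref{nonparaquantile}) satisfies the subgradient condition $\sum_{i=1}^n \psi_{\tau_I}(Y_i-\vec{B}(x_i)^T\tilde{\vec{b}}(\tau_I))\vec{B}(x_i)=\lambda D_m^T R D_m\tilde{\vec{b}}(\tau_I)$ up to a term of smaller order coming from the non-differentiability of $\rho_{\tau_I}$. First I would center at $\vec{b}_0(\tau_I)$ and linearize the expected score: its slope is the density-weighted Gram matrix $E[f_Y(q_Y(\tau_I|x)|x)\vec{B}(x)\vec{B}(x)^T]$, which by (\ref{truequantile}), conditions A2 and A4, and $q'(\tau_I)\asymp\gamma q(\tau_I)/(1-\tau_I)$ equals $\{(1-\tau_I)/(\gamma q(\tau_I))\}G(H^{-\gamma})\{1+o(1)\}$. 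After rescaling the full (loss-plus-penalty) sum Hessian by $q(\tau_I)/\{n(1-\tau_I)\}$, the loss part becomes $\gamma^{-1}G(H^{-\gamma})$ and the penalty part becomes $\{\lambda q(\tau_I)/(n(1-\tau_I))\}D_m^T R D_m$, so their sum is exactly $\Lambda(H^{-\gamma})$. The representation therefore reads $\tilde{\vec{b}}(\tau_I)-\vec{b}_0(\tau_I)=\Lambda(H^{-\gamma})^{-1}\{\frac{q(\tau_I)}{n(1-\tau_I)}\sum_i\psi_{\tau_I}(\delta_i)\vec{B}(x_i)-\frac{\lambda q(\tau_I)}{n(1-\tau_I)}D_m^T R D_m\vec{b}_0(\tau_I)\}$ up to a remainder negligible relative to the leading terms, where $\delta_i=Y_i-s_0(\tau_I|x_i)$; projecting onto $\vec{B}(x)$ yields the deterministic shrinkage bias $b_\lambda(\tau_I|x)$ and a mean-zero stochastic term whose variance is $v^2(\tau_I|x)$.

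To justify this rigorously I would use the convexity lemma of Pollard and Knight: the rescaled objective, as a convex function of the local parameter, converges in probability to a convex quadratic whose unique minimizer is the leading term, and convexity transfers this to the argmin. Showing that the fluctuation of the non-smooth part is asymptotically negligible---stochastic equicontinuity of the empirical process indexed by the growing-dimensional $B$-spline parametrization in the extremal regime $\tau_I\to1$---is the main obstacle; here the banded structure of the $B$-spline Gram matrix (each $B_k$ supported on $p+1$ knot intervals), the mesh condition B3, the knot-growth condition B4, and conditions C keep the remainder controlled, while the effective sample size $n(1-\tau_I)\to\infty$ provides the concentration. For the limit law I would then apply the Lyapunov central limit theorem to the leading stochastic term $\vec{B}(x)^T\Lambda(H^{-\gamma})^{-1}\{q(\tau_I)/(n(1-\tau_I))\}\sum_i\psi_{\tau_I}(\delta_i)\vec{B}(x_i)$, a sum of independent, bounded summands; condition B1 supplies the $(2+\nu)$-moment control making the Lyapunov ratio vanish, and $\mathrm{Var}(\psi_{\tau_I}(\delta_i))=\tau_I(1-\tau_I)\{1+o(1)\}$ reproduces the $(1-\tau_I)$ factor in $v^2$. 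This gives $v(\tau_I|x)^{-1}\{\tilde{q}_Y(\tau_I|x)-s_0(\tau_I|x)-b_\lambda(\tau_I|x)\}\stackrel{D}{\longrightarrow}N(0,1)$. Since the model bias satisfies $s_0(\tau_I|x)-q_Y(\tau_I|x)=O(K^{-m}q(\tau_I))$, which is negligible relative to $v(\tau_I|x)$ by condition B5 (equivalently C2), I may replace $s_0(\tau_I|x)$ by $q_Y(\tau_I|x)$; dividing the numerator and the scale by the deterministic $q_Y(\tau_I|x)$ then produces the stated ratio form.

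For the MISE statement I would use the bias--variance decomposition implied by the representation, $E[\{\tilde{q}_Y(\tau_I|x)/q_Y(\tau_I|x)-1\}^2]=\{b_\lambda(\tau_I|x)/q_Y(\tau_I|x)\}^2+v^2(\tau_I|x)/q_Y(\tau_I|x)^2+(\text{lower order})$. Substituting the orders from Theorem \ref{biasvariance1} and using $q_Y(\tau_I|x)\asymp q(\tau_I)$ gives a squared relative bias of order $A:=\lambda q(\tau_I)/\{n(1-\tau_I)\}$ and a relative variance of order $\{n(1-\tau_I)\}^{-1}A^{-1/2m}$. Minimizing their sum over $\lambda$ (equivalently over $A$) gives the balance $A\asymp\{n(1-\tau_I)\}^{-2m/(2m+1)}$, i.e.\ the stated optimal order of $\lambda$, at which both terms are of order $\{n(1-\tau_I)\}^{-2m/(2m+1)}$, which is the claimed rate. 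Because the orders in Theorem \ref{biasvariance1} hold uniformly in $x$, integrating over $x$ preserves the rate; the only point needing care is that the cross terms and higher-order Bahadur remainders are genuinely smaller than the balanced bias and variance, which again follows from conditions C.
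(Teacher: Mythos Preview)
Your proposal is correct and follows essentially the same route as the paper: both arguments localize around $\vec{b}_0(\tau_I)$, identify the effective Hessian as $\Lambda(H^{-\gamma})$ via the relation $f_Y(q_Y(\tau_I|x)\mid x)\sim\gamma^{-1}H(x)^{-\gamma}(1-\tau_I)/q(\tau_I)$, apply the Pollard--Knight convexity lemma to transfer convergence of the (convex) localized objective to its argmin, and invoke a Lyapunov CLT for the score term; the MISE balancing is also identical. The only cosmetic difference is that the paper carries out the objective-function expansion explicitly via Knight's identity $\rho_\tau(u-v)-\rho_\tau(u)=-v\psi_\tau(u)+\int_0^v\{I(u\le s)-I(u\le 0)\}\,ds$, splitting the localized loss into the linear score $W_n(\tau)$ and the curvature term $G_n(\vec{\delta}\mid\tau)$, whereas you frame the same computation first as a Bahadur linearization of the subgradient equation before appealing to convexity.
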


Theorems \ref{biasvariance1} and \ref{as.dist} yield that the trade-off between bias and variance is controlled by $\lambda$. 
Thus, this indicates that the careful choice of $K$ is not important in the penalized spline methods.
According to Yoshida (2013), for the center quantile level $\tau$, the MISE of the penalized spline quantile estimator has the order $O(n^{-2m/(2m+1)})$. 
Thus, the rate of convergence of the MISE of the penalized spline estimator for the intermediate quantile level is slower than that for the center quantile level. 
This result is not surprising in the context of the difficulties of the estimation for the tail quantile. 

When $\vec{B}(x)=\vec{x}$ and $\lambda=0$, the estimator is reduced to the ordinary quantile regression with the linear model. 
In the linear regression, the model bias is 0 and, hence, the bias term vanishes. 
On the other hand, since $G=E[XX^T]$ and $\Lambda(H^{-\gamma})^{-1}=\gamma^{-1} E[H(X)^{-\gamma}XX^T]^{-1}$, the 
asymptotic variance becomes 
$$
v^2(\tau_I|\vec{x})=\frac{q(\tau_I)^2}{(1-\tau_I)n}\gamma^2\vec{x}^TE[H(X)^{-\gamma}XX^T]^{-1}E[XX^T]E[H(X)^{-\gamma}XX^T]^{-1}\vec{x},
$$
which is similar in form to the asymptotic variance of the linear estimator of Lemma 3 of Wang et al.\ (2012).
Then, the rate of convergence of the MISE of the linear estimator is $E[\{\tilde{q}_Y(\tau_I|x)/q_Y(\tau|x)-1\}^2]=O(\{(1-\tau_I)n\}^{-1})$. 
Thus, it can be considered that Theorem 2 is the generalization of the asymptotic result of the linear-type parametric estimator.

\medskip

\noindent{\bf Remark 2.} 
Claeskens et al.\ (2009) have studied the asymptotic properties of the penalized spline mean estimator in two scenarios: 
roughly speaking, case (a) small $K$ scenario and case (b) large $K$ scenario. 
In case (a), the asymptotic behavior of penalized splines is similar to that of regression splines, which have the unpenalized estimator ($\lambda\equiv 0$). 
Case (b) results in the penalized splines nearing the smoothing splines. 
We briefly describe the asymptotic scenario branch along with the result of Claeskens et al.\ (2009). 
The penalized spline mean estimator is obtained as $\hat{f}(x)=\vec{B}(x)^T(Z^TZ+(\lambda/n)D_m^TRD_m)^{-1}Z^T \vec{y}$, where $Z=(\vec{B}(x_1),\ldots,\vec{B}(x_n))^T$ is the design matrix and $\vec{y}=(y_1,\ldots,y_n)$. 
Then, the two asymptotic scenarios are divided by the asymptotic order of the maximum eigenvalue of $(Z^TZ+(\lambda/n)D_m^TRD_m)^{-1}$, which is obtained as $K(m)^{2m}$
$$
K(m)=K\left(\frac{\lambda}{n}\right)^{1/2m}(1+o(1)).
$$
If $K(m)<1$ for a sufficiently large $n$, $K$, and $\lambda$, we achieve case (a). 
When $K(m)>1$ for a sufficiently large $n$, $K$, and $\lambda$, we achieve case (b). 
Although Claeskens et al.\ (2009) focused only on mean regression, these two scenarios can also be discussed with respect to quantile regression. 
The asymptotic scenario branch discussed in this section is dependent on the asymptotic order of $\Lambda(h)^{-1}$. 
Similar to Claeskens et al.\ (2009), the order of the maximum eigenvalue of $\Lambda(h)^{-1}$ can be obtained as $K(m,\tau_I)^{2m}$, which corresponds to $K(m)$ in mean regression.  
Consequently, condition C1 indicates that the large $K$ scenario should be studied. 
We finally note why we focus on the large $K$ scenario.
Ruppert (2002) recommended that one should first set the knots with a large $K$ to obtain the overfitted estimator and control $\lambda$ to achieve smoothness and fitness. 
Therefore, the large $K$ scenario matches the concept of Ruppert (2002) and this motivates us to consider the large $K$ scenario.

\vspace{3mm}

\subsection{Estimation of extreme order quantiles}

For the extreme order quantile, the estimator $\tilde{q}_Y(\tau_E|x)$ discussed in the previous section would not have asymptotic normality (Chernozhukov 2011). 
In this paper, we try to approximate the extreme conditional quantile from intermediate quantile. 
According to Weissman (1978), the following holds: 
$$
q_Y(\tau_E|x)\approx \left(\frac{1-\tau_I}{1-\tau_E}\right)^\gamma q_Y(\tau_I|x).
$$
From this, using the estimator of the intermediate order quantile $\tilde{q}_Y(\tau|x)$, we define the extrapolated estimator of the extreme order quantile. 
To achieve this, we need to estimate the EVI $\gamma$.

Let $\tau_1>\ldots>\tau_k$ be the sequence of quantile levels, where $\tau_j=1-([n^\eta]+j)/(n+1)$, $\eta\in(0,1)$ and $[a]$ is the integer part of $a$. 
Then, since $(1-\tau_j)n=n([n^\eta]+j)/(n+1)\rightarrow \infty$ as $n\rightarrow \infty$, all $\tau_j$ are intermediate order quantiles. 
Using this sequence, we define the Hill-types estimator of $\gamma$ as 
\begin{eqnarray*}
\hat{\gamma}(x)=\frac{1}{k-1}\sum_{j=1}^{k-1} \log\left(\frac{\tilde{q}_Y(\tau_j|x)}{\tilde{q}_Y(\tau_k|x)}\right).
\end{eqnarray*}
In this paper, we assume that the tail behavior of $F_Y(\cdot|x)$ and $F(\cdot)$ are equivalent (see, Condition A1). 
Therefore, it is somewhat unnatural that the estimator of $\gamma$ varies with $x$. 
Nevertheless, we define the extrapolated estimator with $\hat{\gamma}(x)$ and investigate the mathematical property. 
For $x\in[a,b]$, using $\hat{\gamma}(x)$, we define the estimator of the extreme order quantile as 
\begin{eqnarray*}
\hat{q}_Y(\tau_E|x)= \left(\frac{1-\tau_I}{1-\tau_E}\right)^{\hat{\gamma}(x)} \tilde{q}_Y(\tau_I|x).
\end{eqnarray*}

We next consider the EVI estimator along with condition A1. 
Define the common index (pooled) estimator 
$$
\hat{\gamma}^{C}=\frac{1}{n}\sum_{i=1}^n \hat{\gamma}(x_i)
$$
and the extrapolated estimator with common index estimator $\hat{\gamma}^{C}$ as
\begin{eqnarray*}
\hat{q}^{C}_Y(\tau_E|x)= \left(\frac{1-\tau_I}{1-\tau_E}\right)^{\hat{\gamma}^{C}} \tilde{q}_Y(\tau_I|x).
\end{eqnarray*}
  
To investigate the asymptotic distribution of $\hat{q}_Y(\tau_E|x)$ and $\hat{q}^{C}_Y(\tau_E|x)$, we impose the second-order condition in Conditions A.
\begin{itemize}
\item[A5] The function $U(t)=F^{-1}(1-1/t)$ satisfies the second-order condition with $(\gamma,\rho,A)$. That is, there exist $\rho<0$ and $A(t)\in RV(\rho)$ such that as $t\rightarrow \infty$,
\begin{eqnarray*}
A(t)^{-1}\left\{\frac{U(tz)}{U(t)}-z^\gamma\right\}\rightarrow \frac{(z^\gamma)(z^\rho-1)}{\rho}.
\end{eqnarray*}
Furthermore, $A(t)=\gamma d t^\rho$ with $d\not=0$.
\item[${\rm A3}^\prime$] There exist $\delta>0$ and positive, continuous and bounded function $H_1(x)$ such that as $y\rightarrow \infty$,
$$
1-F_V(y|x)=H(x)\{1-F(y)\}+H_1(x)(1-F(y))^{1+\delta}(1+o(1))
$$
\end{itemize} 

Condition A5 is the standard second-order condition of EVT and is detailed in de Haan and Ferreira (2006). 
Condition ${\rm A3}^\prime$ provides the second order of tail behavior of $F(y)$. 
From conditions A5 and ${\rm A3}^\prime$, we see that $U_Y(\cdot|x)$ also satisfy the second-order condition with $(\gamma,\rho^*=\min\{\rho,-\delta\},A^*(\cdot|x))$ and $A^*(t|x)=\gamma d^*(x) t^{\rho^*}$ with $d^*(x)\not=0$, which were proven in Lemma 2 of Wang et al. (2012). 
Using this, we show the asymptotic property of the Hill-type estimator of the EVI in the following. 

\begin{theorem}
Suppose that the smoothing parameter included in $\tilde{q}(\tau_I|x)$ satisfies $\lambda=O(q(\tau_I)^2\{(1-\tau_I)n\}^{1/(2m+1)})$. 
Furthermore, suppose that $k\rightarrow \infty$, $k/n\rightarrow 0, n^\eta\log(k)/k^{m/(2m+1)}\rightarrow 0$ and $k^{m/(2m+1)}(n/k)^{\max\{-\gamma,-\delta,\rho\}}\rightarrow 0$ as $n\rightarrow \infty$. Under Conditions A--C, as $n\rightarrow \infty$, 
\begin{eqnarray*}
\frac{\hat{\gamma}(x)-\gamma-b(k|x)}{v(k|x)}\stackrel{D}{\longrightarrow } N(0,1),\ \ \ 
\end{eqnarray*}
and 
\begin{eqnarray*}
\frac{\hat{\gamma}^{C}-\gamma-E[b(k|X)]}{E[v(k|X)]}\stackrel{D}{\longrightarrow } N(0,1),
\end{eqnarray*}
where $b(k|x)$ and $v(k|x)$ are defined in (\ref{asbiasvarianceevi}) of Appendix and have an asymptotic order $O(k^{-m/(2m+1)})$. 
Furthermore, 
$$
E[\left\{\hat{\gamma}(x)-\gamma\right\}^2]=O\left(k^{-2m/(2m+1)}\right)
$$
and 
$$
E[\left\{\hat{\gamma}^{C}-\gamma\right\}^2]=O\left(k^{-2m/(2m+1)}\right).
$$
\end{theorem}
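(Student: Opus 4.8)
The plan is to carry over the classical Hill-estimator analysis, with the order statistics replaced by the penalized spline intermediate-quantile estimators $\tilde{q}_Y(\tau_j|x)$, so that the sampling fluctuations are controlled by Theorems~\ref{biasvariance1} and \ref{as.dist} rather than by R\'enyi spacings. Writing $R_j(x)=\log\{\tilde{q}_Y(\tau_j|x)/q_Y(\tau_j|x)\}$, I would first split
\begin{eqnarray*}
\hat{\gamma}(x)-\gamma=\Bigl\{\frac{1}{k-1}\sum_{j=1}^{k-1}\log\frac{q_Y(\tau_j|x)}{q_Y(\tau_k|x)}-\gamma\Bigr\}+\Bigl\{\frac{1}{k-1}\sum_{j=1}^{k-1}R_j(x)-R_k(x)\Bigr\},
\end{eqnarray*}
so that the first bracket is a deterministic EVT term and the second collects the estimation error of the spline fits; the whole argument is then organised around these two pieces.

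For the deterministic term I would use the representation (\ref{truequantile}), $q_Y(\tau|x)=f(x)+h(x)q(\tau)\{1+o(1)\}$, together with the second-order expansion of $U_Y(\cdot|x)$ with index $(\gamma,\rho^\ast,A^\ast)$ furnished by Lemma~2 of Wang et al.\ (2012). The leading part reproduces the exact Hill identity $\frac{1}{k-1}\sum_{j<k}\gamma\log\{(1-\tau_k)/(1-\tau_j)\}\to\gamma$, while three remainder sources appear: the bounded location shift $f(x)$ relative to the diverging scale $h(x)q(\tau_k)$, of order $(n/k)^{-\gamma}$; the second-order tail term of ${\rm A3}^\prime$, of order $(n/k)^{-\delta}$; and the A5 term, of order $(n/k)^{\rho}$. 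Thus the deterministic bias is $O((n/k)^{\max\{-\gamma,-\delta,\rho\}})$ up to logarithmic factors, and the hypothesis $k^{m/(2m+1)}(n/k)^{\max\{-\gamma,-\delta,\rho\}}\to0$ renders it $o(k^{-m/(2m+1)})$, hence negligible beside the estimation bias.

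For the estimation term I would linearise $R_j(x)=\{\tilde{q}_Y(\tau_j|x)-q_Y(\tau_j|x)\}/q_Y(\tau_j|x)+r_j(x)$, bounding the quadratic remainder $r_j(x)$ uniformly in $j$ by the relative-error control of Theorem~\ref{as.dist}; the condition $n^\eta\log(k)/k^{m/(2m+1)}\to0$ is precisely what forces this uniform remainder to be negligible, since over the $k$ levels the most extreme one has effective sample size $(1-\tau_1)n\asymp n^\eta$ and the $\log(k)$ factor comes from the maximal inequality over the levels. The expectation of $R_j(x)$ is then $b_\lambda(\tau_j|x)/q_Y(\tau_j|x)$, of order $\{(1-\tau_j)n\}^{-m/(2m+1)}\asymp(n^\eta+j)^{-m/(2m+1)}$ by Theorem~\ref{biasvariance1} under the stated choice of $\lambda$; averaging over $j<k$ and adding the $R_k(x)$ contribution each produce terms of order $k^{-m/(2m+1)}$, which yields $b(k|x)=O(k^{-m/(2m+1)})$. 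For the stochastic part I would pass to the joint asymptotic linear representation of the family $\{\tilde{q}_Y(\tau_j|x)\}_{j\le k}$ underlying Theorem~\ref{as.dist}, namely $R_j(x)-E[R_j(x)]=n^{-1}\sum_{i=1}^n\psi_i(\tau_j,x)+o_p(\cdot)$ with summands independent across $i$; the estimation-error variance then equals $n^{-1}$ times the variance of $\frac{1}{k-1}\sum_{j<k}\psi_i(\tau_j,x)-\psi_i(\tau_k,x)$, which defines $v(k|x)^2$. A Cauchy--Schwarz bound shows this variance is $O(k^{-2m/(2m+1)})$ irrespective of the cross-level correlations, so $v(k|x)=O(k^{-m/(2m+1)})$, and a Lyapunov central limit theorem applied to the independent sum (using the moment assumption B1) gives the stated normal limit after centring by $\gamma+b(k|x)$ and scaling by $v(k|x)$; the matched orders of $b$ and $v$ give the MSE bound at once.

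Finally, for the pooled estimator $\hat{\gamma}^{C}=n^{-1}\sum_i\hat{\gamma}(x_i)$ I would average the representation above over the design points. Because every $\hat{\gamma}(x_i)$ is built from the same random coefficient vectors $\{\tilde{\vec{b}}(\tau_j)\}$ through $\tilde{q}_Y(\tau_j|x_i)=\vec{B}(x_i)^T\tilde{\vec{b}}(\tau_j)$, the fluctuations are strongly correlated across the $x_i$ and the averaging does not reduce the variance order, which is why the centring is $E[b(k|X)]$ and the scale is $E[v(k|X)]$ rather than an $n$-reduced quantity; the same Lyapunov argument then delivers the second limit and the MSE order is inherited. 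The hard part will be the stochastic step: establishing the joint Bahadur representation uniformly over the $k$ diverging intermediate levels and pinning down the covariance structure of the influence functions $\psi_i(\tau_j,x)$ across $j$, this being the nonparametric counterpart of the exponential-spacing argument in the classical Hill analysis, where here the inter-level dependence is induced by the shared design and the spline penalty rather than by order-statistic spacings.
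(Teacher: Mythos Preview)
Your decomposition into a deterministic EVT piece and an estimation-error piece is exactly what the paper does (its $D_{1n}$ and $D_{2n}$), and your linearisation of $R_j(x)$ and the subsequent averaging to obtain $b(k|x)=O(k^{-m/(2m+1)})$ match the paper's computation line for line. For the deterministic term the paper is terser than you: it simply invokes Theorem~2.3 of Wang et al.\ (2012) to get $D_{1n}=\gamma+O(k^{-1/2})=\gamma+o(k^{-m/(2m+1)})$, whereas you spell out the three second-order contributions of sizes $(n/k)^{-\gamma}$, $(n/k)^{-\delta}$, $(n/k)^{\rho}$; your account explains where the hypothesis $k^{m/(2m+1)}(n/k)^{\max\{-\gamma,-\delta,\rho\}}\to0$ is actually used, which the paper leaves implicit.

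The one substantive difference is in the stochastic term. You propose to build a joint Bahadur representation $R_j(x)-E[R_j(x)]=n^{-1}\sum_i\psi_i(\tau_j,x)+o_p(\cdot)$ uniformly in $j\le k$, compute the covariance of the resulting linear combination, and then run a fresh Lyapunov CLT; you rightly flag the uniformity as the hard step. The paper avoids this entirely. From the proof of Theorem~\ref{as.dist} it extracts, for every intermediate level $\tau_j$, the representation
\[
\frac{\tilde{q}_Y(\tau_j|x)-q_Y(\tau_j|x)}{q_Y(\tau_j|x)}=\frac{b_\lambda(\tau_j|x)+\vec{\nu}(\tau_j|x)^T\vec{W}}{q_Y(\tau_j|x)}(1+o_P(1)),
\]
with the \emph{same} $(K+p)$-dimensional Gaussian vector $\vec{W}\sim N(\vec{0},I)$ appearing at every $j$ (it is the limit of $W_n(\tau)$, whose asymptotic covariance $G$ does not depend on $\tau$). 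Substituting and summing gives $D_{2n}=b(k|x)+\vec{v}(k|x)^T\vec{W}+o_P(\cdot)$ directly, so asymptotic normality is immediate---no new CLT is needed, and the variance is read off as $v(k|x)^2=\vec{v}(k|x)^T\vec{v}(k|x)$ rather than assembled from inter-level covariances of influence functions. This also makes the pooled case trivial: averaging over $x_i$ just replaces $b(k|x)$ and $\vec{v}(k|x)$ by their empirical means, still multiplying the same $\vec{W}$, which is why the variance does not shrink. Your route would reach the same destination, but the paper's shortcut through a common $\vec{W}$ is what dissolves the ``hard part'' you anticipated.
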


Using Theorem 3, we obtain the asymptotic normality of the ratio of $\hat{q}_Y(\tau_E|x)$ and $\hat{q}^{C}_Y(\tau_E|x)$.
\begin{theorem}
Suppose that the same conditions as Theorem 3. 
Furthermore, assume that $k^{-m/(2m+1)}\log\{(1-\tau_I)/(1-\tau_E)\}\rightarrow 0$ as $k, n\rightarrow \infty$, $\tau_I,\tau_E\rightarrow 1$.
As $n\rightarrow \infty$, $\tau_E\rightarrow 1$ and $n(1-\tau_E)\rightarrow c \in[0,\infty),$
\begin{eqnarray*}
\frac{\frac{\hat{q}_Y(\tau_E|x)}{q_Y(\tau_E|x)}-1-bias(\tau_E|x)}{s(\tau_E|x)}\stackrel{D}{\longrightarrow } N(0,1)
\end{eqnarray*}
and 
\begin{eqnarray*}
\frac{\frac{\hat{q}^{C}_Y(\tau_E|x)}{q_Y(\tau_E|x)}-1-bias^C(\tau_E|x)}{s^C(\tau_E|x)}\stackrel{D}{\longrightarrow } N(0,1)
\end{eqnarray*}
where $bias(\tau_E|x)$, $s(\tau_E|x)$, $bias^C(\tau_E|x)$ and $s^C(\tau_E|x)$ are defined in (\ref{biasExtrapolation}), (\ref{sdExtrapolation}), (\ref{biasExtrapolationC}) and (\ref{sdExtrapolationC}) of proof of Theorem 4 of Appendix. 
Furthermore, 
$$
E\left[\left\{\frac{\hat{q}_Y(\tau_E|x)}{q_Y(\tau_E|x)}-1\right\}^2\right]=O\left(\max\left\{k^{-\frac{2m}{2m+1}}\log^2\left(\frac{1-\tau_I}{1-\tau_E}\right),\{(1-\tau_I)n\}^{-\frac{2m}{2m+1}}\right\}\right).
$$
and 
$$
E\left[\left\{\frac{\hat{q}^{C}_Y(\tau_E|x)}{q_Y(\tau_E|x)}-1\right\}^2\right]=O\left(\max\left\{k^{-\frac{2m}{2m+1}}\log^2\left(\frac{1-\tau_I}{1-\tau_E}\right),\{(1-\tau_I)n\}^{-\frac{2m}{2m+1}}\right\}\right).
$$
\end{theorem}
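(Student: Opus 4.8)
The plan is to write the studentized ratio as a product of three factors and analyze each separately. Setting $r=(1-\tau_I)/(1-\tau_E)\to\infty$, I decompose
$$
\frac{\hat{q}_Y(\tau_E|x)}{q_Y(\tau_E|x)}=r^{\hat{\gamma}(x)-\gamma}\cdot\frac{\tilde{q}_Y(\tau_I|x)}{q_Y(\tau_I|x)}\cdot\frac{r^\gamma q_Y(\tau_I|x)}{q_Y(\tau_E|x)}=:T_1T_2T_3.
$$
Here $T_3$ is deterministic and measures the error of the Weissman extrapolation, $T_2$ is the relative error of the intermediate-order estimator controlled by Theorem 2, and $T_1$ carries the error of the EVI estimator through Theorem 3.

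First I would dispose of the deterministic factor $T_3$. Writing $q_Y(\tau|x)=U_Y(1/(1-\tau)|x)$ and $t_I=1/(1-\tau_I)$, the second-order expansion of $U_Y(\cdot|x)$ with index $(\gamma,\rho^*,A^*)$ — which holds by Conditions A5 and ${\rm A3}^\prime$ via Lemma 2 of Wang et al.\ (2012) — gives $U_Y(t_I r|x)/U_Y(t_I|x)=r^\gamma\{1+A^*(t_I|x)(r^{\rho^*}-1)/\rho^*+\ldots\}$. Since $\rho^*<0$ and $A^*(t_I|x)=\gamma d^*(x)t_I^{\rho^*}\to0$, this yields $T_3=1+O(t_I^{\rho^*})$, a deterministic contribution to $bias(\tau_E|x)$ whose order is absorbed by the stated rate conditions.

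Next I would linearize $T_1$ and $T_2$. Theorem 3 gives $\hat{\gamma}(x)-\gamma=b(k|x)+v(k|x)\,O_P(1)$ with $b(k|x),v(k|x)=O(k^{-m/(2m+1)})$, so the imposed condition $k^{-m/(2m+1)}\log\{(1-\tau_I)/(1-\tau_E)\}\to0$ forces $(\hat{\gamma}(x)-\gamma)\log r\to0$ in probability; Taylor expansion of the exponential then yields $T_1=1+(\hat{\gamma}(x)-\gamma)\log r+o_P((\hat{\gamma}(x)-\gamma)\log r)$. Theorem 2 gives $T_2=1+b_\lambda(\tau_I|x)/q_Y(\tau_I|x)+(v(\tau_I|x)/q_Y(\tau_I|x))\,O_P(1)$. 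Multiplying the three factors and retaining first-order terms,
$$
\frac{\hat{q}_Y(\tau_E|x)}{q_Y(\tau_E|x)}-1=(\hat{\gamma}(x)-\gamma)\log r+\left(\frac{\tilde{q}_Y(\tau_I|x)}{q_Y(\tau_I|x)}-1\right)+(T_3-1)+\text{(cross terms)}.
$$
The deterministic pieces collect into $bias(\tau_E|x)$ and the two mean-zero stochastic pieces into $s(\tau_E|x)$; the cross terms are of smaller order under the rate assumptions.

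The main work, and the chief obstacle, is establishing that the stochastic part $(\hat{\gamma}(x)-\gamma)\log r+(\tilde{q}_Y(\tau_I|x)/q_Y(\tau_I|x)-1)$ is asymptotically normal after centering and scaling. Since both $\hat{\gamma}(x)$ and $\tilde{q}_Y(\tau_I|x)$ are functionals of the same penalized spline fits at intermediate levels, they are correlated, so I would establish joint asymptotic normality of the pair via a Cram\'er--Wold device together with the Lyapunov CLT (as in Theorems 2 and 3), then read off the limiting variance $s(\tau_E|x)^2$ including the covariance between the two contributions. Whenever one rate dominates, that term alone governs the limit; this dichotomy is exactly what produces the $\max\{\,\cdot,\cdot\,\}$ in the stated MISE, where $k^{-2m/(2m+1)}\log^2 r$ arises from the EVI factor and $\{(1-\tau_I)n\}^{-2m/(2m+1)}$ from the intermediate-order factor. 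The common-index statement follows identically, replacing the marginal result of Theorem 3 for $\hat{\gamma}(x)$ by its averaged version for $\hat{\gamma}^C$ and updating $bias^C(\tau_E|x)$ and $s^C(\tau_E|x)$ accordingly; squaring the representation $bias+s\cdot O_P(1)$ and taking expectations delivers the two MISE bounds.
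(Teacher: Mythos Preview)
Your three-factor decomposition $T_1T_2T_3$, the second-order control of $T_3$, the Taylor linearization of $T_1$, and the use of Theorem~2 for $T_2$ all match the paper's proof exactly. The one place you diverge is in flagging joint asymptotic normality of $(\hat\gamma(x)-\gamma,\ \tilde q_Y(\tau_I|x)/q_Y(\tau_I|x)-1)$ as the ``chief obstacle'' requiring a fresh Cram\'er--Wold/Lyapunov argument. The paper sidesteps this entirely: the proofs of Theorems~2 and~3 already furnish Bahadur-type representations of both quantities as linear forms in the \emph{same} Gaussian vector $\vec W\sim N(\vec 0,I)$, namely $\tilde q_Y(\tau_I|x)/q_Y(\tau_I|x)-1\approx b_\lambda(\tau_I|x)/q_Y(\tau_I|x)+\{\vec\nu(\tau_I|x)/q_Y(\tau_I|x)\}^T\vec W$ and $\hat\gamma(x)-\gamma\approx b(k|x)+\vec v(k|x)^T\vec W$. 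Their weighted sum is therefore automatically normal, with standard deviation $s(\tau_E|x)=\|\log r\,\vec v(k|x)+\vec\nu(\tau_I|x)/q_Y(\tau_I|x)\|$, and no separate joint-CLT step is needed. Your route would work but re-derives what is already available; recognizing the common $\vec W$ is the shortcut. (A minor point: the paper does not fold $T_3-1$ into $bias(\tau_E|x)$ but treats it as an $o(k^{-m/(2m+1)})$ remainder under the condition $k^{m/(2m+1)}(n/k)^{\max\{-\gamma,-\delta,\rho\}}\to0$ from Theorem~3.)
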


For the asymptotic order in Theorem 4, the term $O(k^{-2m/(2m+1)}\log^2\{(1-\tau_I)/(1-\tau_E)\})$ is derived from $\hat{\gamma}(x)$ and the another term is derived from $\tilde{q}_Y(\tau_I|x)$. 
If we use $\tau_I=\tau_1=(n-[n^{\eta}])/(n+1)$ or $\tau_I=O(\tau_1)$, we have $(1-\tau_I)n=O([n^\eta])=o(k)$ since $[n^\eta]/k\rightarrow 0$. 
That is, the asymptotic inference of $\hat{q}_Y(\tau_E|x)$ is dominated by that of $\tilde{q}_Y(\tau_I|x)$ and hence, 
the rate of convergence of the estimator is 
$$
E\left[\left\{\frac{\hat{q}_Y(\tau_E|x)}{q_Y(\tau_E|x)}-1\right\}^2\right]=O\left(\{(1-\tau_I)n\}^{-\frac{2m}{2m+1}}\right).
$$
One may have sense of discomfort with this result since the extreme order quantile estimator and the intermediate order quantile estimator has same rate of convergence. 
Indeed, the leading terms of $\tilde{q}_Y(\tau_I|x)$ and $\hat{q}_Y(\tau_E|x)$ are similar. 
However, the convergence speed of the subsequent term of $\hat{q}_Y(\tau_E|x)$ is obviously slower than that of $\tilde{q}_Y(\tau_I|x)$ because of the influence of $\hat{\gamma}(x)$. 
Therefore, for the application with a finite sample, the behavior of $\tilde{q}_Y(\tau_I|x)$ would be more stable than $\hat{q}_Y(\tau_E|x)$ .
On the other hand, when $\tau_I=\tau_k$ or $\tau_I=O(\tau_k)$, which leads to $n(1-\tau_I)=O(k)$, is adopted, $\hat{q}_Y(\tau_E|x)$ is heavily affected by $\hat{\gamma}(x)$ but not by $\tilde{q}_Y(\tau_I|x)$. 
Actually, since $n(1-\tau_E)\rightarrow c\in [0,\infty)$ and $\log(\{1-\tau_I\}/\{1-\tau_E\})=\log(\{n(1-\tau_I)\}/\{n(1-\tau_E)\})=O(\log[k/\{n(1-\tau_E)\}])$, we have 
\begin{eqnarray*}
E\left[\left\{\frac{\hat{q}_Y(\tau_E|x)}{q_Y(\tau_E|x)}-1\right\}^2\right]&=&O\left(\max\left\{k^{-\frac{2m}{2m+1}}\log^2\left(\frac{k}{n(1-\tau_E)}\right), k^{-\frac{2m}{2m+1}}\right\}\right)\\
&=&O\left(k^{-\frac{2m}{2m+1}}\log^2\left(\frac{k}{n(1-\tau_E)}\right)\right).
\end{eqnarray*}

For the common index quantile estimator $\hat{q}^{C}_Y(\tau_E|x)$ with $\tau_I=O(\tau_k)$, the asymptotic order of $\hat{q}_Y^{C}(\tau_E|x)/q_Y(\tau_E|x)$ are dominated by the term of $\hat{\gamma}^C$, and this do not vary with $x$. 
This result is quite unnatural in the quantile regression although $O(\log^2(k/\{n(1-\tau_E)\}))$, which is the difference between the asymptotic inference of $\hat{\gamma}^C$ and $\tilde{q}_Y(\tau_I|x)$, is quite small. 
Therefore, if the common index quantile estimator is mainly used, we may have to choose the baseline quantile $\tau_I$ so that $\tau_I>\tau_k$. 
Thus, the balance of $\tau_I$ and $k$ controls the asymptotic behavior of $\hat{q}_Y(\tau_E|x)$.
The same is true of $\hat{q}^{C}_Y(\tau_E|x)$. 

Wang et al. (2012) obtained the extrapolated estimator in the linear model with $\tau_I=\tau_k$. 
From their result, the rate of convergence of the MISE of the linear estimator is $E[\{\hat{q}_Y(\tau_E|x)/q_Y(\tau_E|x)-1\}^2]=O(k^{-1}\log^2(\{1-\tau_I\}/\{1-\tau_E\}))$. 
That is, the difference in the rate of convergence between the parametric estimator and the nonparametric estimator is $k^{-1}$ and $k^{-2m/(2m+1)}$, which could be intuitively derived from the classical works on parametric and nonparametric regression.

\vspace{3mm}

\noindent{\bf Remark 3}

The intermediate order quantile and the extreme order quantile are separated mathematically by the rate of the quantile level. 
However, in data analysis, the distinction between these two rates should be drawn for fixed $n$. 
Define $\xi=\xi(\tau,n)=(1-\tau)n$.
Using $\xi$, Chernozhukov and Fern\'andez-val (2011) suggested the following rule of thumb.
For the quantile level $\tau$, if $\xi<30$, it is the extreme order inference, that is, $\tau=\tau_E$ and we should use $\hat{q}_Y(\tau|x)$. 
When $\xi\geq 30$, it is sufficient to use the intermediate order quantile estimator $\tilde{q}_Y(\tau|x)$. 
If the predictor is the continuous, this threshold is $\xi=15$--20. 
However, they noted that the above rule is conservative.  
In this paper, we treat $\tau_1=\frac{n-[n^{\eta}]}{n+1}$ as the intermediate order quantile. 
For example, when $n=200$, $\tau=0.925$ leads to $\xi=15$. 
Then, we have $\eta\approx 0.5$. 
For $n=1000$, $\tau=0.985$ and $\eta=0.4$ correspond to $(1-\tau_1)n\approx (1-\tau)n=15$. 
On the other hand, Wang et al. (2012) suggested to use $\eta=0.1$.
In their rule, we have $\xi(\tau_1,n)=3$ for $n<1025$.
Thus, it seems that the rule of Chernozhukov and Fern\'andez-val (2011) is more conservative rather than that of Wang et al. (2012). 
In our experience, the rule of Wang et al. (2012) worked well for $n\leq 1000$. 
Therefore, in the simulation study of the next section, we also use $\eta=0.1$. 
However, the determination of the split of the intermediate order and the extreme order is still a difficult problem and further study would be welcomed.

\section{Simulation}

The practical performance of the proposed estimator is confirmed by Monte Carlo simulation. 
Define the true regression model as 
\begin{eqnarray}
Y_i=f(X_i)+\sigma(X_i)\varepsilon_i(X_i),\ \ i=1,\ldots,n, \label{simumodel}
\end{eqnarray}
where 
$$
f(x)=\sqrt{x(1-x)}\sin\left(\frac{2\pi(1+2^{-7/5})}{x+2^{-7/5}}\right)
$$
and $\sigma(x)=10^{-1}(1+x)$. 
The predictor $X_i$ is independently generated from the standard uniform distribution. 
This setting was introduced by Daouia et al. (2013). 
We consider two types of error distribution: (a) $\varepsilon_i(X_i)=\varepsilon_i\sim t_{5}$ and (b) $\varepsilon_i(X_i)\sim t_{s(x)}$, where 
$$
s(x)=[\nu(x)]+1,\ \ \ \nu(x)=[\{1.1-0.5\exp[-64(x-0.5)^2]\}\{0.1+\sin(\pi x)\}]^{-1}.
$$
The error type (b) is also used by Daouia et al. (2013). 
For the $t_\nu$ distribution, the EVI is $\gamma=1/\nu$ and hence, $\gamma=0.2$ and $\gamma(x)=1/s(x)$ for (a) and (b), respectively 
For both cases, the EVI is larger than 0, which indicates that the distribution of $Y_i$ has a heavy tail. 
In (\ref{simumodel}), the conditional $\tau$th quantile of $Y$ given $X=x$ is $q_Y(\tau|x)=f(x)+\sigma(x)q_\varepsilon(\tau|x)$, where $q_\varepsilon(\tau|x)$ is the $\tau$th quantile of $\varepsilon_i(x)$. 
For the case (a), $q_\varepsilon(\tau|x)=q(\tau)$ is the $\tau$th quantile of $t_5$ and is not dependent on $x$. 
Thus, the model (\ref{simumodel}) with (a) is the location-scale shifted model and is of the form of (\ref{truequantile}).  
In the case of (b), $\gamma(x)=1/2$ for $x\in[0.12, 0.88]$ and $\gamma(x)\in(0,1/3)$ otherwise. 
That is, the model (\ref{simumodel}) with (b) has high EVI at the center and low (but larger than 0) EVI at the boundary.
The conditional quantile with (b) fail due to Conditions A. 
However, it is important to confirm the performance of the estimator under (b).

We construct the intermediate order quantile estimator $\tilde{q}_Y(\tau|x)$, the Hill-type estimator $\hat{\gamma}(x), \hat{\gamma}^{C}$, and the extreme order quantile estimator $\hat{q}_Y(\tau|x)$ and $\hat{q}_Y^{C}(\tau|x)$. 
For the intermediate order quantile estimator, we use the number of knots $K=40$ and the smoothing parameter selected via generalized approximated cross-validation (Yuan 2006). 
To obtain $\hat{\gamma}$, $\hat{\gamma}^{C}$, $\hat{q}_Y$ and $\hat{q}_Y^{C}$, we need to determine $\tau_j=1-\frac{[n^\eta]+j}{n+1}$ ($j=1,\ldots,k$). 
In this simulation, $\eta$ is chosen so that $(1-\tau_1)n=\xi=3$ and $k=[7.5n^{1/3}]$. 
Such $\eta$ and $k$ are selected from a pilot study.
Wang et al. (2012) used $\eta=0.1$ and $k=[4.5 n^{1/3}]$ in the linear regression. 
Thus, our $k$ is somewhat larger than that of Wang et al. (2012).

For the estimator $\hat{f}(x)$ of the true function $f(x)$, the Mean Integrated Squared Error (MISE): 
$$
MISE(\hat{f})=\int_0^1 E[\{\hat{f}(x)-f(x)\}^2] dx
$$
is used as the accuracy measure of the estimator. 
We calculate the estimated MISE of $\tilde{q}_Y(\tau|x)$, $\hat{q}_Y(\tau|x)$ and $\hat{q}^{C}_Y(\tau|x)$ over 400 replications. 
The estimators $\tilde{q}_Y(\tau|x)$, $\hat{q}_Y(\tau|x)$ and $\hat{q}^{C}_Y(\tau|x)$ are denoted by PSE-I, PSE-E and PSE-Ep. 
As the competitors, we consider the functional nonparametric estimator (Gardes et al. 2010) and the kernel smoothing estimator (Daouia et al. 2013). 
The estimators $\hat{q}_1(\tau,x)$ and $\hat{q}_2(\tau,x)$ defined in Gardes et al. (2010) are denoted by FNS-I and FNS-E, respectively. 
Furthermore, the estimators $\hat{q}_n(\tau|x)$ and $\tilde{q}^{{\rm RP}}_n(\tau|x)$ defined by Daouia et al. (2013) are labeled by KSE-I and KSE-E in this section. 
Thus, FNS-I, FNS-E, KSE-I and KSE-E are also demonstrated in simulation.

\begin{figure}[h]
\begin{center}
\subfloat[True conditional quantiles]{
\includegraphics[width=65mm,height=35mm]{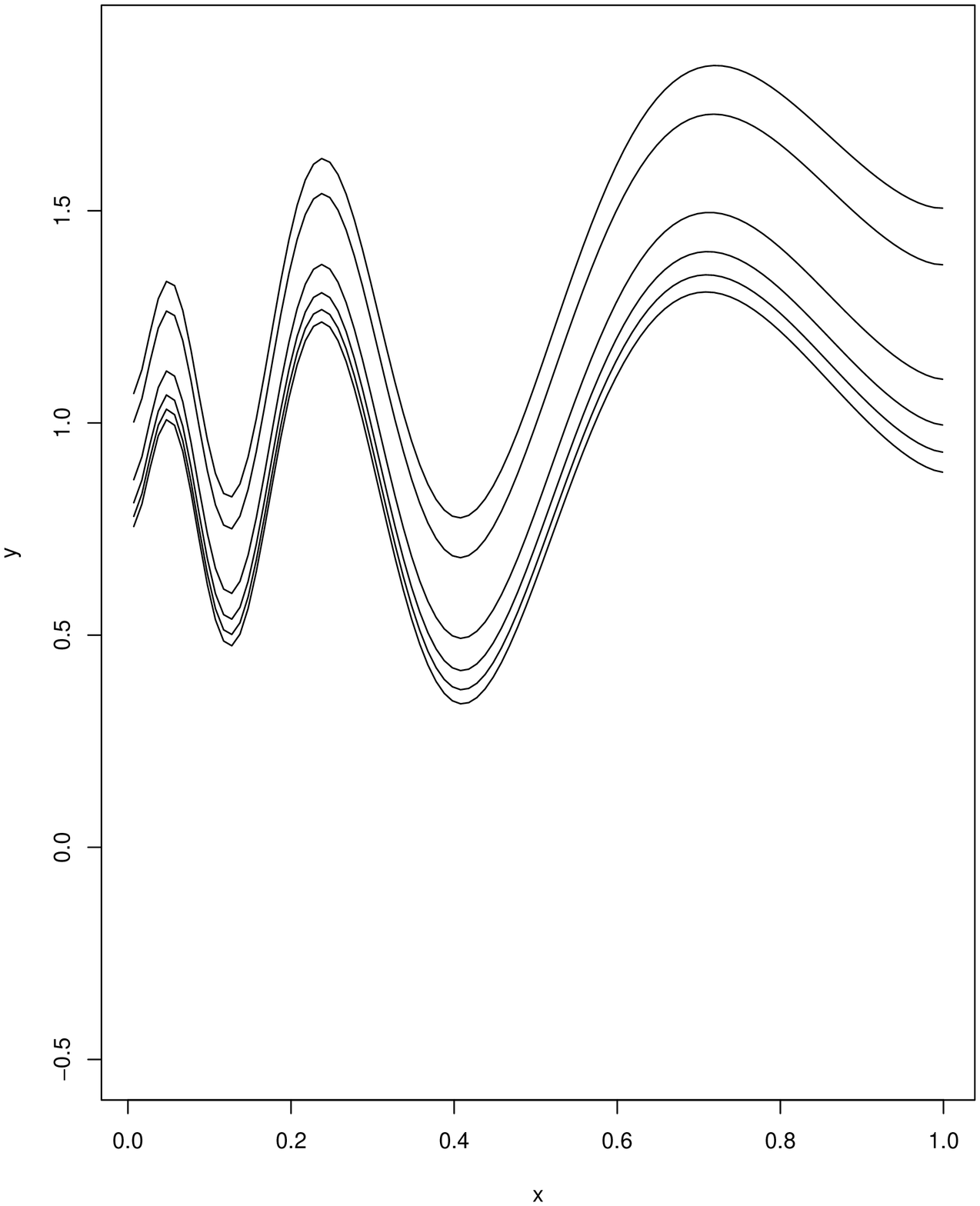}
}
\subfloat[$n=200$]{
\includegraphics[width=65mm,height=35mm]{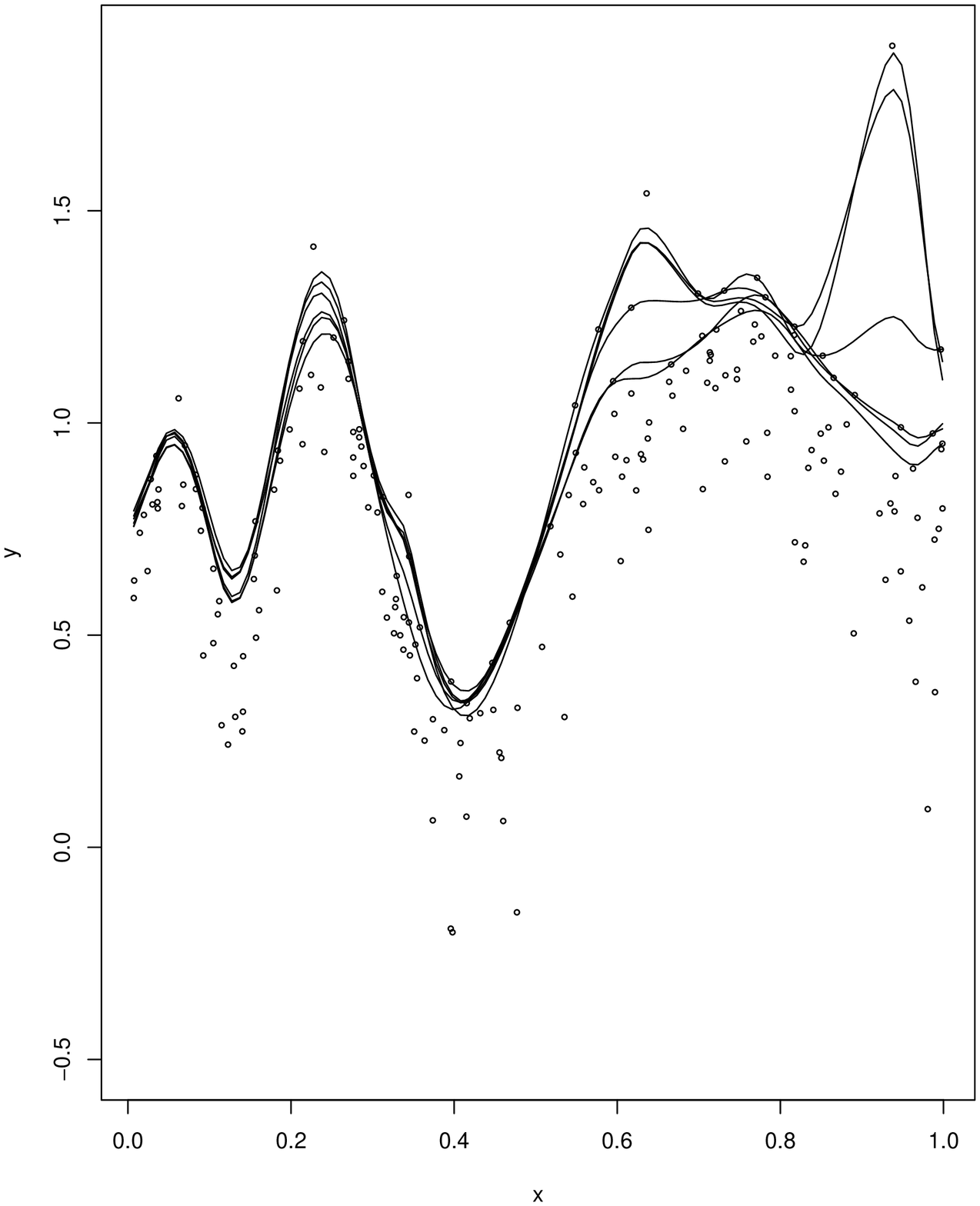}
}\\
\subfloat[$n=600$]{
\includegraphics[width=65mm,height=35mm]{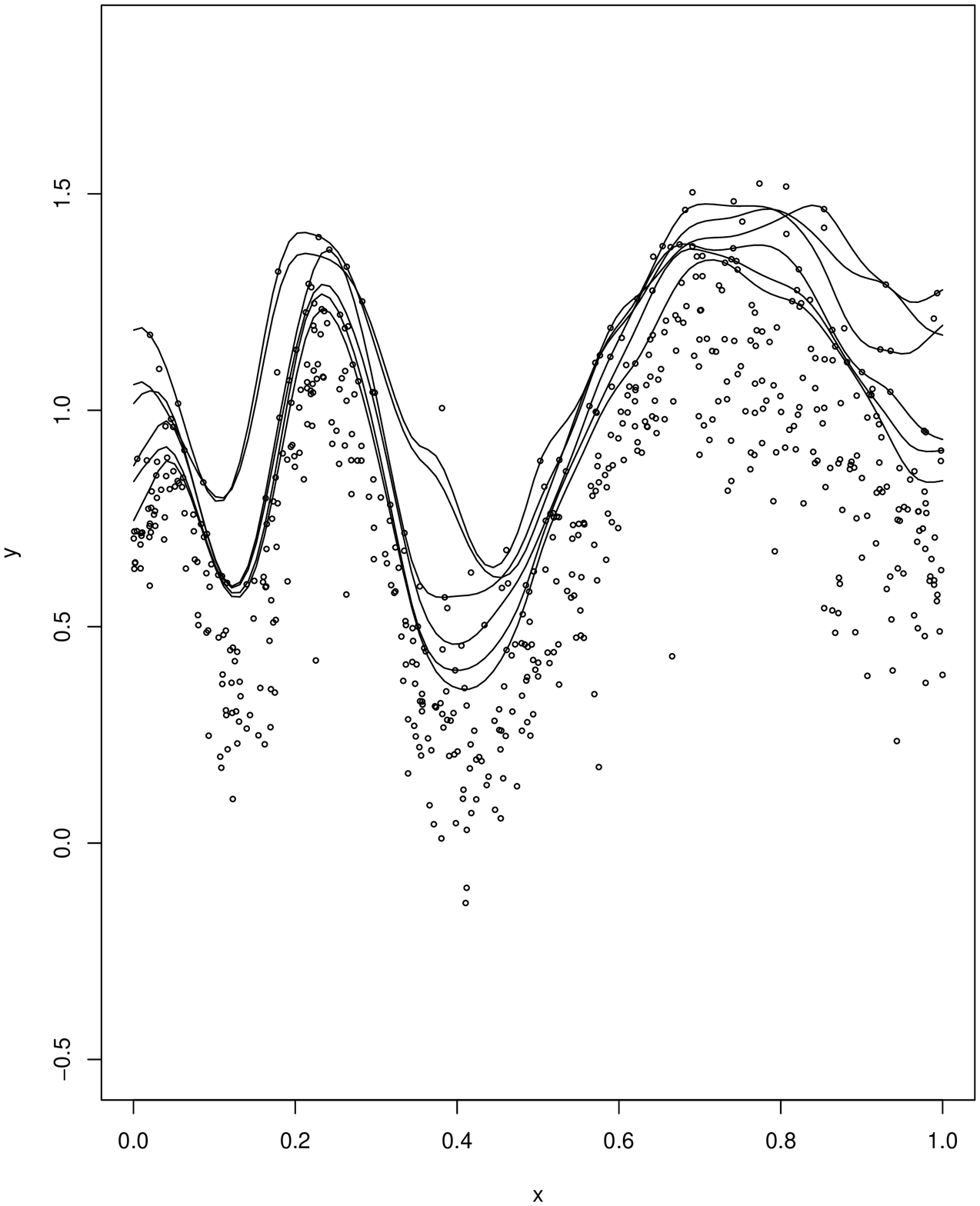}
}
\subfloat[$n=1000$]{
\includegraphics[width=65mm,height=35mm]{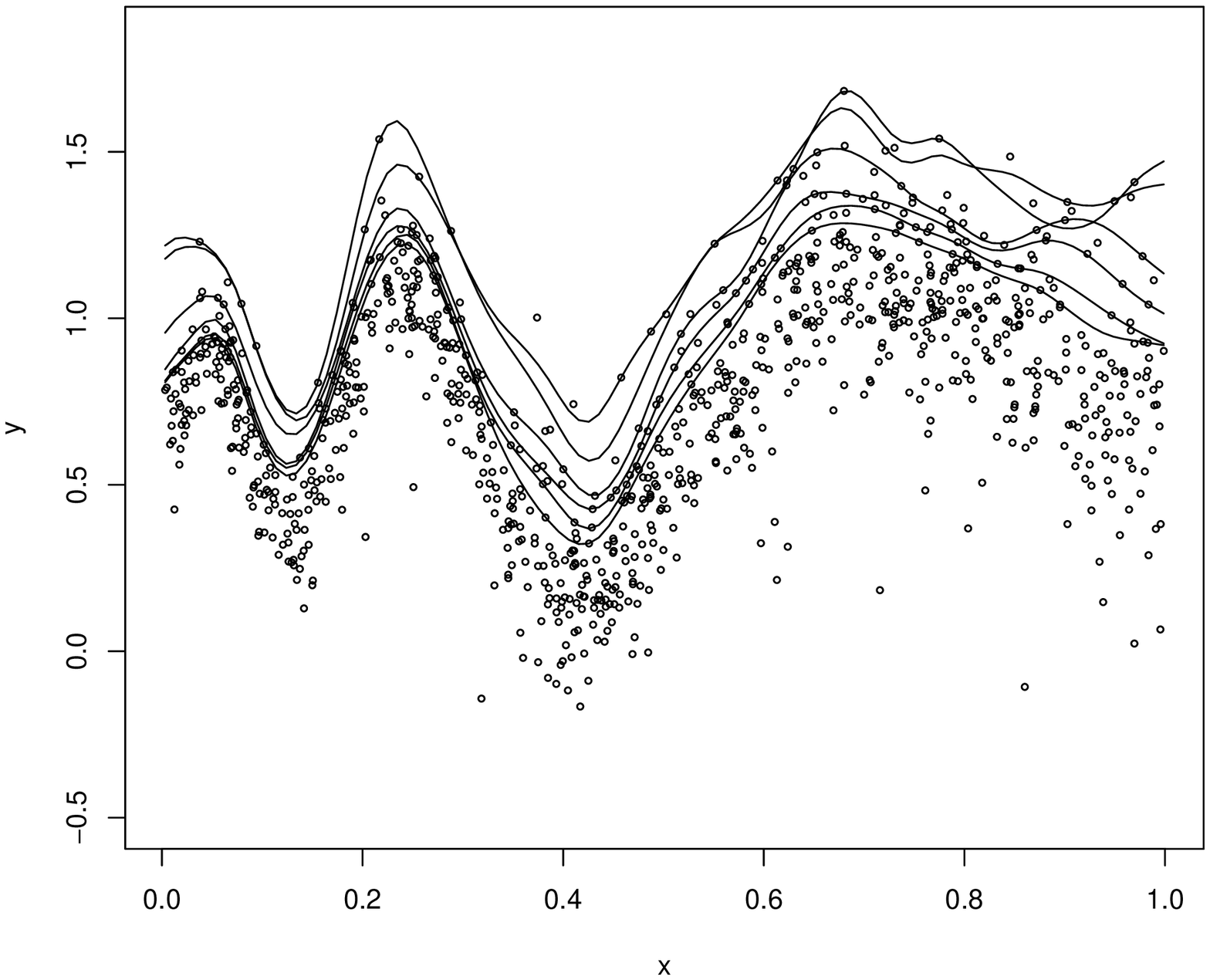}
}
\end{center}
\caption{True conditional quantiles for $\tau=0.8, 0.85, 0.9, 0.95, 0.99$ and 0.995, and these intermediate order quantile estimators for one dataset. \label{fig1} }
\end{figure} 

We report the simulation results for the case (a). 
Figure \ref{fig1} shows the true conditional quantiles and the intermediate order quantile estimators for one dataset with $n=200, 600$ and 1000.
For $\tau=0.8, 0.85$ and 0.9, the estimator behaved well, but for $\tau\geq 0.95$, there was a significant difference between the true function and the intermediate order quantile estimator. 
In Figure \ref{fig2}, the MISEs of the estimators for $\tau\in[0.5,0.995]$ are illustrated. 
We can observe that the proposed estimator behaves better than the competitors. 
From Figure \ref{fig2} (d), we can find that the estimator behaves well as $n$ increases.
This indicates that the estimator has a consistency property. 
However, as $\tau$ increases, the performance of the estimator becomes drastically decreases. 
Therefore, for $\tau\approx 1$, it is difficult to predict the conditional quantile using the intermediate order quantile estimator.

\begin{figure}
\begin{center}
\subfloat[$n=200$]{
\includegraphics[width=75mm,height=50mm]{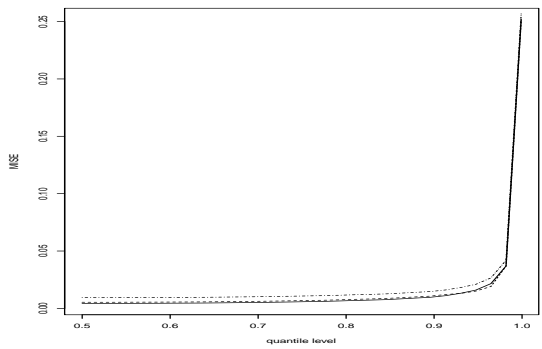}
}
\subfloat[$n=600$]{
\includegraphics[width=75mm,height=50mm]{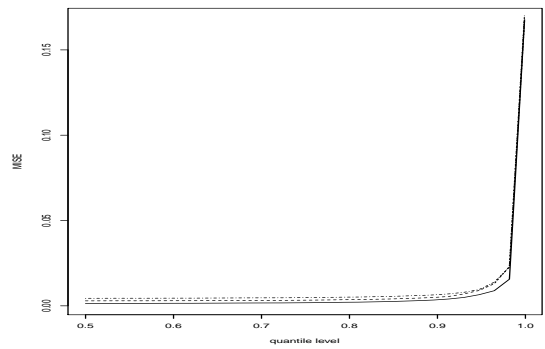}
}\\
\subfloat[$n=1000$]{
\includegraphics[width=75mm,height=50mm]{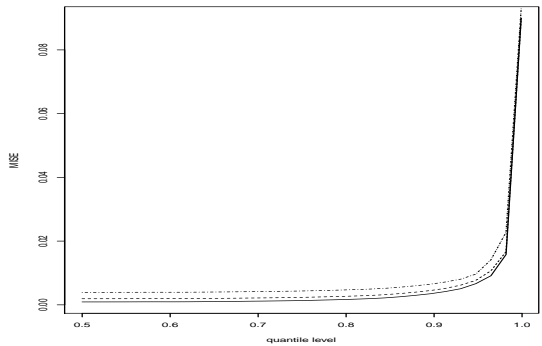}
}
\subfloat[PSE-I]{
\includegraphics[width=75mm,height=50mm]{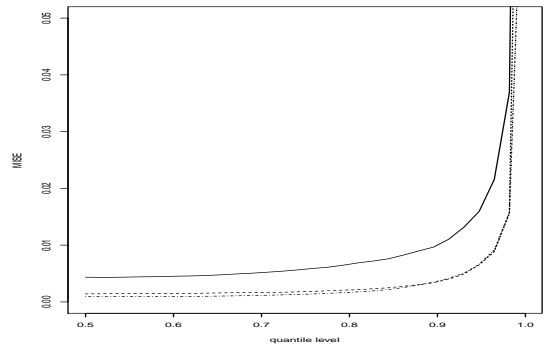}
}
\end{center}
\caption{MISE of the intermediate order quantile estimators for $\tau\in[0.5,0.995]$. In (a), (b) and (c), the solid line is PSE-I. The dashed line and dot-dashed lines are KSE-I and FNS-I, respectively. 
In (d), the solid, dashed and dot-dashed lines are PSE for $n=200$, $n=600$ and $n=1000$. \label{fig2} }
\end{figure} 

We next show the performance of the EVI estimator. 
Figure \ref{fig3} shows the behavior of $\hat{\gamma}^{C}$ over $k$ for one dataset and the distribution of $\hat{\gamma}^{C}$ using $k=[7.5n^{1/3}]$ by Monte Carlo simulation. 
From the results, we see that the suggested $k=[7.5n^{1/3}]$ is good choice. 
When $n=1000$, the behavior of $\hat{\gamma}^{C}$ is stable from (c) and (d). 

\begin{figure}
\begin{center}
\subfloat[$n=200$]{
\includegraphics[width=65mm,height=35mm]{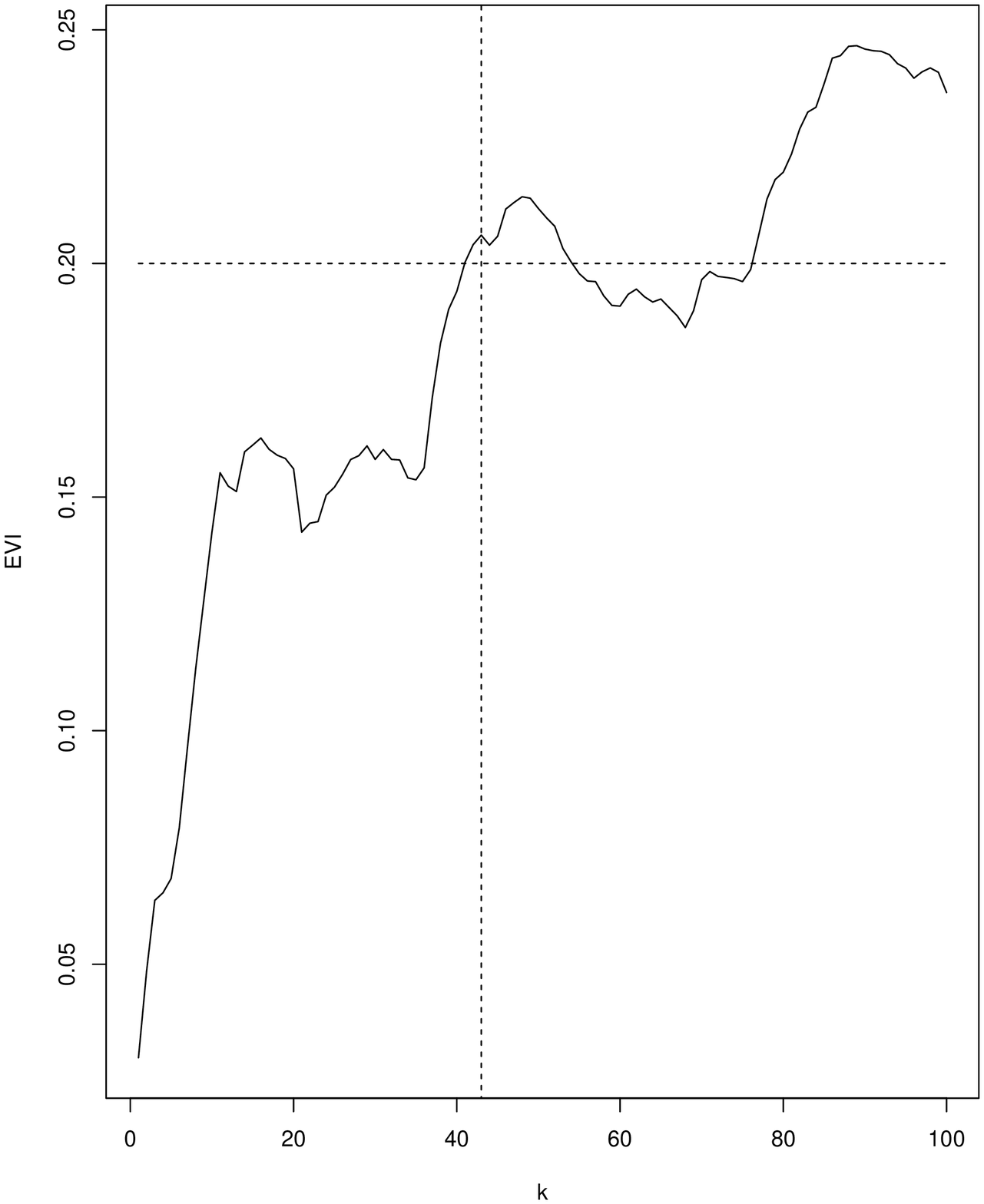}
}
\subfloat[$n=600$]{
\includegraphics[width=65mm,height=35mm]{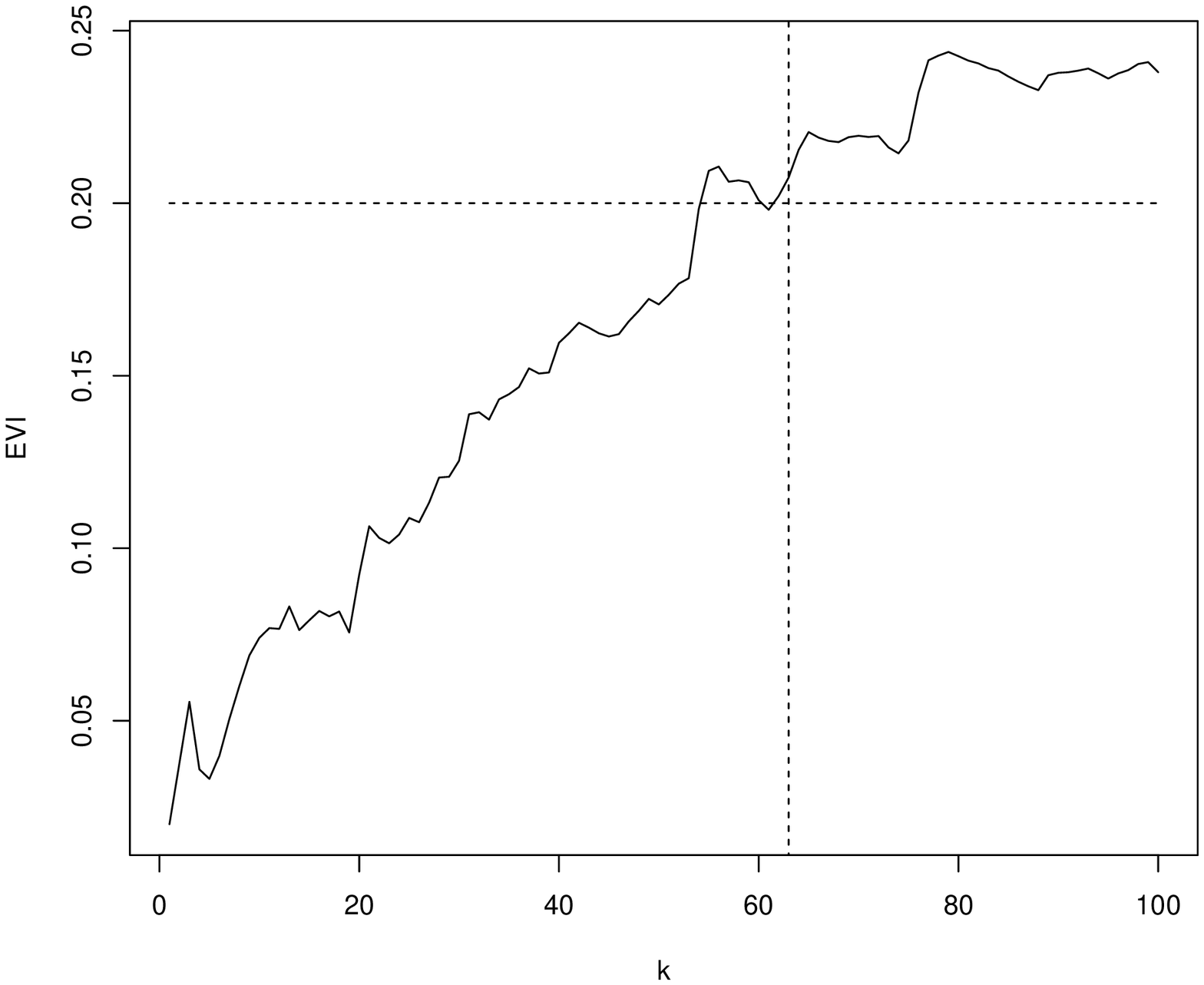}
}\\
\subfloat[$n=1000$]{
\includegraphics[width=65mm,height=35mm]{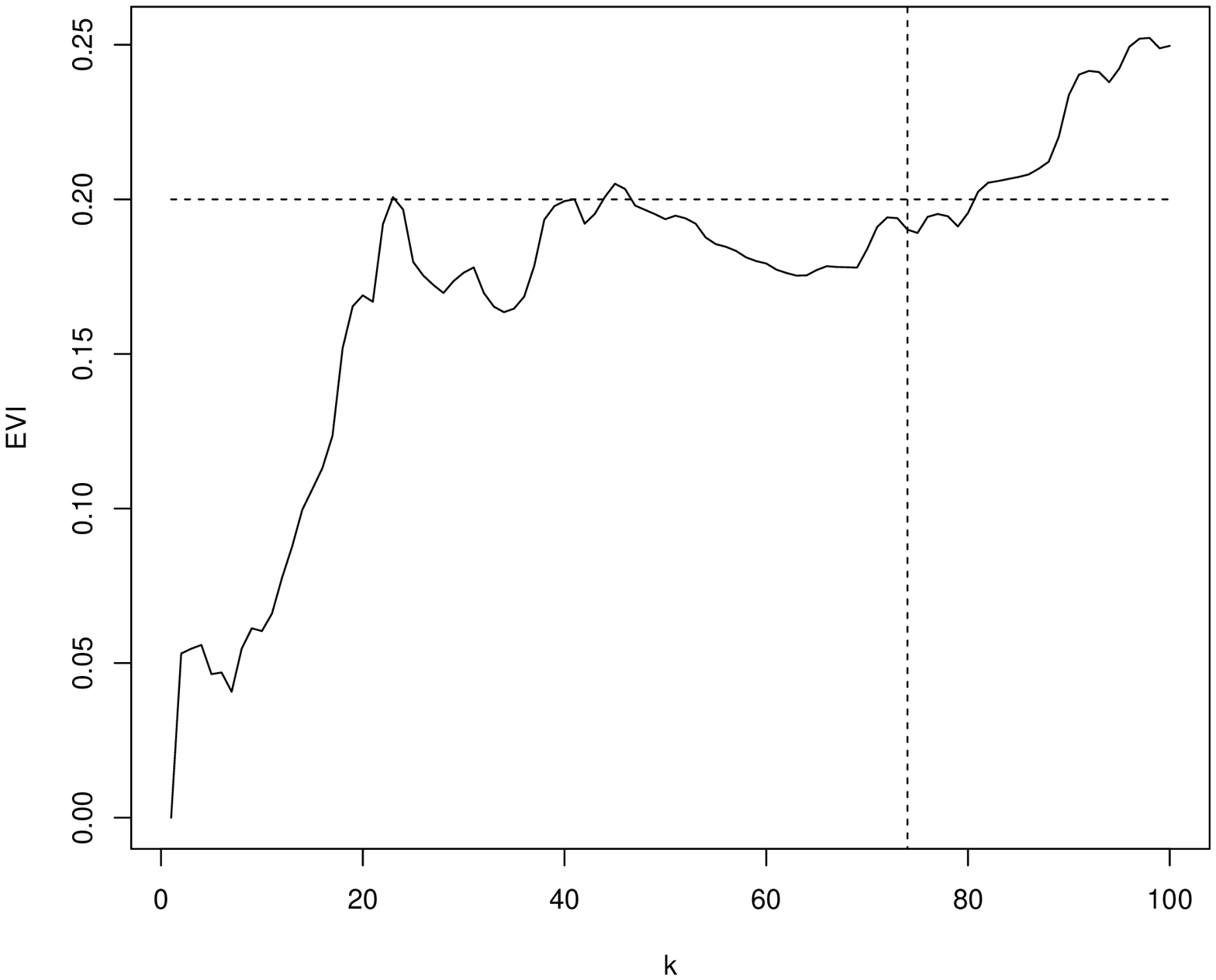}
}
\subfloat[The distribution of $\hat{\gamma}^{C}$]{
\includegraphics[width=65mm,height=35mm]{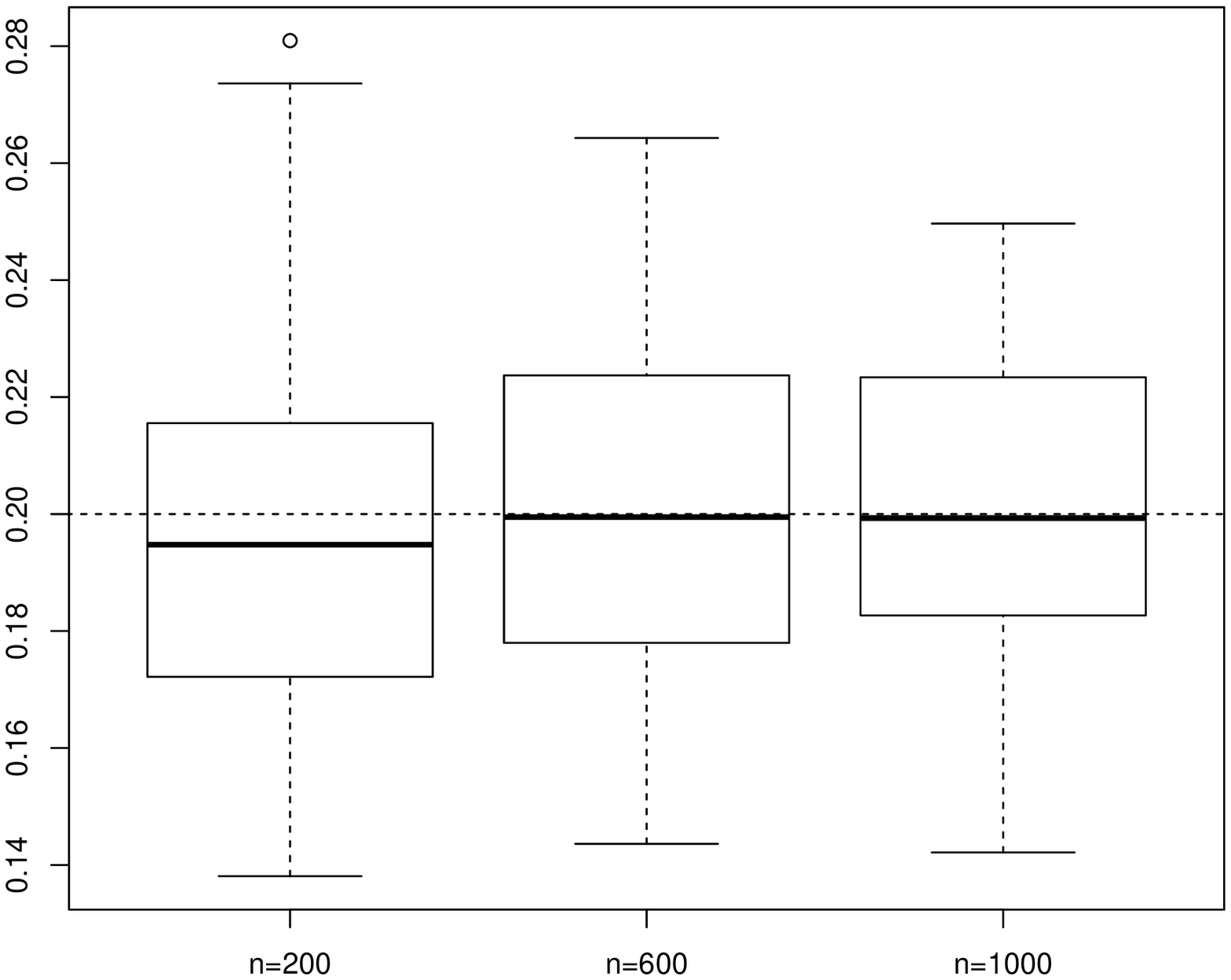}
}
\end{center}
\caption{(a--c): The sample path of the common index estimator of EVI with $k$ for one dataset. 
The dataset is similar to Figure \ref{fig1} for each $n$. 
The dashed line is $k=[7.5 n^{1/3}]$ and $\gamma=0.2$. 
(d) Box plot of $\hat{\gamma}^{C}$ with $k=[7.5 n^{1/3}]$ from 400 replications. \label{fig3} }
\end{figure}

\begin{figure}
\begin{center}
\subfloat[$n=200$]{
\includegraphics[width=50mm,height=45mm]{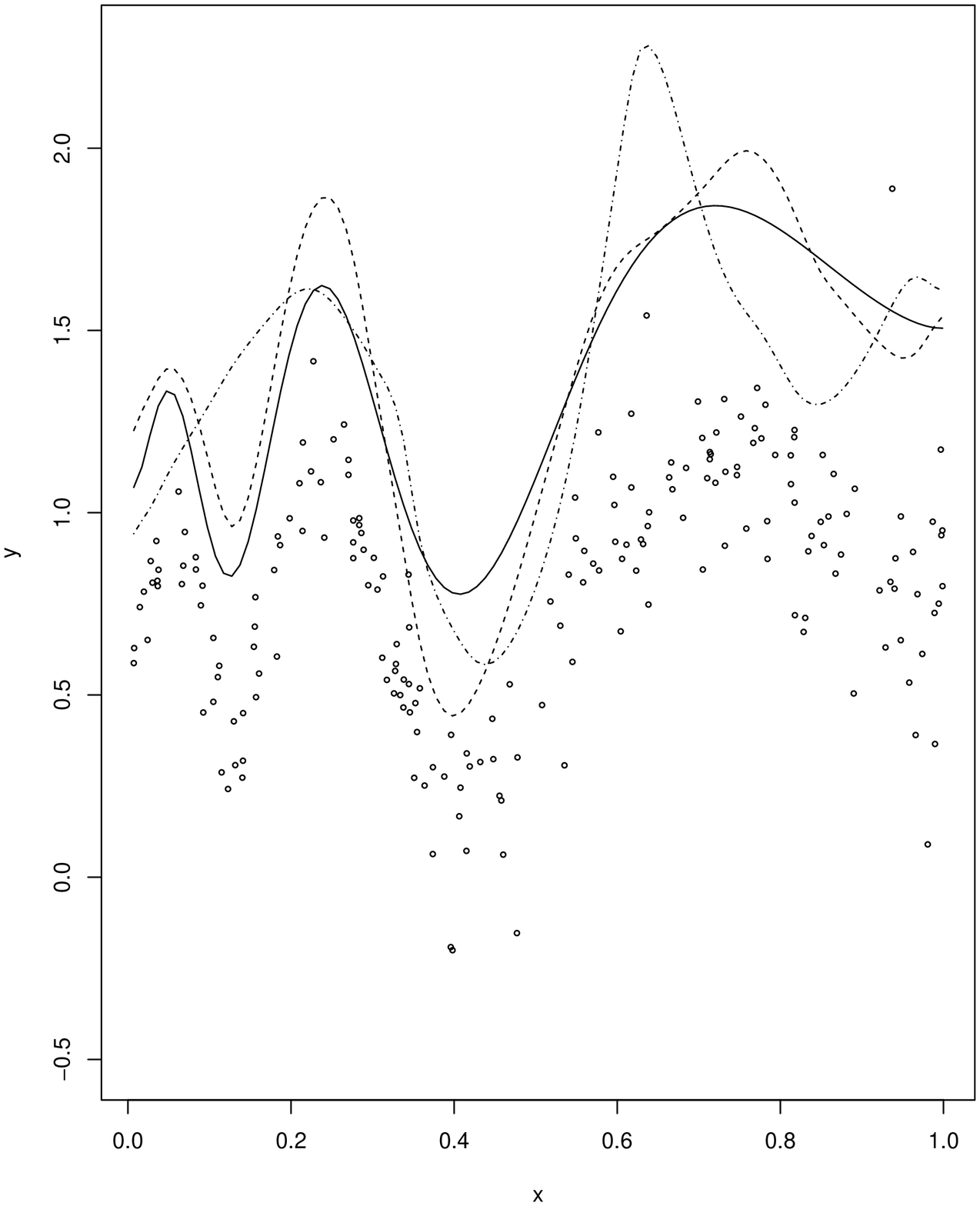}
}
\subfloat[$n=600$]{
\includegraphics[width=50mm,height=45mm]{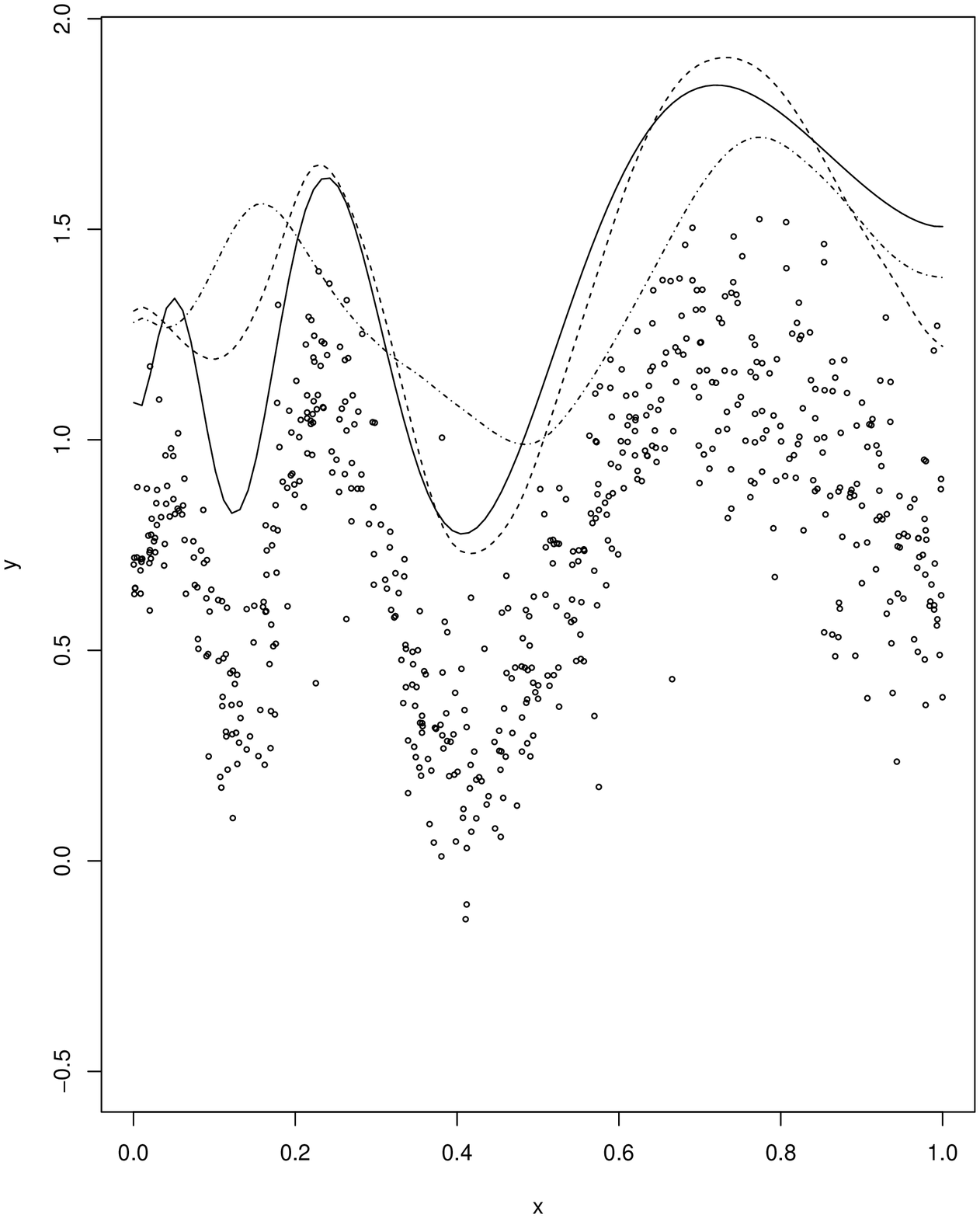}
}
\subfloat[$n=1000$]{
\includegraphics[width=50mm,height=48mm]{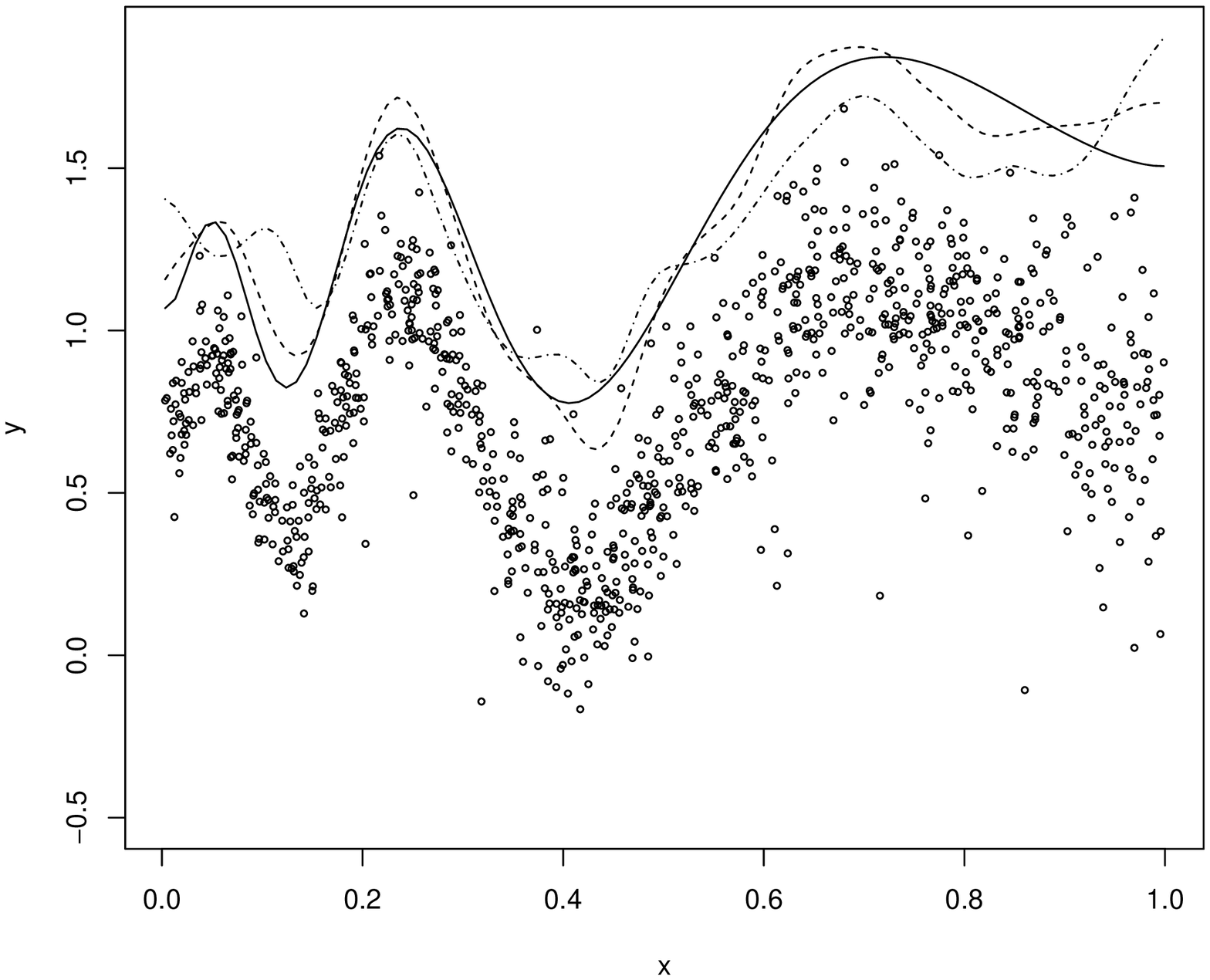}
}
\\
\subfloat[MISE, $n=200$]{
\includegraphics[width=50mm,height=45mm]{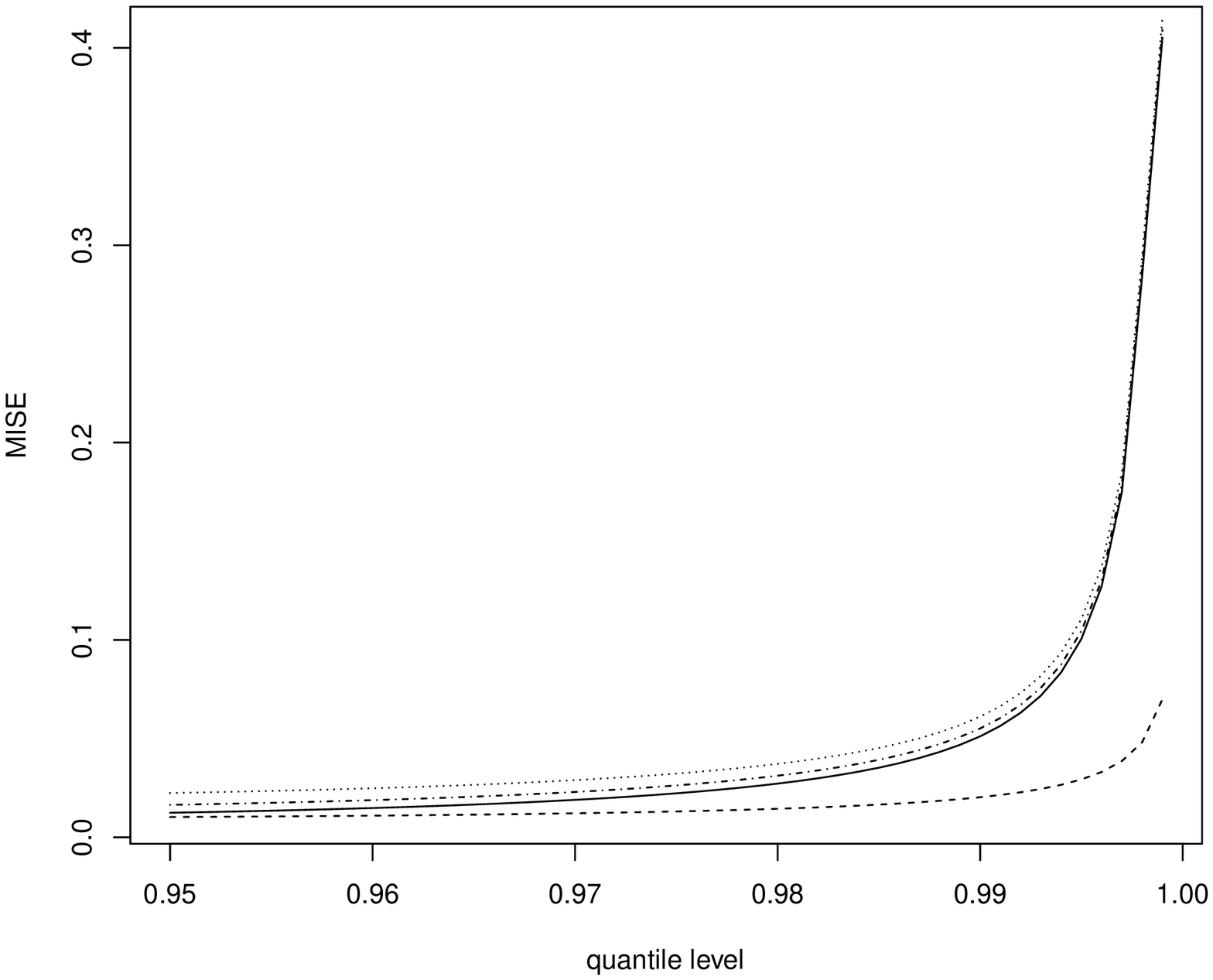}
}
\subfloat[MISE, $n=600$]{
\includegraphics[width=50mm,height=45mm]{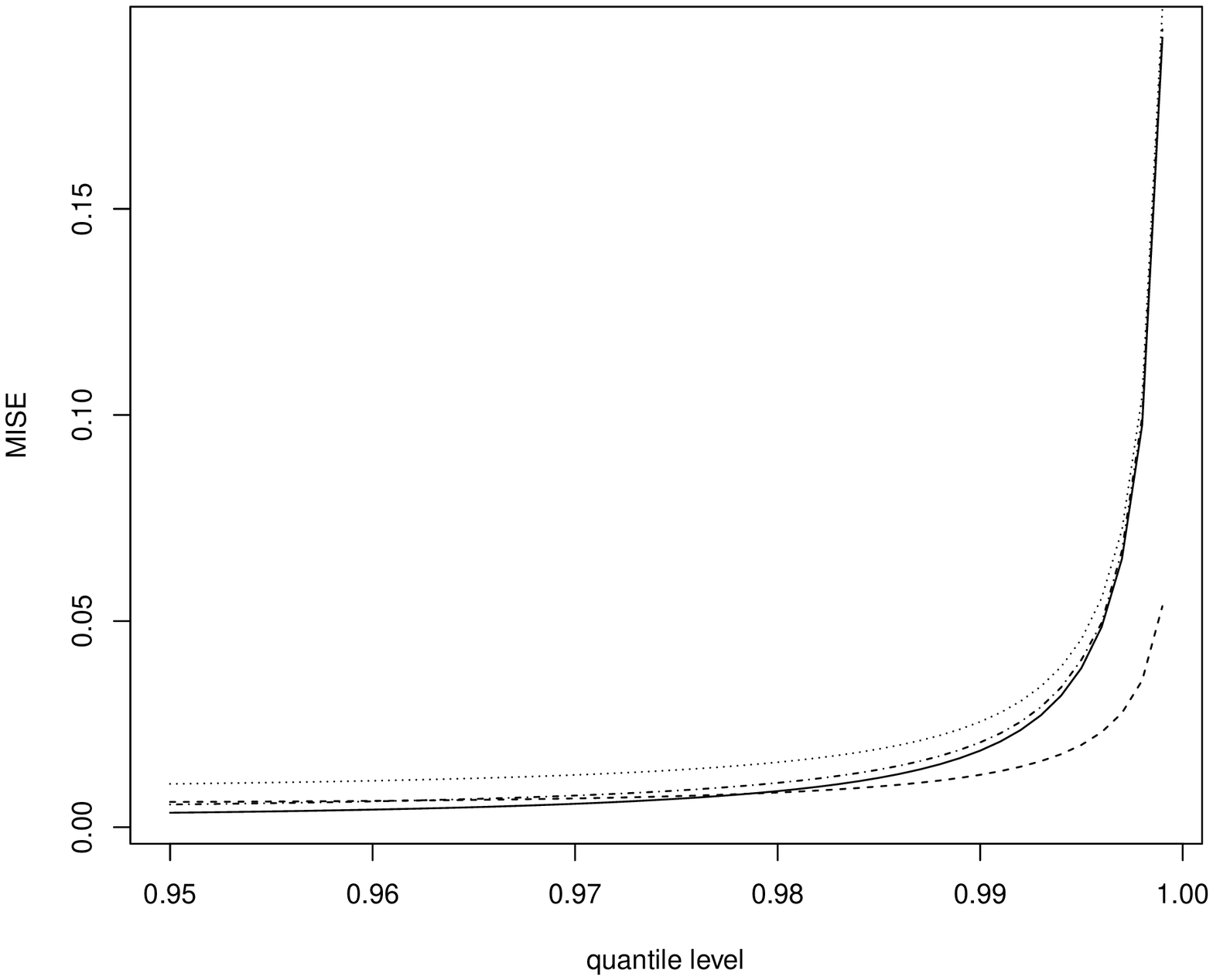}
}
\subfloat[MISE, $n=1000$]{
\includegraphics[width=50mm,height=45mm]{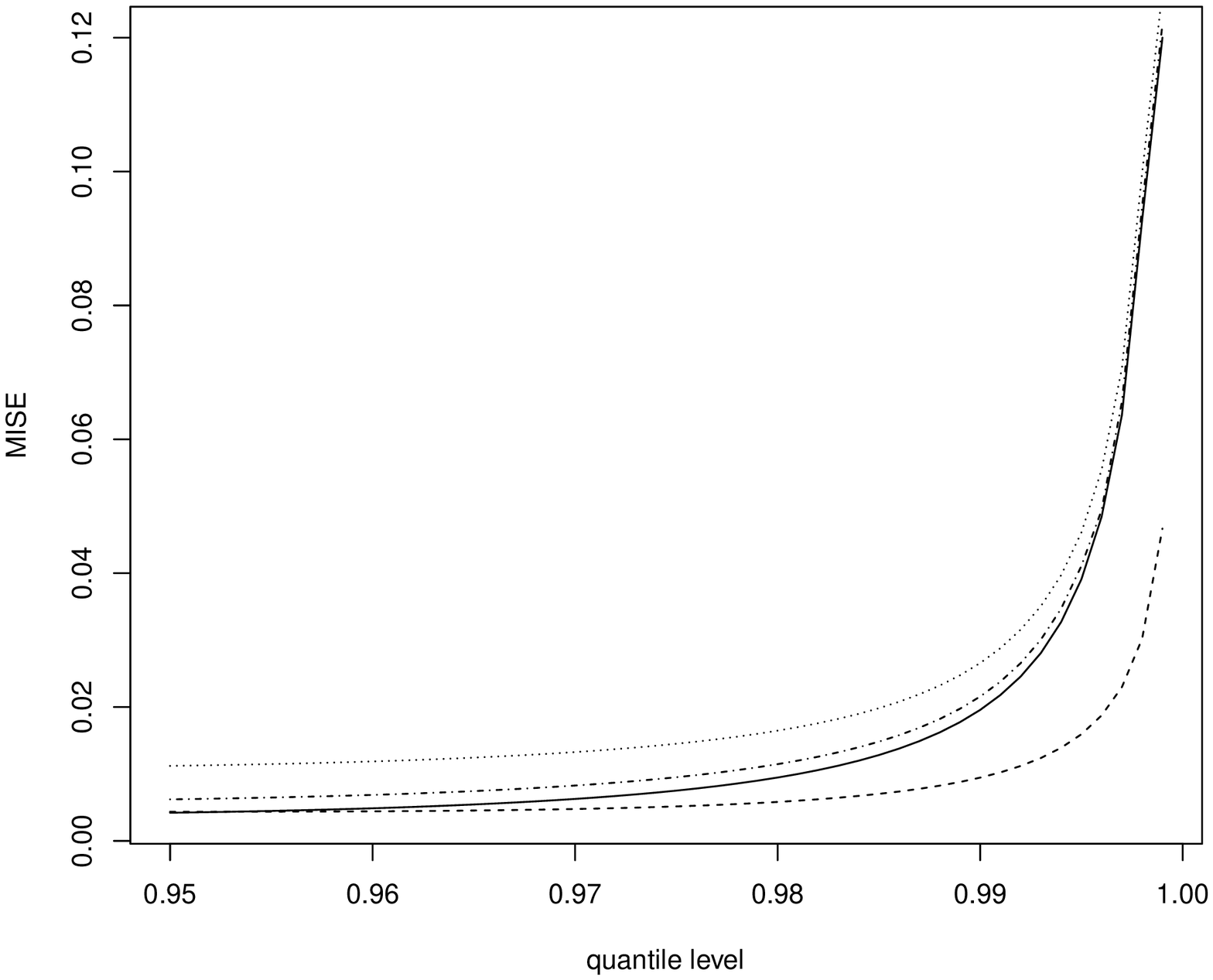}
}
\end{center}
\caption{(a--c): The true conditional quantiles (solid) and the extreme order quantile estimator PSE-E (dot-dashed) and PSE-Ep (dashed) for $\tau=0.995$ for one dataset. 
The dataset is similar to that given in Figure \ref{fig1} for each $n$. 
(d--f) MISE of the estimators for $\tau\in[0.95, 0.999]$. 
The solid, dashed, dotted and dot-dashed lines are PSE-E, PSE-Ep, FNS-E and KSE-E, respectively. \label{fig4} }
\end{figure} 

Figure \ref{fig4} shows the extreme order quantile estimators PSE-E and PSE-Ep for one dataset and the MISE of the extreme order quantile estimators for $\tau\in[0.95, 0.999]$.
From (a--c), we can observe that the estimator behaves well. 
We can see that the behavior of the PSE-Ep is stable than the PSE-E. 
This is not a surprising result since the estimator of EVI included in PSE-Ep is not dependent on $x$ unlike PSE-E.
It can be recognized from Figure \ref{fig4} (d--f) that the proposed estimator has better behavior than the competitors although the differences are not large. 
Furthermore, the performance of the PSE-Ep was superior to that of PSE-E. 
We think that this is a result of the stability of $\hat{\gamma}^{C}$. 
It can be recognized from Figure \ref{fig5} that the extrapolated estimator has consistency.

\begin{figure}
\begin{center}
\subfloat[MISE, PSE-E]{
\includegraphics[width=65mm,height=50mm]{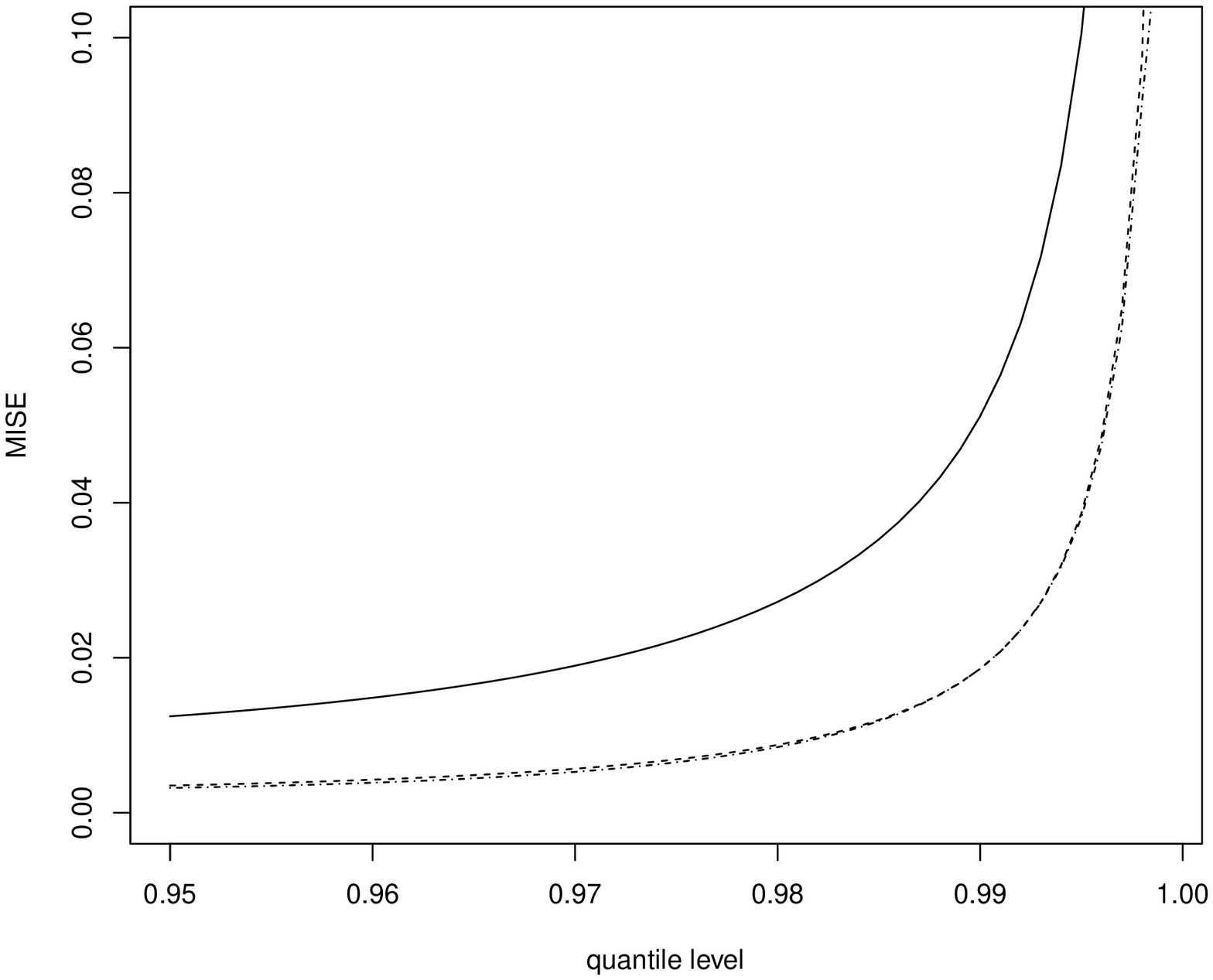}
}
\subfloat[MISE, PSE-Ep]{
\includegraphics[width=65mm,height=50mm]{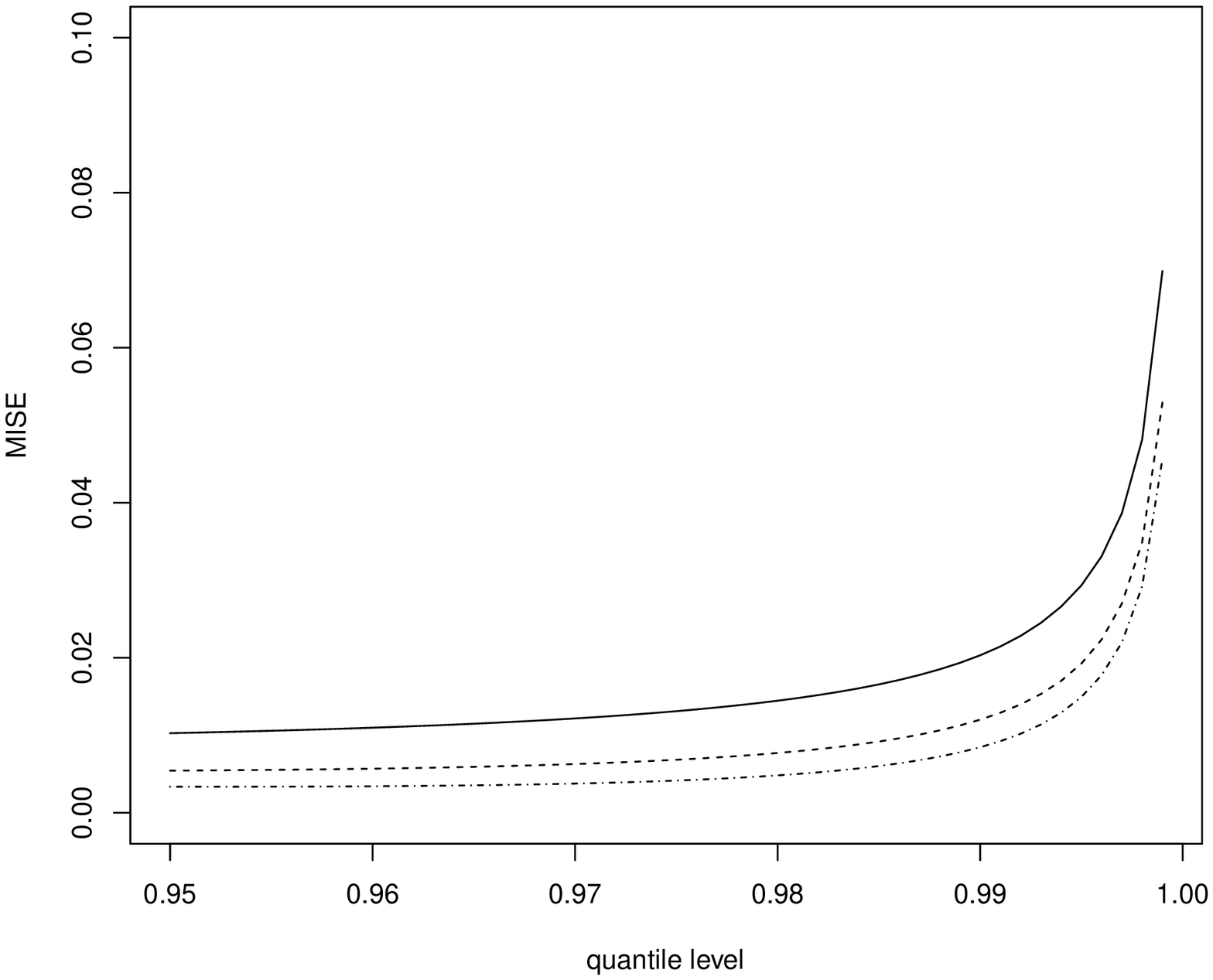}
}
\end{center}
\caption{(a) and (b) are the MISE of the PSE-E and the PSE-Ep, respectively. 
The results are similar to Figure \ref{fig4} (d--f).
For both panels, solid, dashed and dot-dashed lines are for $n=200$, 600 and 1000, respectively. 
\label{fig5}}
\end{figure}

From now on, we describe the simulation results for the model (b).
Figure \ref{fig6} shows the true conditional quantiles and the intermediate order quantile estimators for $\tau\in[0.8,0.995]$ for one dataset. 
It appears that for $\tau\in[0.8,0.9]$, the estimator can capture the true conditional quantile even for $n=200$. 
However, for $\tau\geq 0.95$, the estimator has a wiggly curve. 
In Figure \ref{fig7}, the results of MISE of the intermediate order quantile estimators for each $n$ are illustrated. 
We found that the proposed estimator performs well for $\tau\in[0.5,0.95)$. 
However, the MISE drastically grows as $\tau$ increases. 
The behaviors of PSE-E and PSE-Ep for one dataset are described in Figure \ref{fig8} (a--c).
It can be seen from Figures \ref{fig6} and \ref{fig8} (a--c) that the PSE-E and PSE-Ep performed better than PSE-I. 
Figure \ref{fig8} (d)--(f) shows the MISE of the extreme order quantile estimators. 
It can be confirmed that the performance of PSE-E is slightly better than that of PSE-Ep. 
We see that the proposed estimators have better behavior than the competitors. 
Figure \ref{fig9}, the consistency of the PSE-E and the PSE-Ep can be observed in numerically. 
Although the performance of the proposed estimator is drastically superior to that of Daouia et al. (2013), this simulation result indicates that our method is one of useful tools to the problem of extremal quantile regression.

\begin{figure}[h]
\begin{center}
\subfloat[True conditional quantiles]{
\includegraphics[width=65mm,height=35mm]{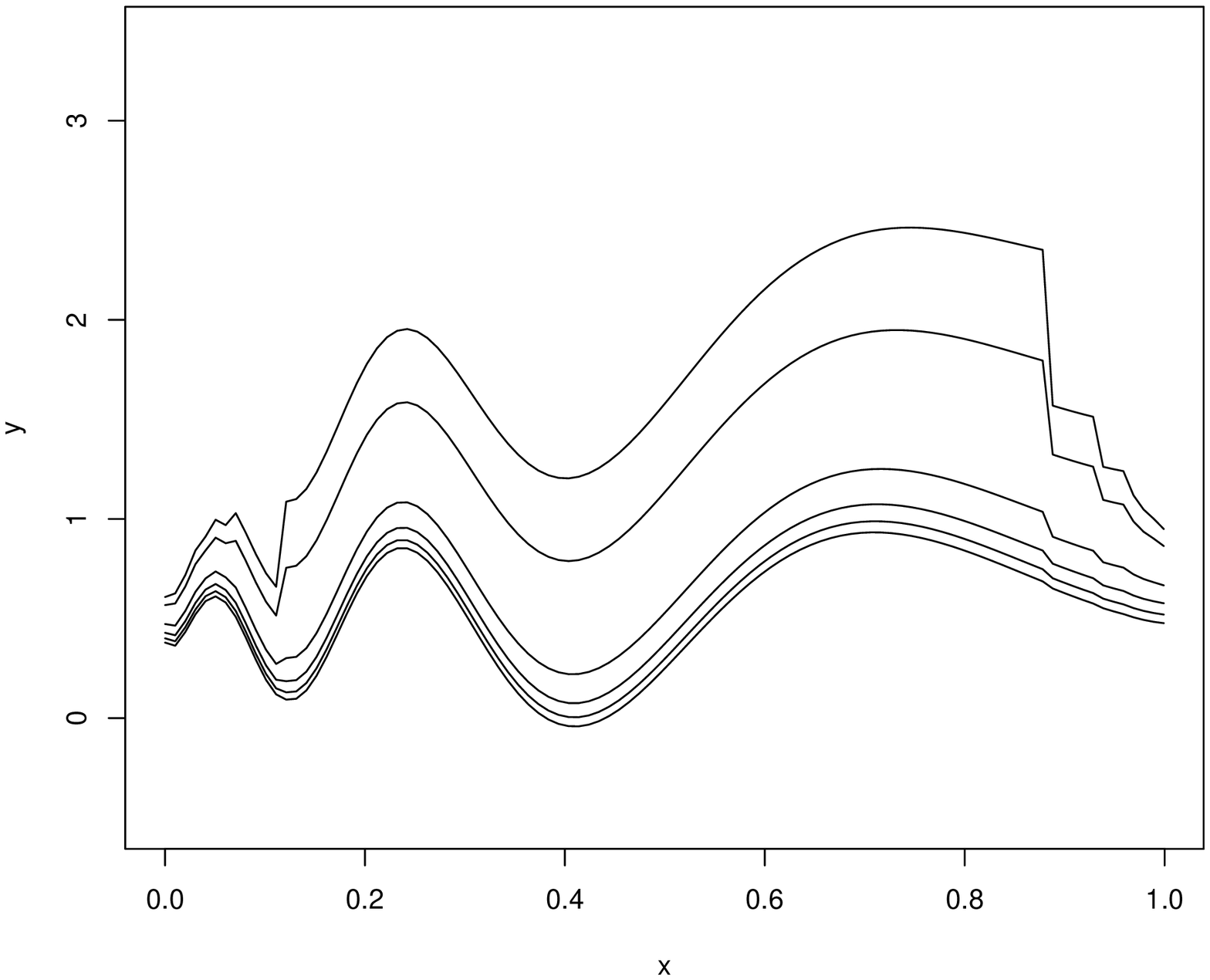}
}
\subfloat[$n=200$]{
\includegraphics[width=65mm,height=35mm]{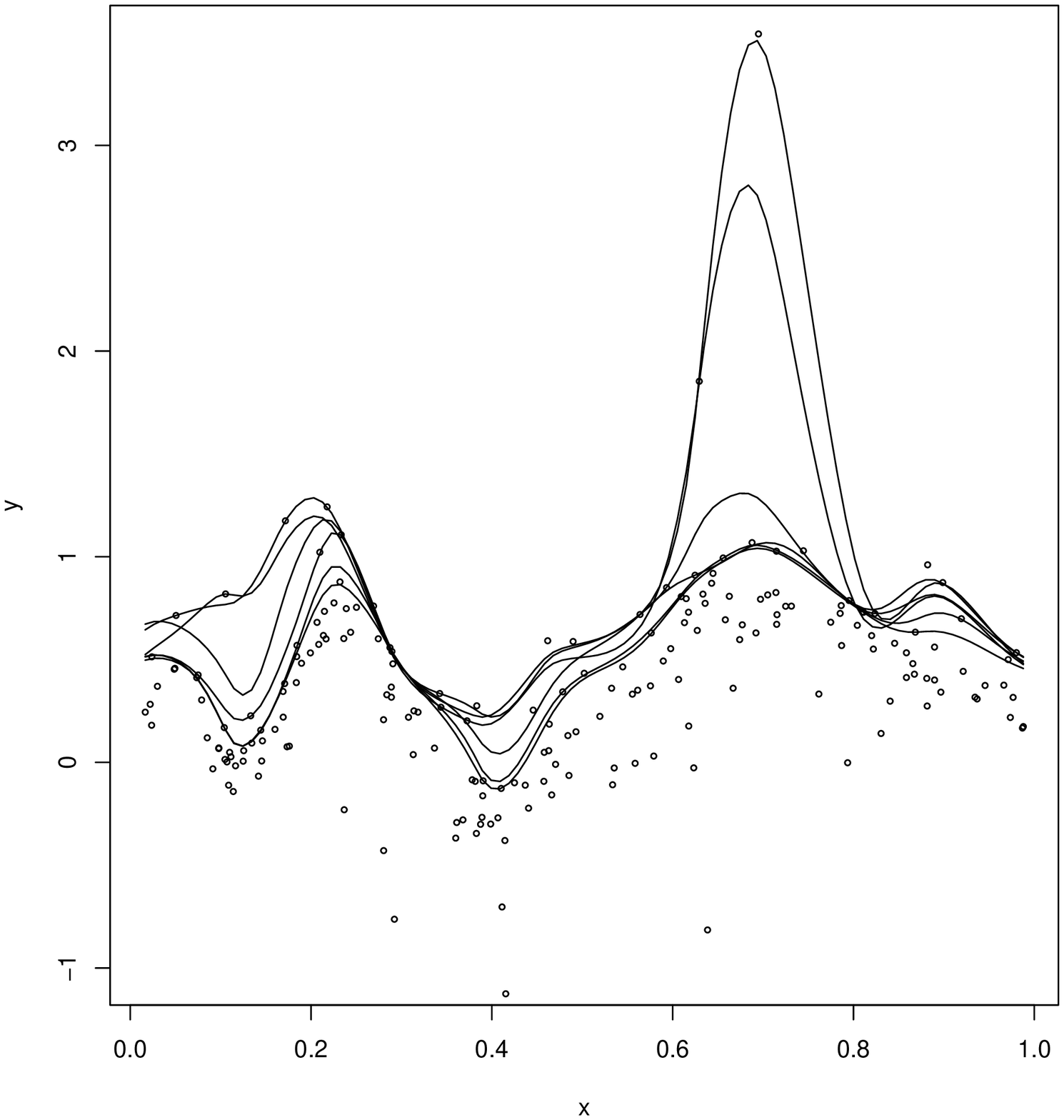}
}\\
\subfloat[$n=600$]{
\includegraphics[width=65mm,height=35mm]{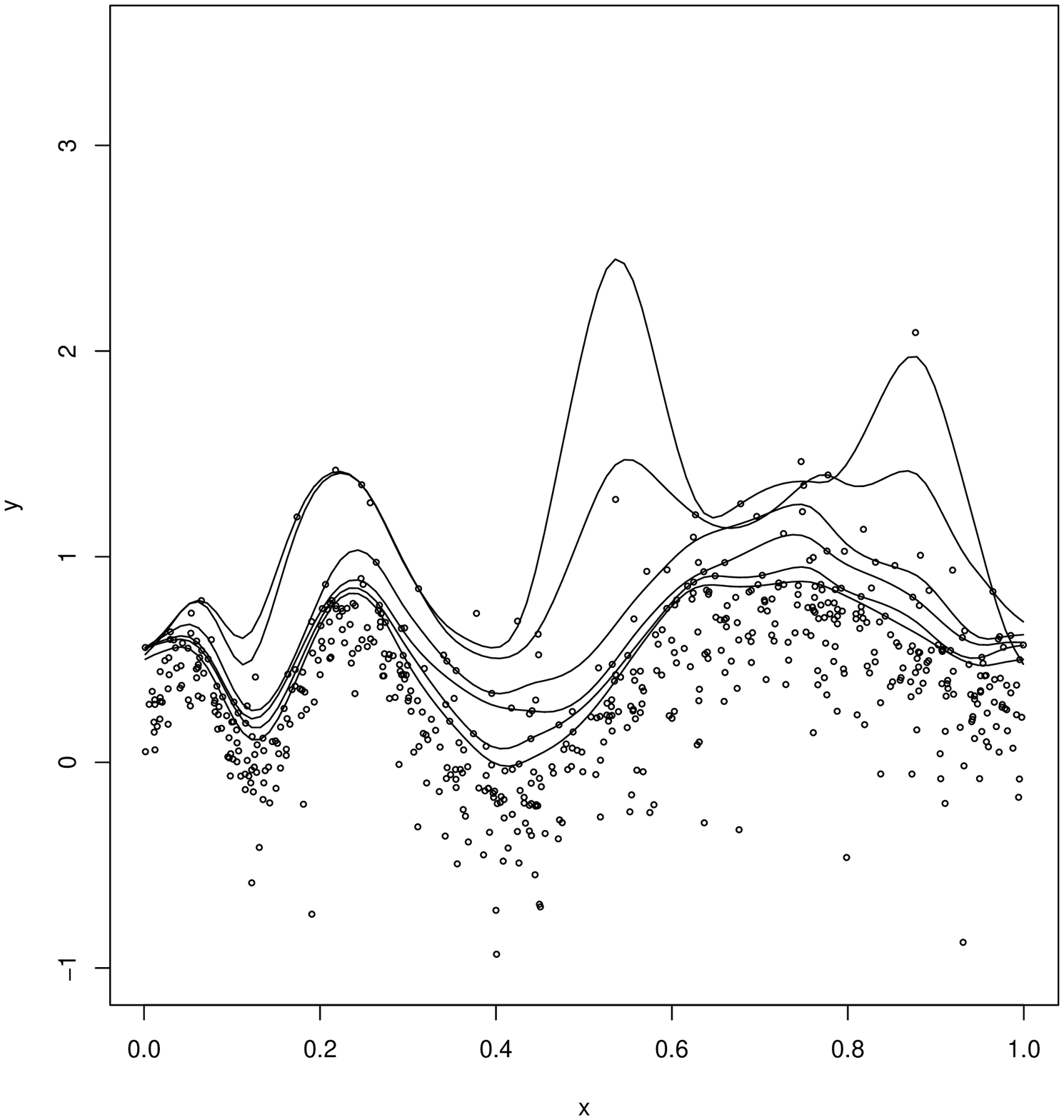}
}
\subfloat[$n=1000$]{
\includegraphics[width=65mm,height=35mm]{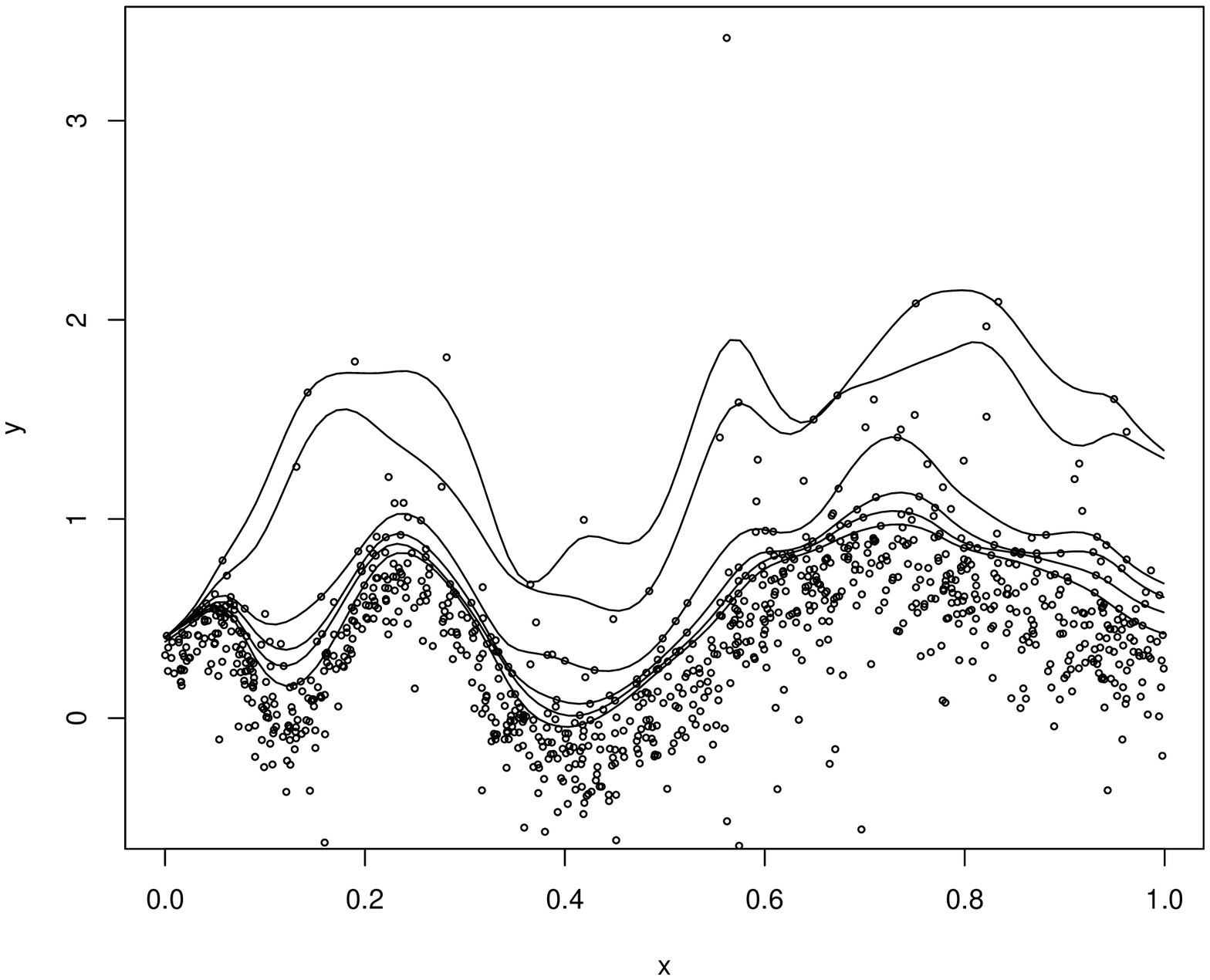}
}
\end{center}
\caption{True conditional quantiles for $\tau=0.8, 0.85, 0.9, 0.95, 0.99, 0.995$ and these intermediate order quantile estimators for one dataset with model (b). \label{fig6} }
\end{figure} 

\begin{figure}[h]
\begin{center}
\subfloat[$n=200$]{
\includegraphics[width=75mm,height=50mm]{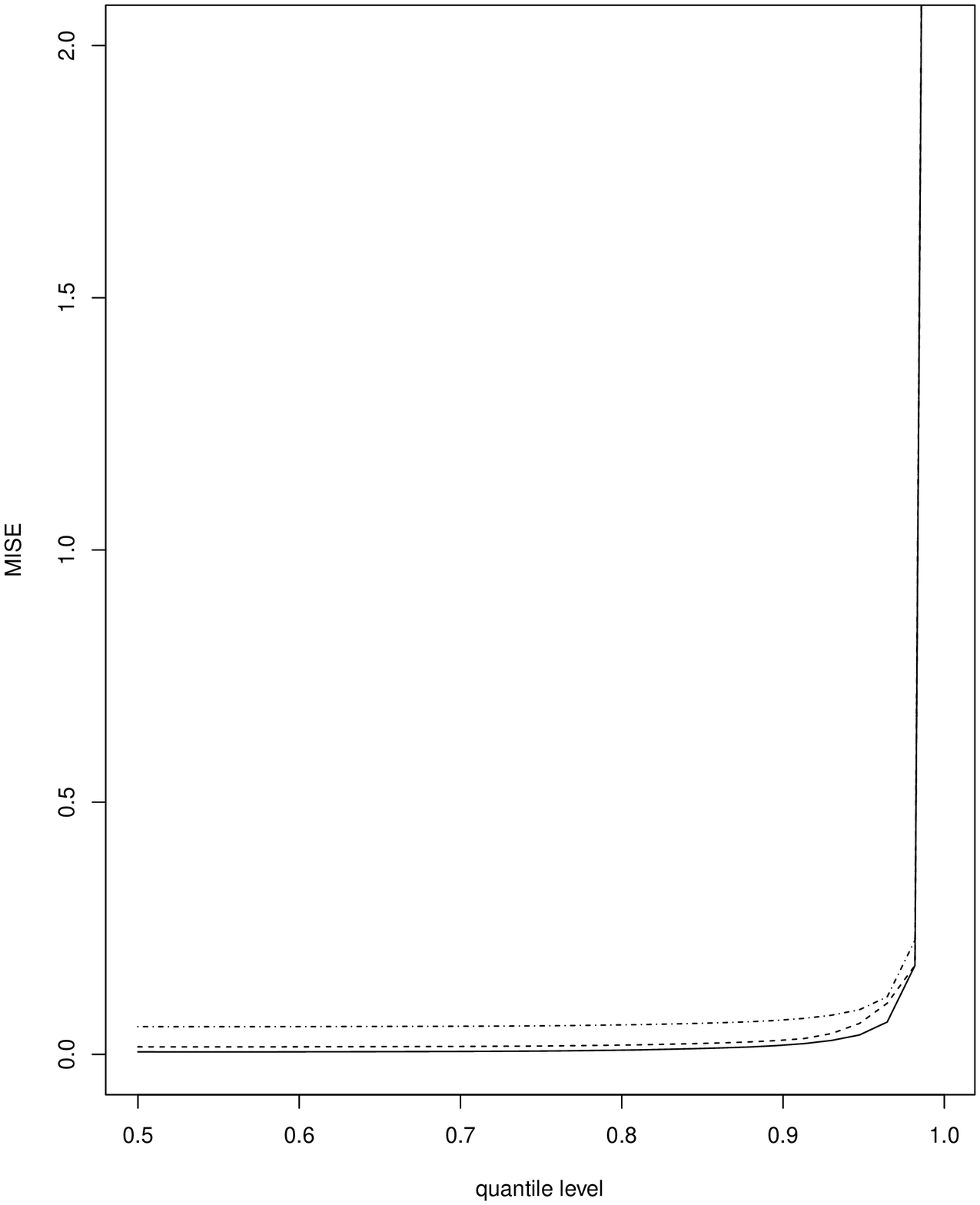}
}
\subfloat[$n=600$]{
\includegraphics[width=75mm,height=50mm]{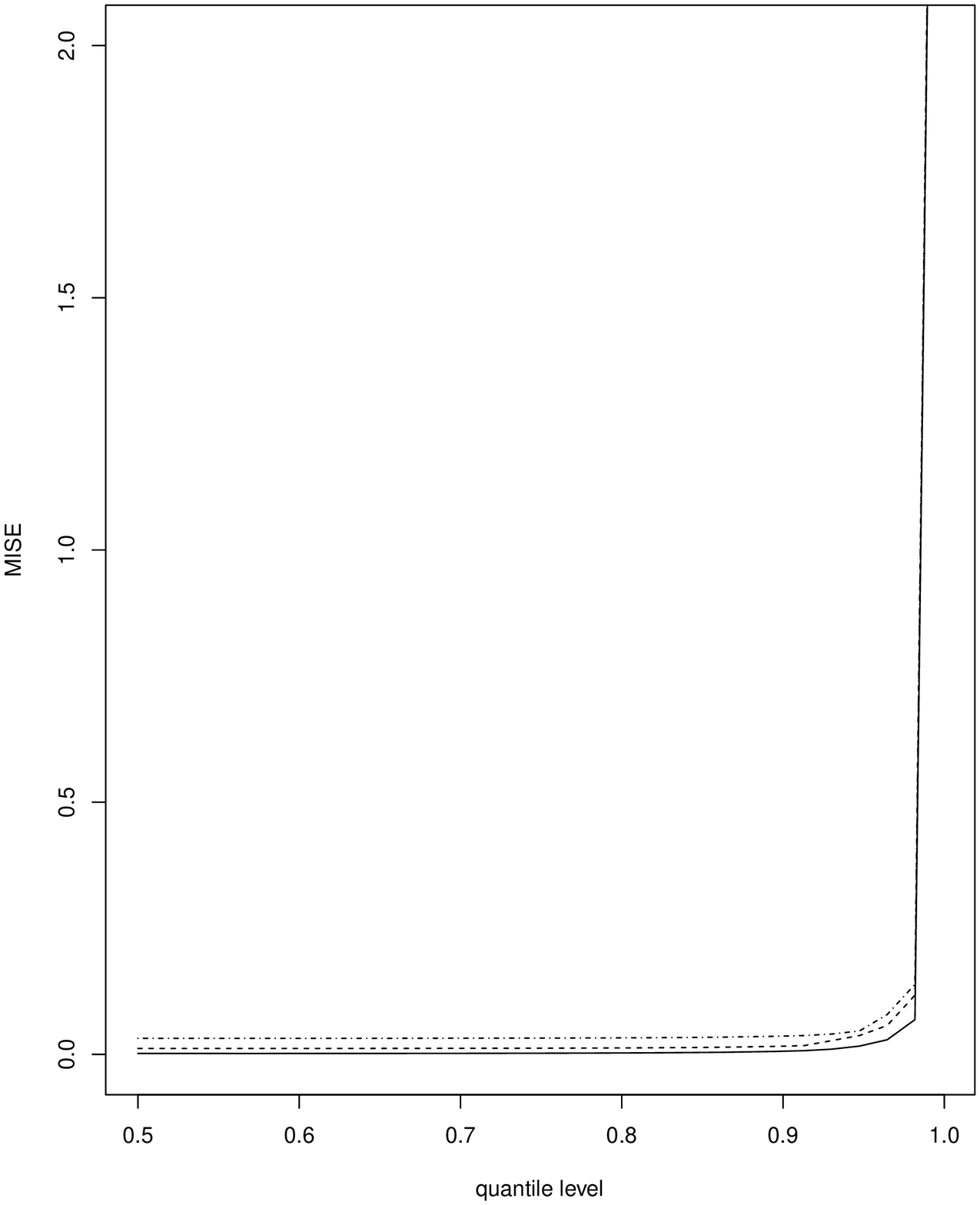}
}\\
\subfloat[$n=1000$]{
\includegraphics[width=75mm,height=50mm]{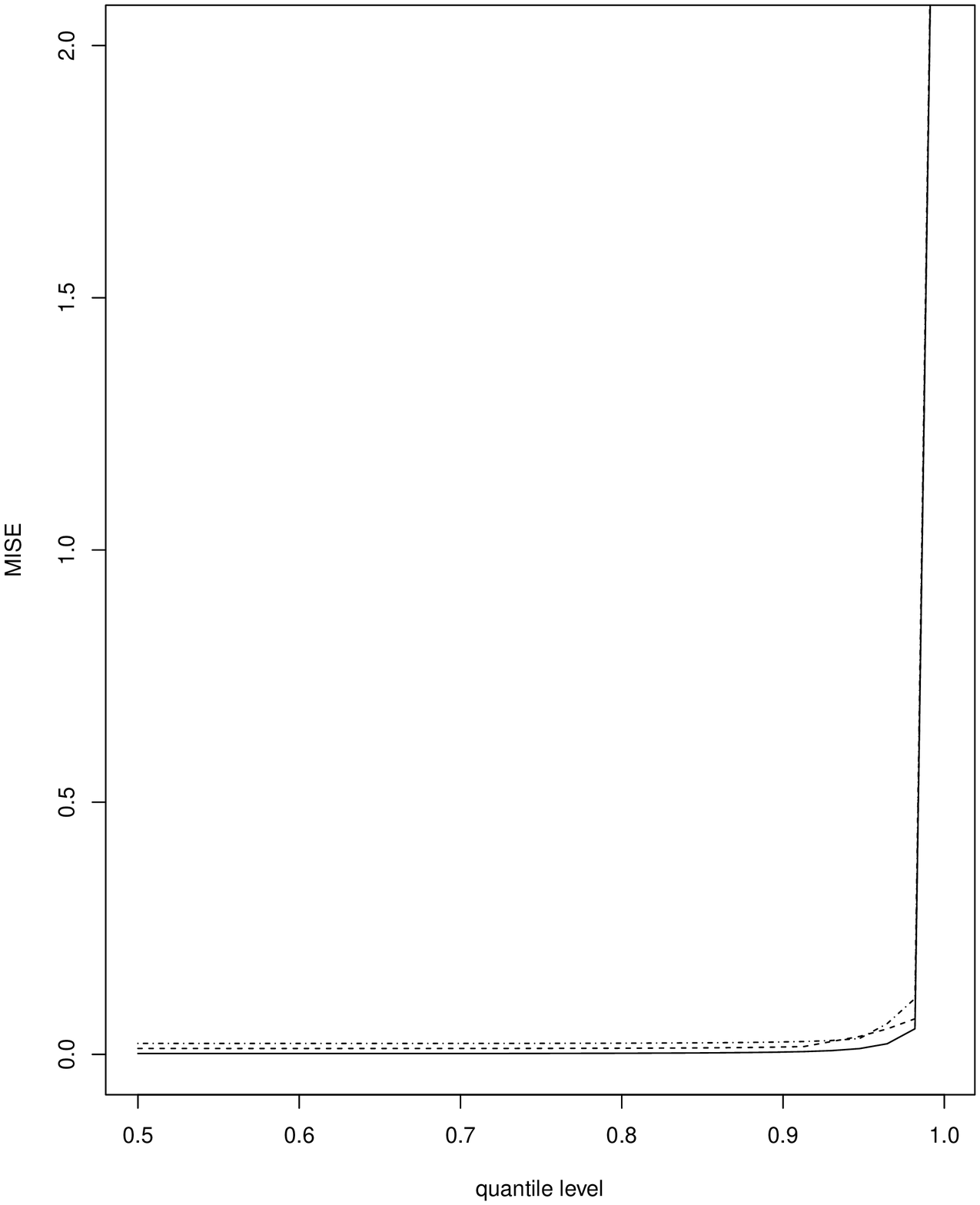}
}
\subfloat[PSE-I]{
\includegraphics[width=75mm,height=50mm]{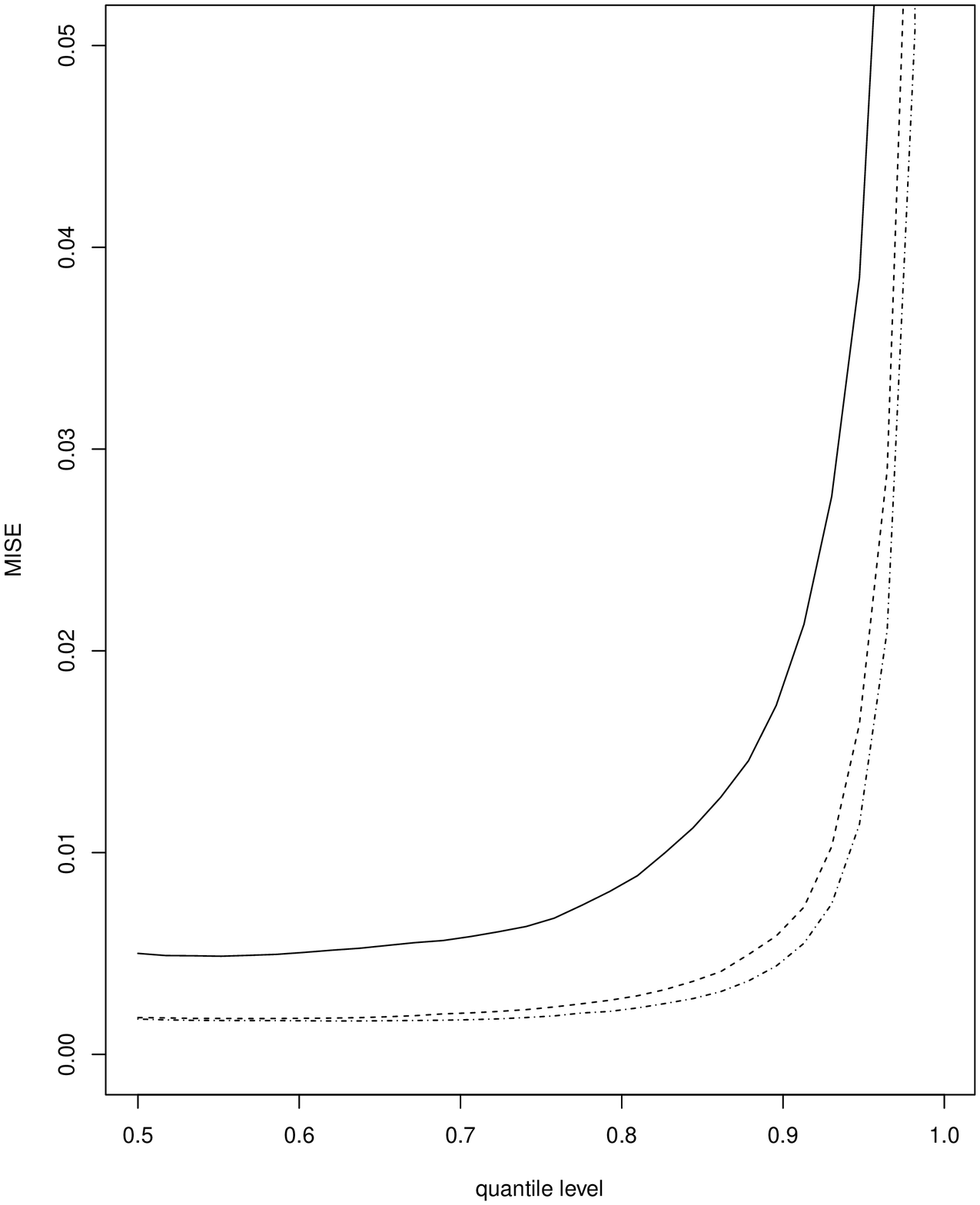}
}
\end{center}
\caption{MISE of the intermediate order quantile estimators for $\tau\in[0.5,0.995]$. The description is similar to Figure \ref{fig2}. \label{fig7} }
\end{figure}

\begin{figure}[h]
\begin{center}
\subfloat[$n=200$]{
\includegraphics[width=50mm,height=45mm]{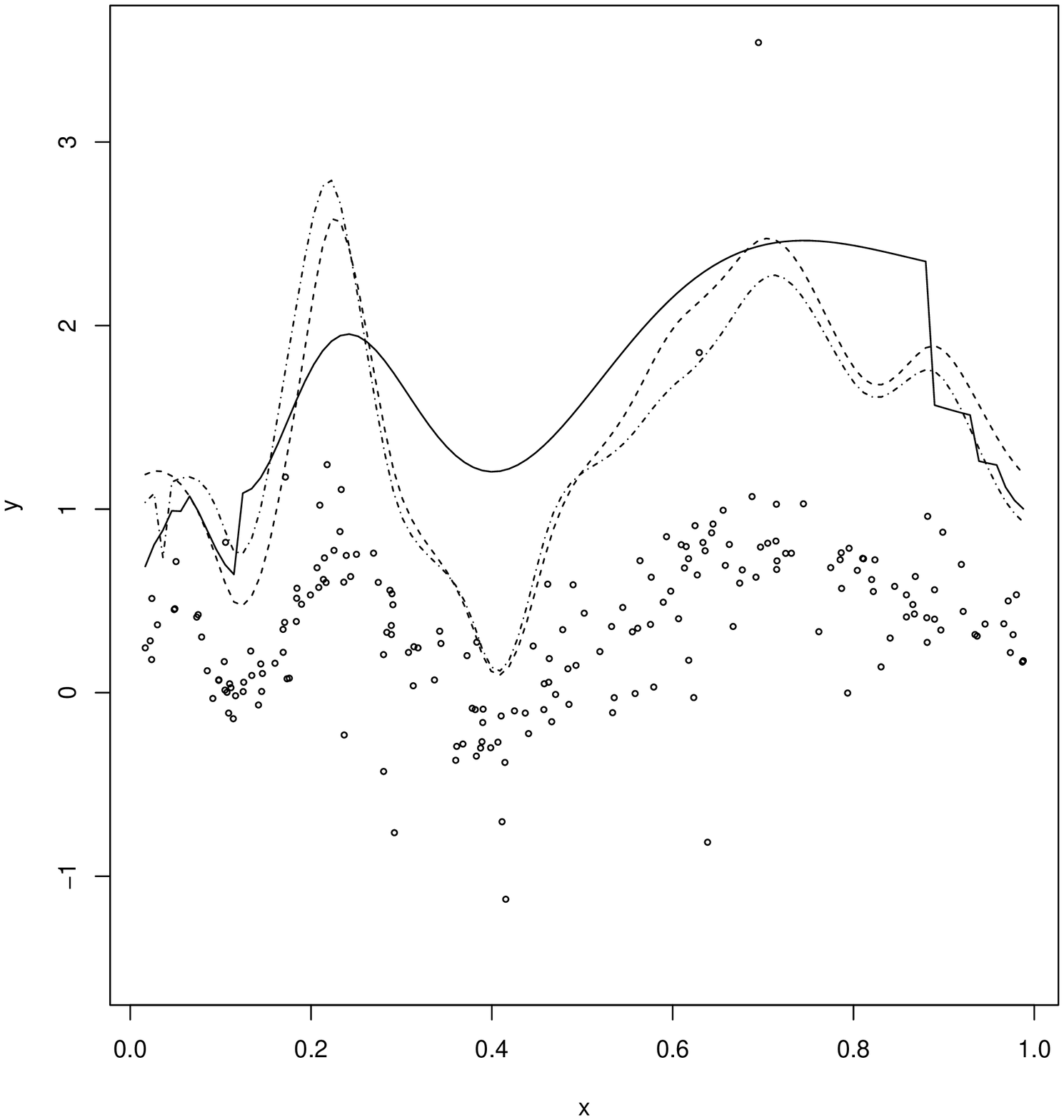}
}
\subfloat[$n=600$]{
\includegraphics[width=50mm,height=45mm]{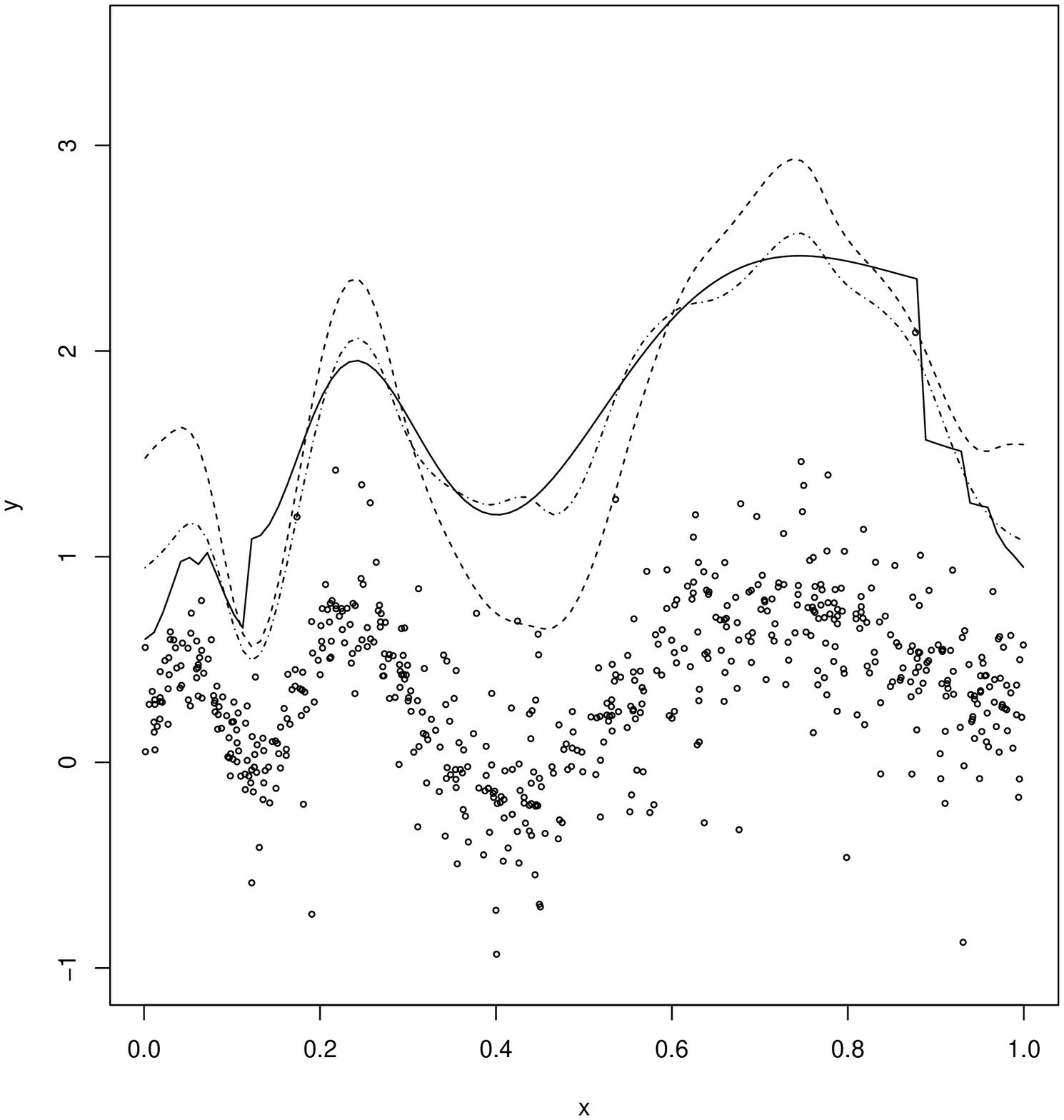}
}
\subfloat[$n=1000$]{
\includegraphics[width=50mm,height=45mm]{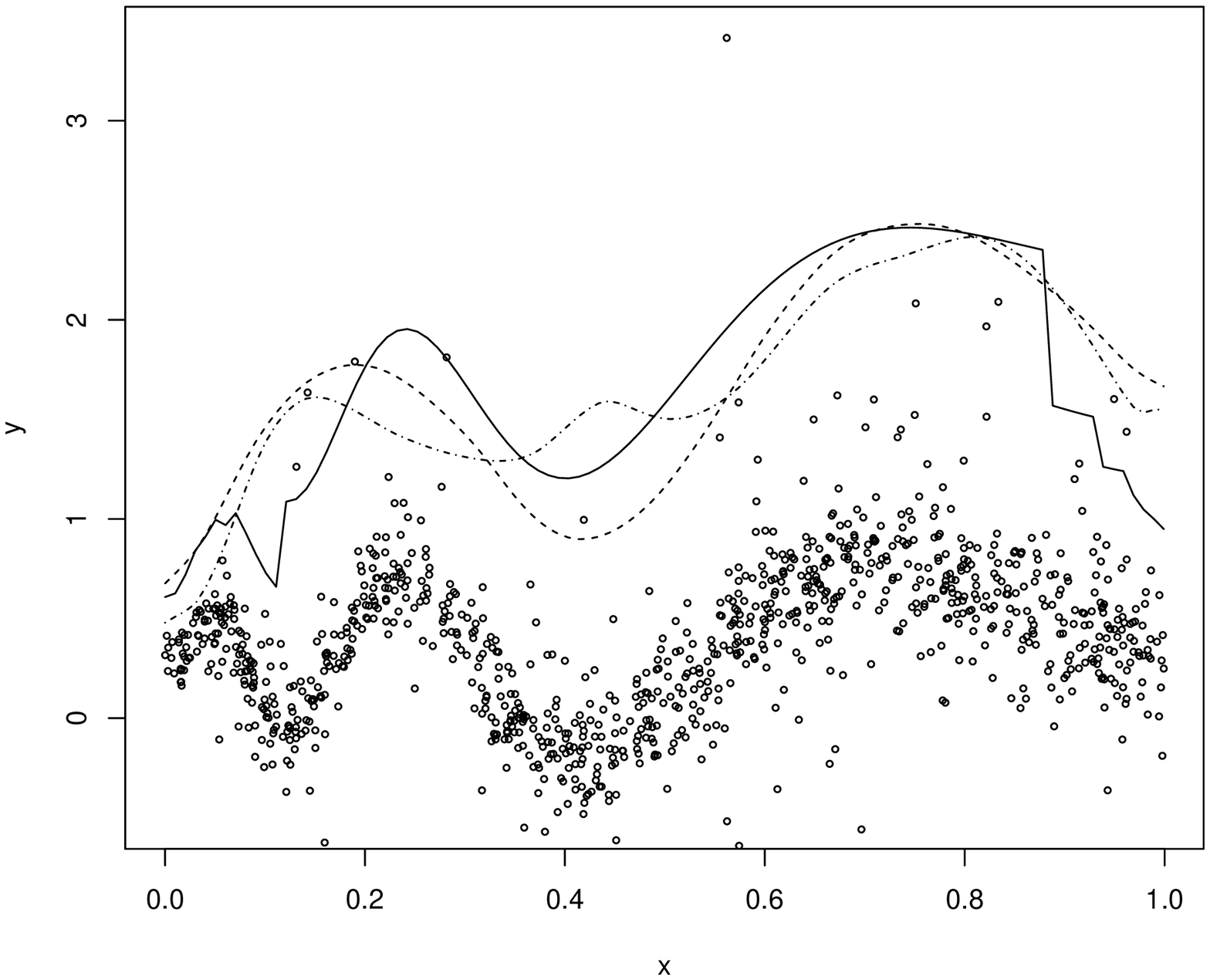}
}\\
\subfloat[MISE, $n=200$]{
\includegraphics[width=50mm,height=45mm]{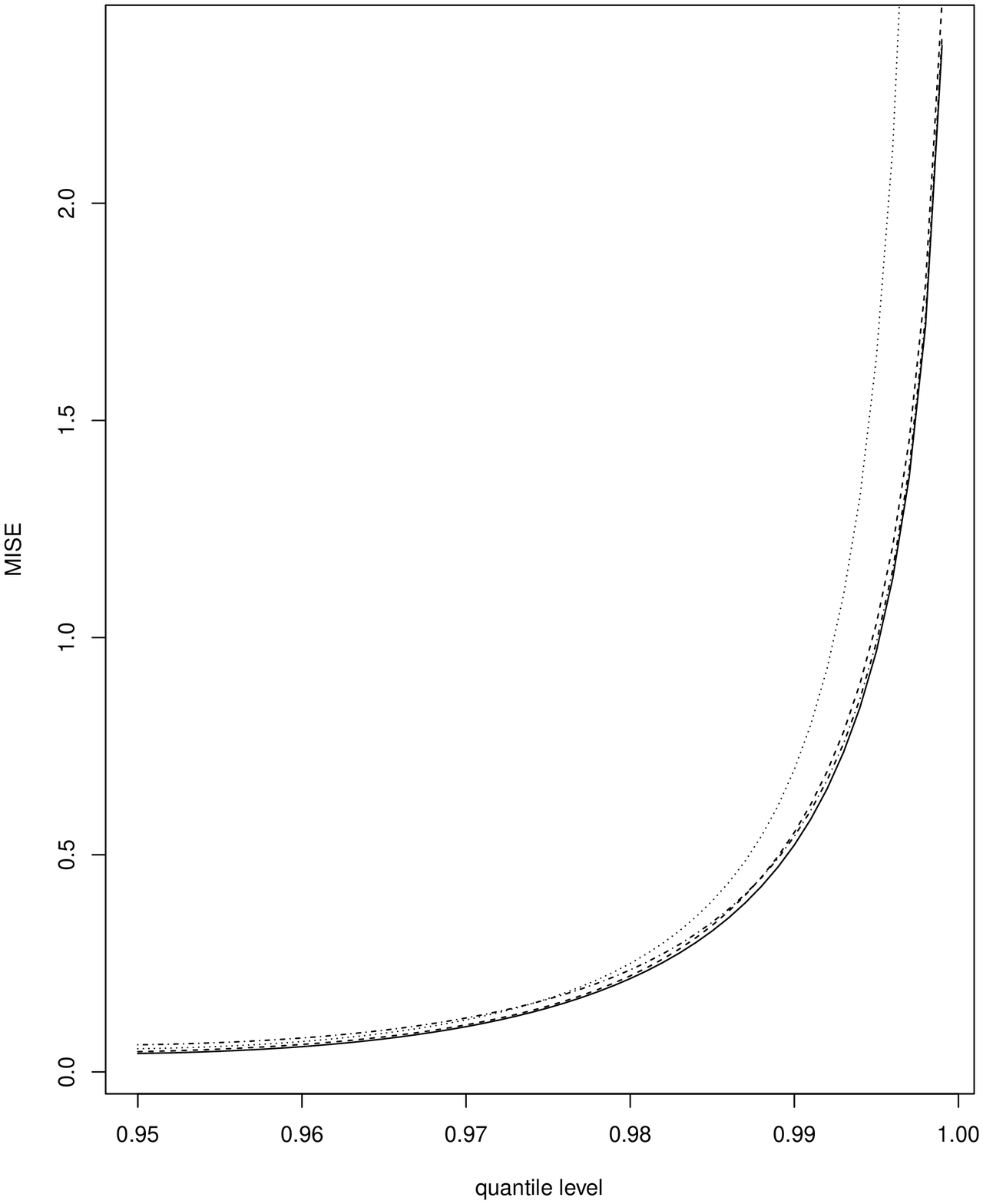}
}
\subfloat[MISE, $n=600$]{
\includegraphics[width=50mm,height=45mm]{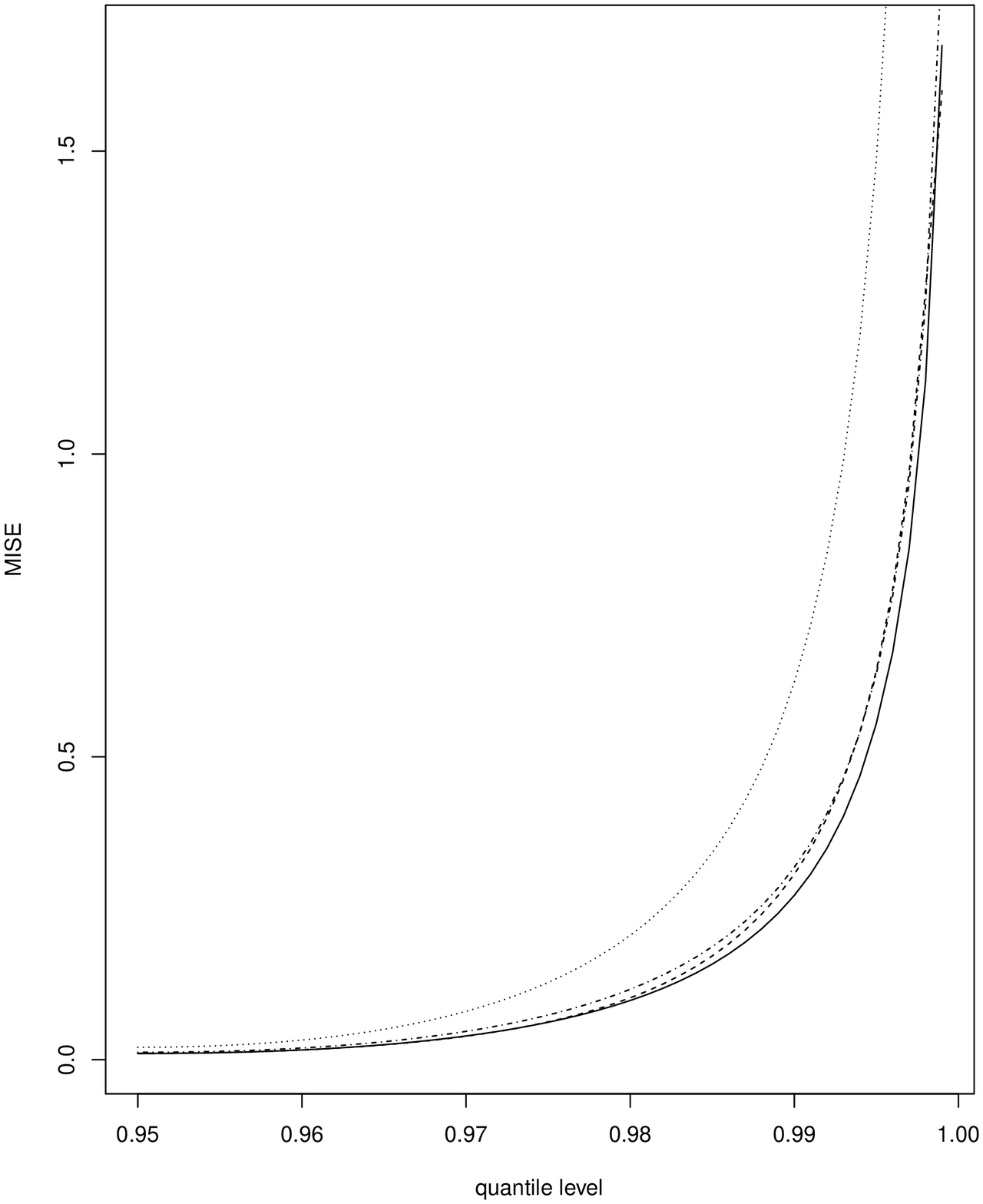}
}
\subfloat[MISE, $n=1000$]{
\includegraphics[width=50mm,height=45mm]{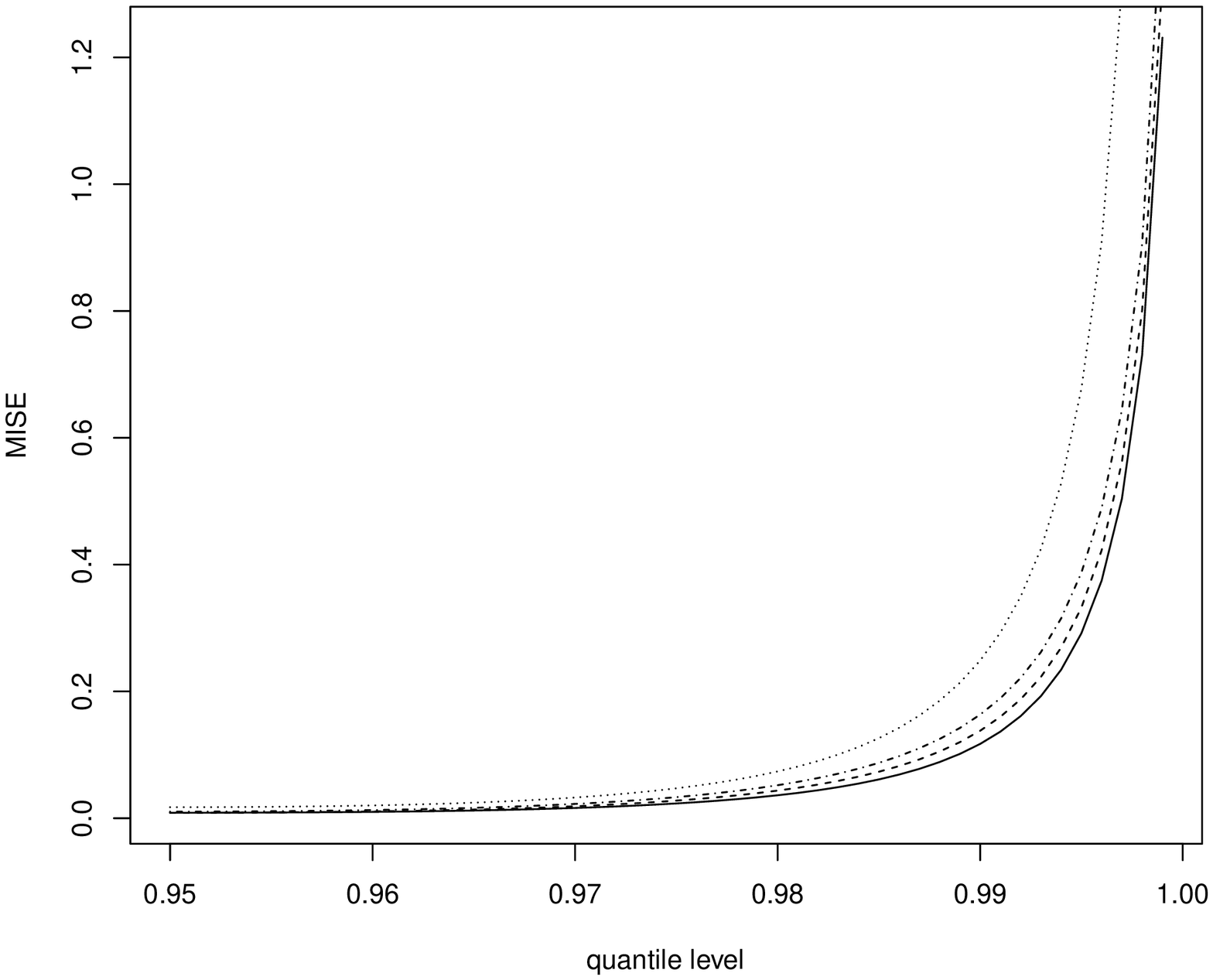}
}
\end{center}
\caption{(a--c): The true conditional quantiles the extreme order quantile estimators for $\tau=0.995$ for one dataset with model (b). 
The dataset is similar to that given in Figure \ref{fig6} for each $n$. 
(d--f) MISE of the estimators for $\tau\in[0.95, 0.999]$. 
The description is similar to Figure \ref{fig4} \label{fig8} }
\end{figure}

\begin{figure}[h]
\begin{center}
\subfloat[MISE, PSE-E]{
\includegraphics[width=70mm,height=50mm]{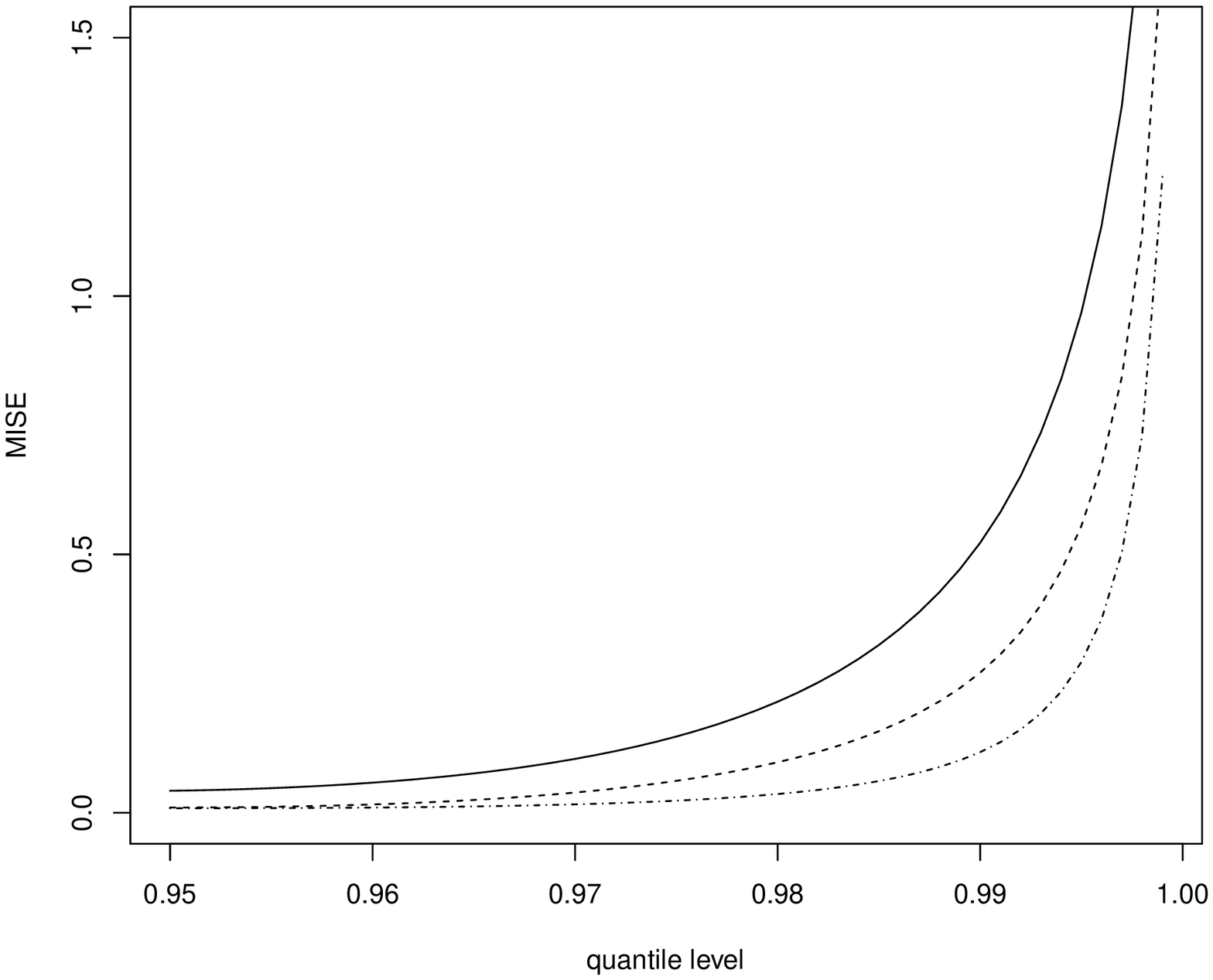}
}
\subfloat[MISE, PSE-Ep]{
\includegraphics[width=70mm,height=50mm]{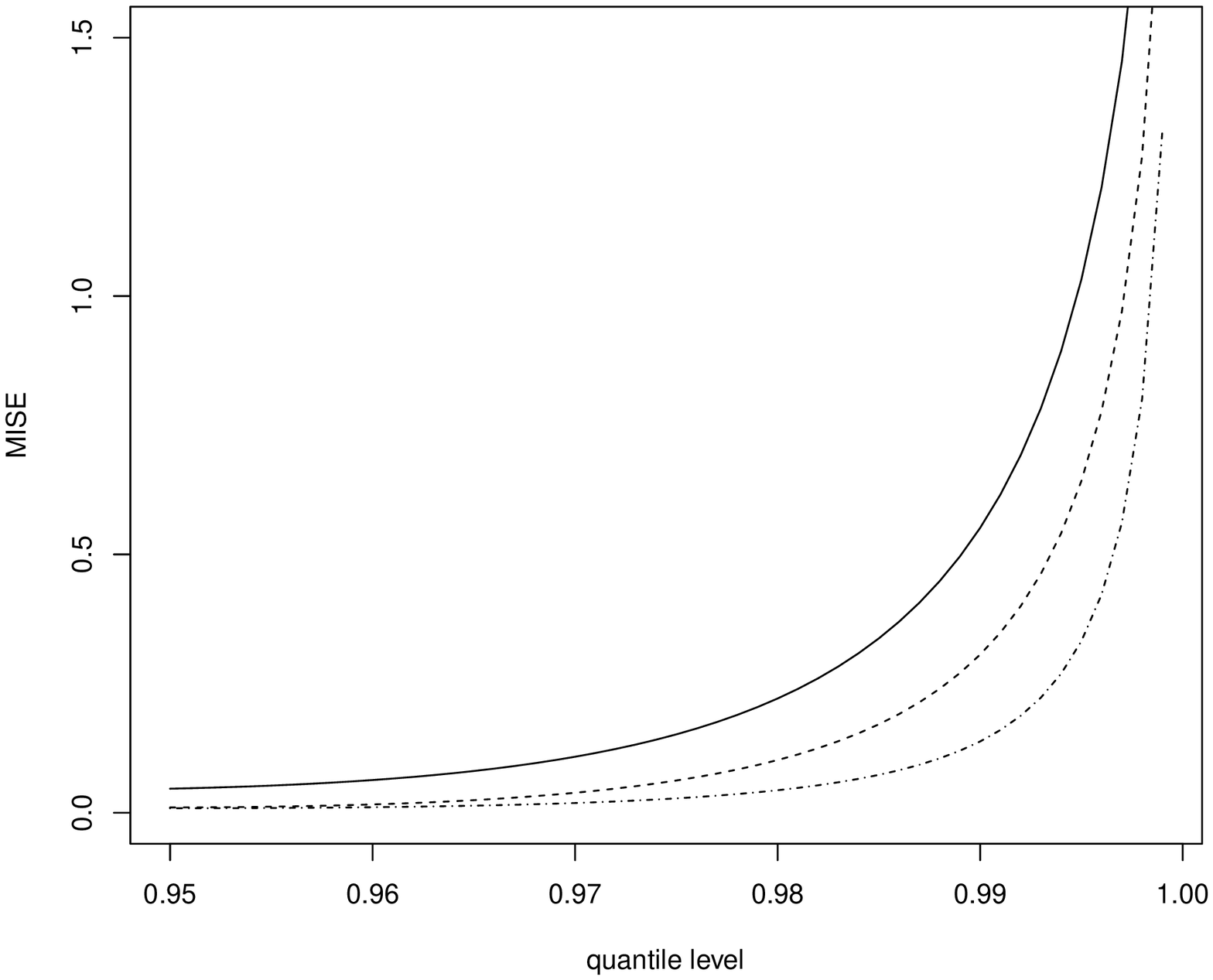}
}
\end{center}
\caption{(a) and (b) are the MISE of the PSE-E and the PSE-Ep, respectively. 
The description is similar to Figure \ref{fig5}
\label{fig9}}
\end{figure}

\section{Data example}

In this section, we apply the proposed methods to Beijing's ${\rm PM_{2.5}}$ Pollution data. 
The data is available from the website of the UCI Machine Learning Repository and Liang et al. (2016) provided several analyses for this data. 
One of fundamental purposes of this data is to analyze the relationship between ${\rm PM_{2.5}}$ concentration and other meteorological variables. 
Our particular interest here is the prediction of high conditional quantiles of $Y$, ${\rm PM_{2.5}}$ concentration ($\mu{\rm g/m^3}$), with the predictor $x$, temperature (degrees Celsius). 
We can observe from the scatter plot of $y$ and $x$ (see Figure \ref{fig10}) that this relationship is not linear for the upper quantile. 
Therefore, the nonparametric approach is suitable for this data.
We demonstrate the analysis for each year from 2011 to 2014. 
We then omit the missing data and hence the sample size is $n=8032$, 8295, $8678$, and 8661 in 2011, 2012, 2013, and 2014. 
We construct the extreme order quantile estimator for $\tau=0.999$. 
The quantile level $\tau=0.999$ indicates that about only eight events occur each year.

\begin{figure}[h]
\begin{center}
\subfloat[2011]{
\includegraphics[width=65mm,height=35mm]{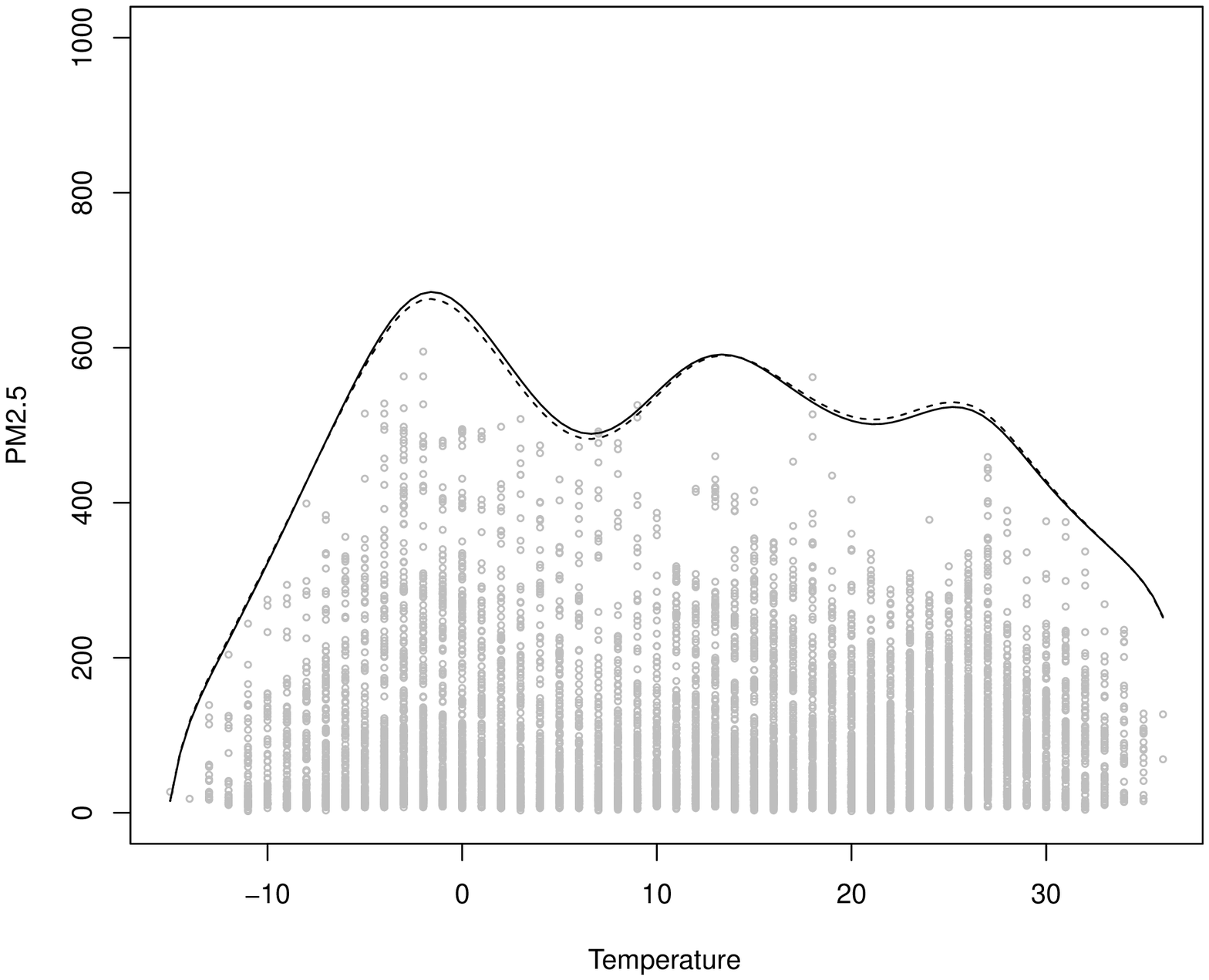}
}
\subfloat[2012]{
\includegraphics[width=65mm,height=35mm]{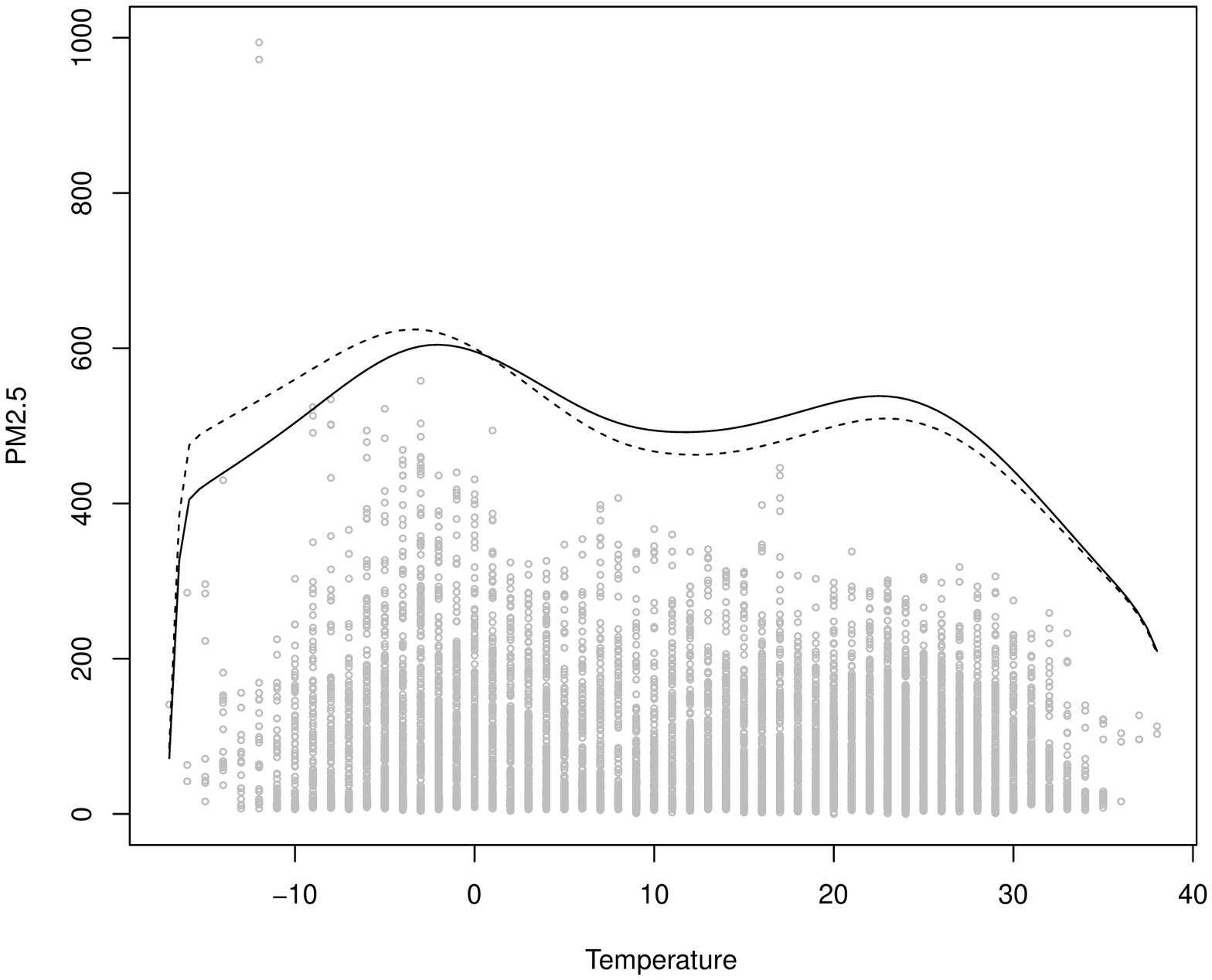}
}
\\
\subfloat[2013]{
\includegraphics[width=65mm,height=35mm]{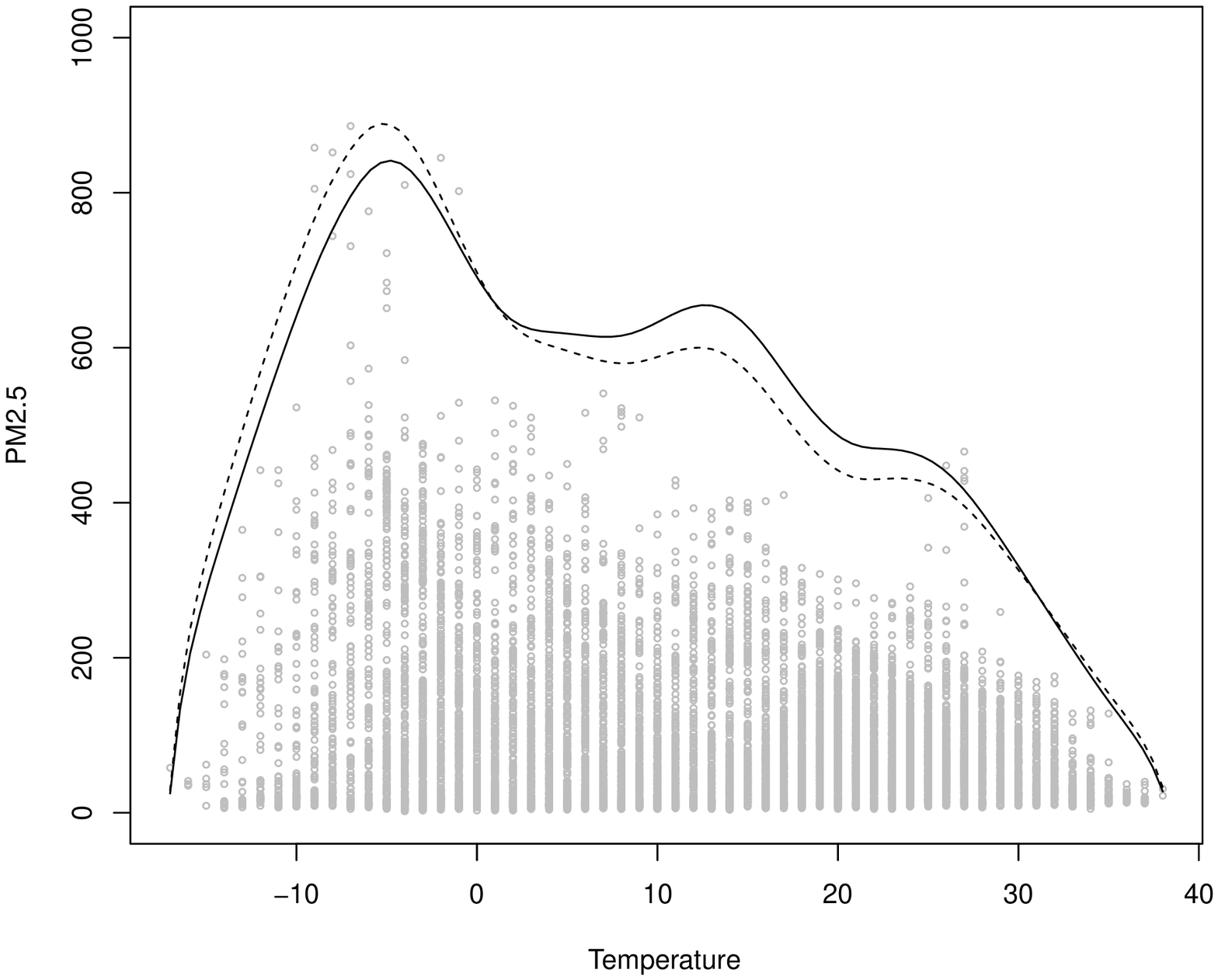}
}
\subfloat[2014]{
\includegraphics[width=65mm,height=35mm]{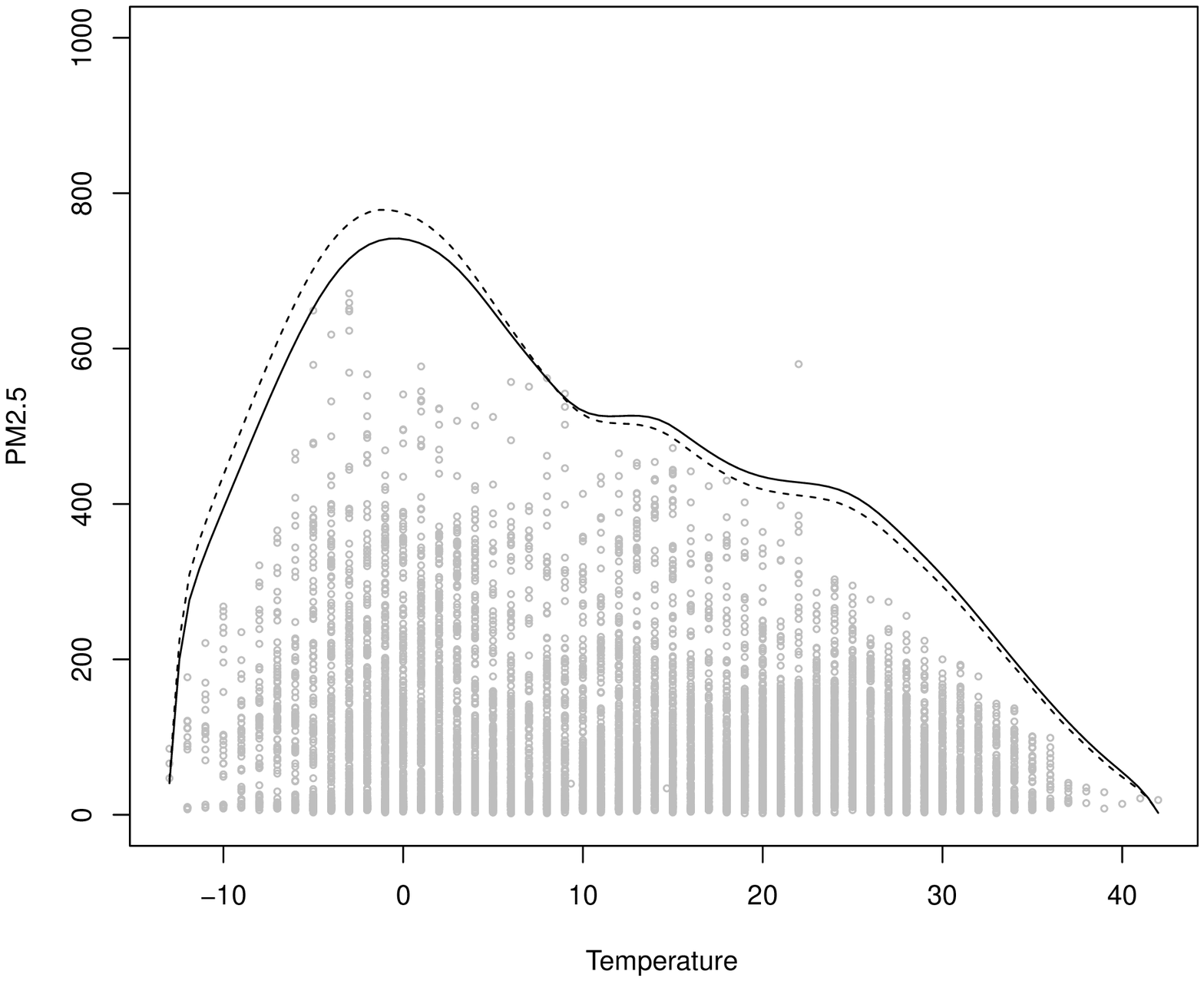}
}
\end{center}
\caption{The proposed extreme order quantile estimator for $\tau=0.999$ for years from 2011 to 2014. 
The solid line is $\hat{q}^{C}_Y(0.999|x)$ and the dashed line is $\hat{q}_Y(0.999|x)$ for each panel.
\label{fig10}}
\end{figure}

Figure \ref{fig10} shows the extreme order quantile estimators for $\tau=0.999$. 
It seems that the tail behavior is stable in 2011 and 2012.
In 2013 and 2014, the estimator of the conditional quantile at $x\in [0,10]$ has a large value compared with those in 2011 and 2012. 
Thus, in the cold season of 2013 and 2014, the risk of ${\rm PM}_{2.5}$ of pollution was increased. 
We can observe that $\hat{q}_Y(0.999|x)$ and $\hat{q}^{C}_Y(0.999|x)$ are quite similar. 
This indicates that the estimator of EVI $\hat{\gamma}(x)$ hardly changes with $x$. 
We report the EVI estimator used for constructing the extrapolated estimator $\hat{q}_Y$ and $\hat{q}_Y^{pool}$.
To obtain the estimator of EVI, we utilized $\eta=0.4$ as $\tau_1=n-[n^\eta]/(n+1)$ so that about $\tau_1=0.995$.
Then, EVI is estimated by using the intermediate order quantiles estimators for $\tau\in(0.978,0.995)$.
This choice leads to $\xi=n(1-\tau_1)\approx 37$ and, hence, this is a very conservative situation in the study of Chernozhukov and Fern\'andez-val (2011). 
Furthermore, we then adopted to use $k$ so that the sample path of the pooled EVI estimator is stable in each year.  
Figure \ref{fig11} shows the sample path of $\hat{\gamma}^{C}$ and selected $k$.
As a result, $\hat{\gamma}=0.220$, 0.226, $0.260$, and 0.231 in 2011, 2012, 2013, and 2014. 
Thus, the pooled EVI estimators are the same in 2011, 2012, and 2014, and it is only slightly larger in 2013.

\begin{figure}[h]
\begin{center}
\subfloat[2011]{
\includegraphics[width=65mm,height=35mm]{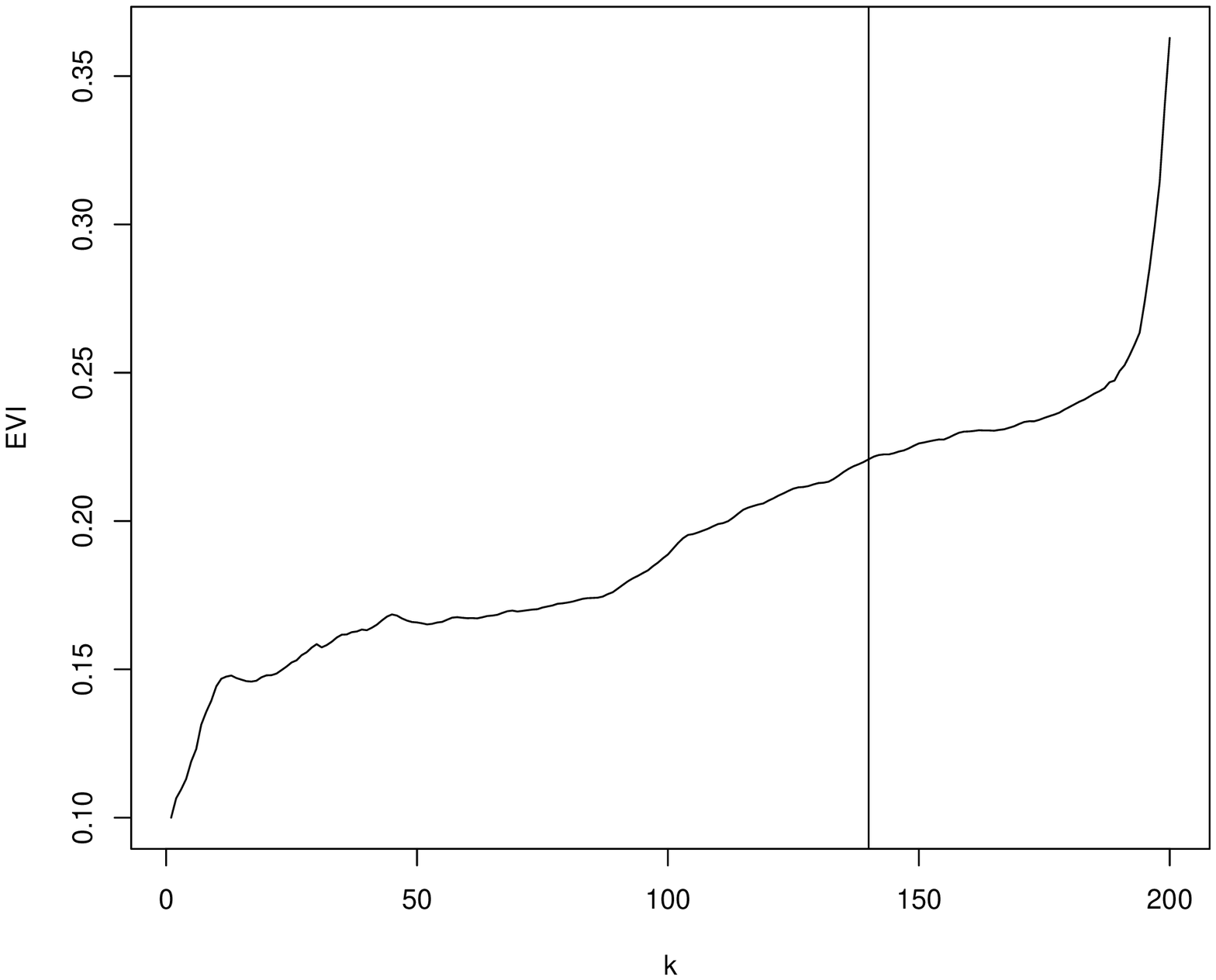}
}
\subfloat[2012]{
\includegraphics[width=65mm,height=35mm]{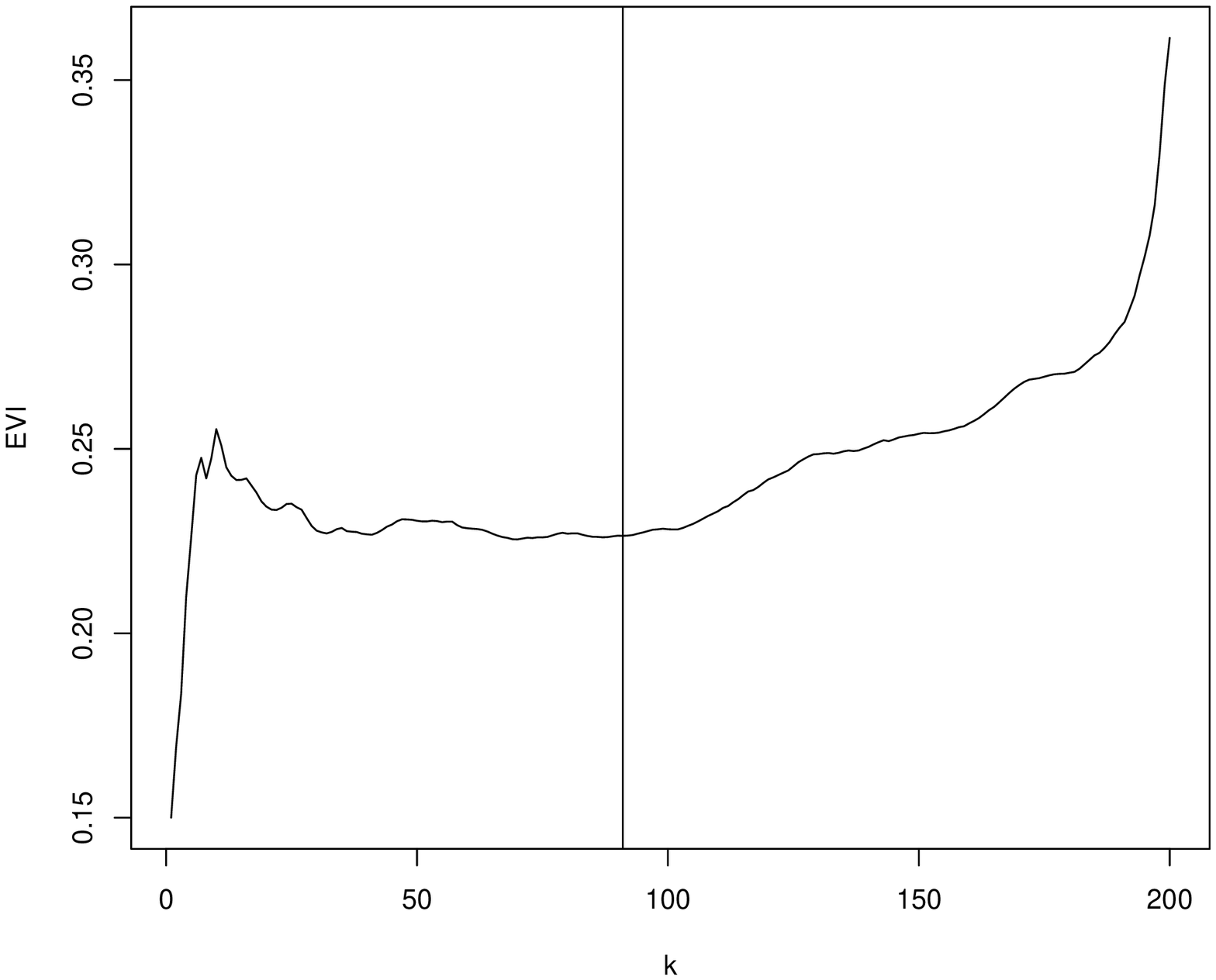}
}
\\
\subfloat[2013]{
\includegraphics[width=65mm,height=35mm]{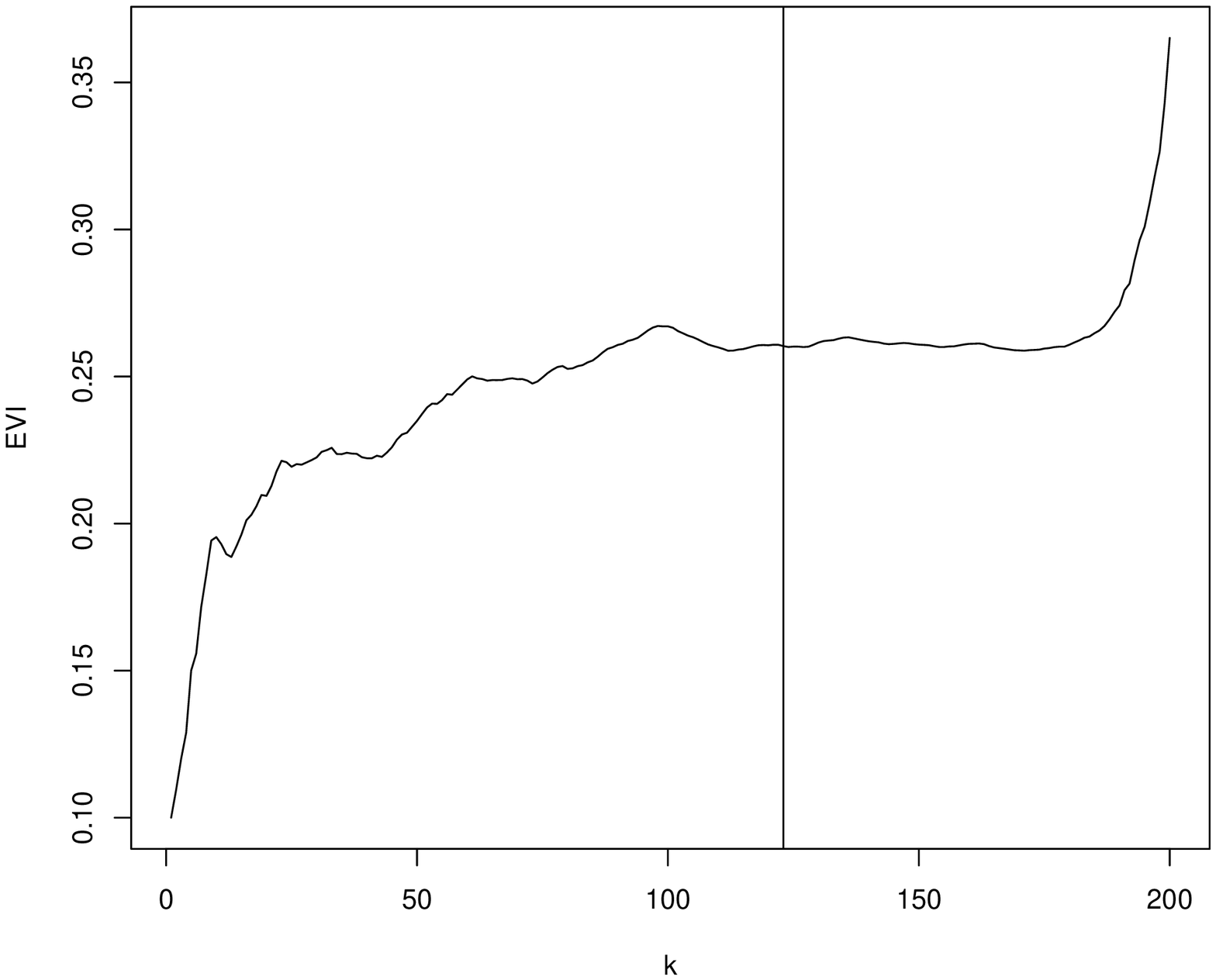}
}
\subfloat[2014]{
\includegraphics[width=65mm,height=35mm]{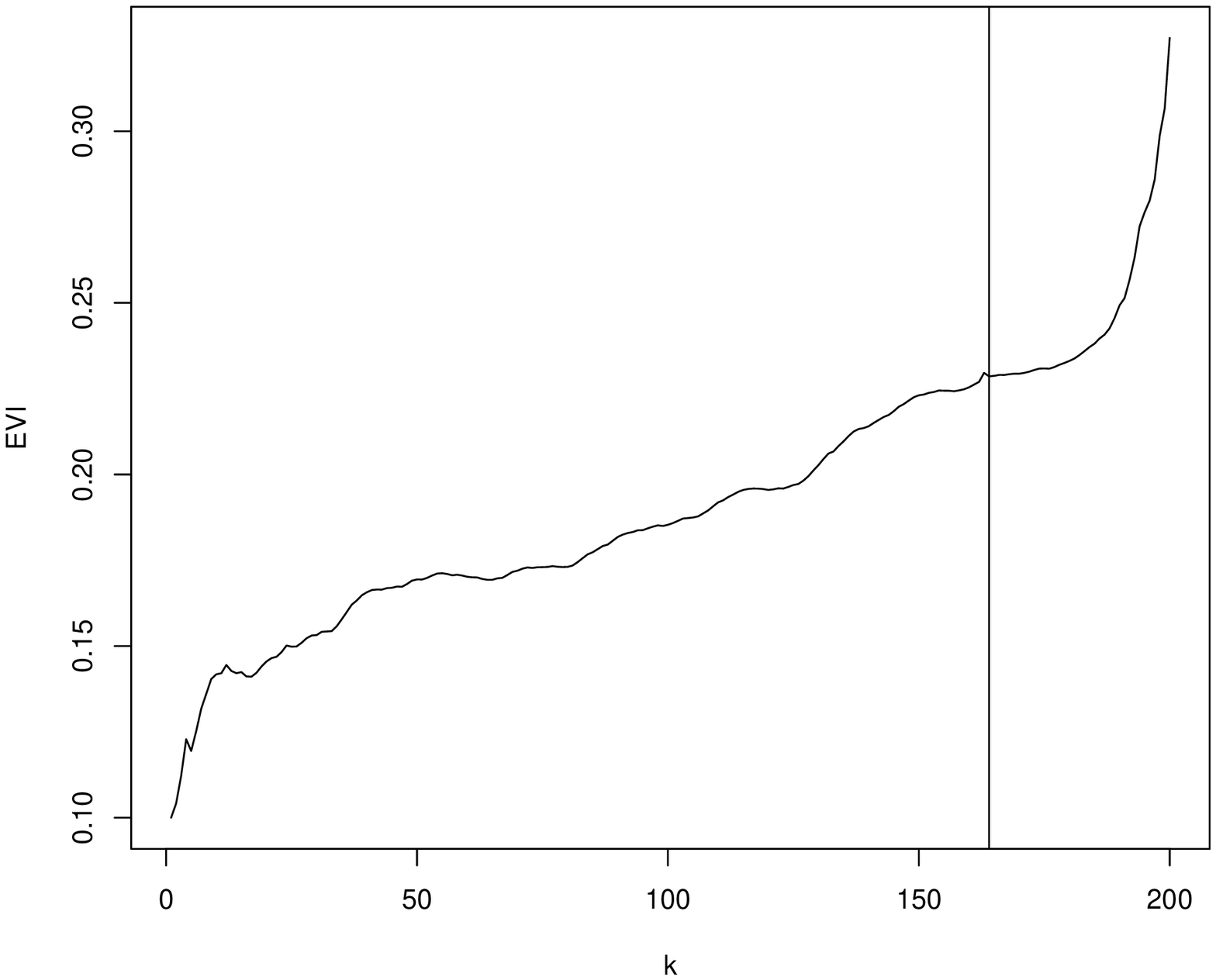}
}
\end{center}
\caption{The sample path of EVI estimates $\hat{\gamma}^{C}$ with $k$ for each year.
\label{fig11}}
\end{figure}

Figure \ref{fig12} illustrates $\hat{\gamma}(x)$ with selected $k$ for each year. 
We can observe that $\hat{\gamma}(x)$ has a narrow curve with $x$ in 2011, 2012, and 2014. 
On the other hand, in 2013, $\hat{\gamma}(x)$ at the boundary is rather larger than at the center.
Indeed, it can be seen from Figure \ref{fig10} that the extreme point can be observed at $x < 0$ in 2013.

\begin{figure}[h]
\begin{center}
\subfloat[2011]{
\includegraphics[width=65mm,height=35mm]{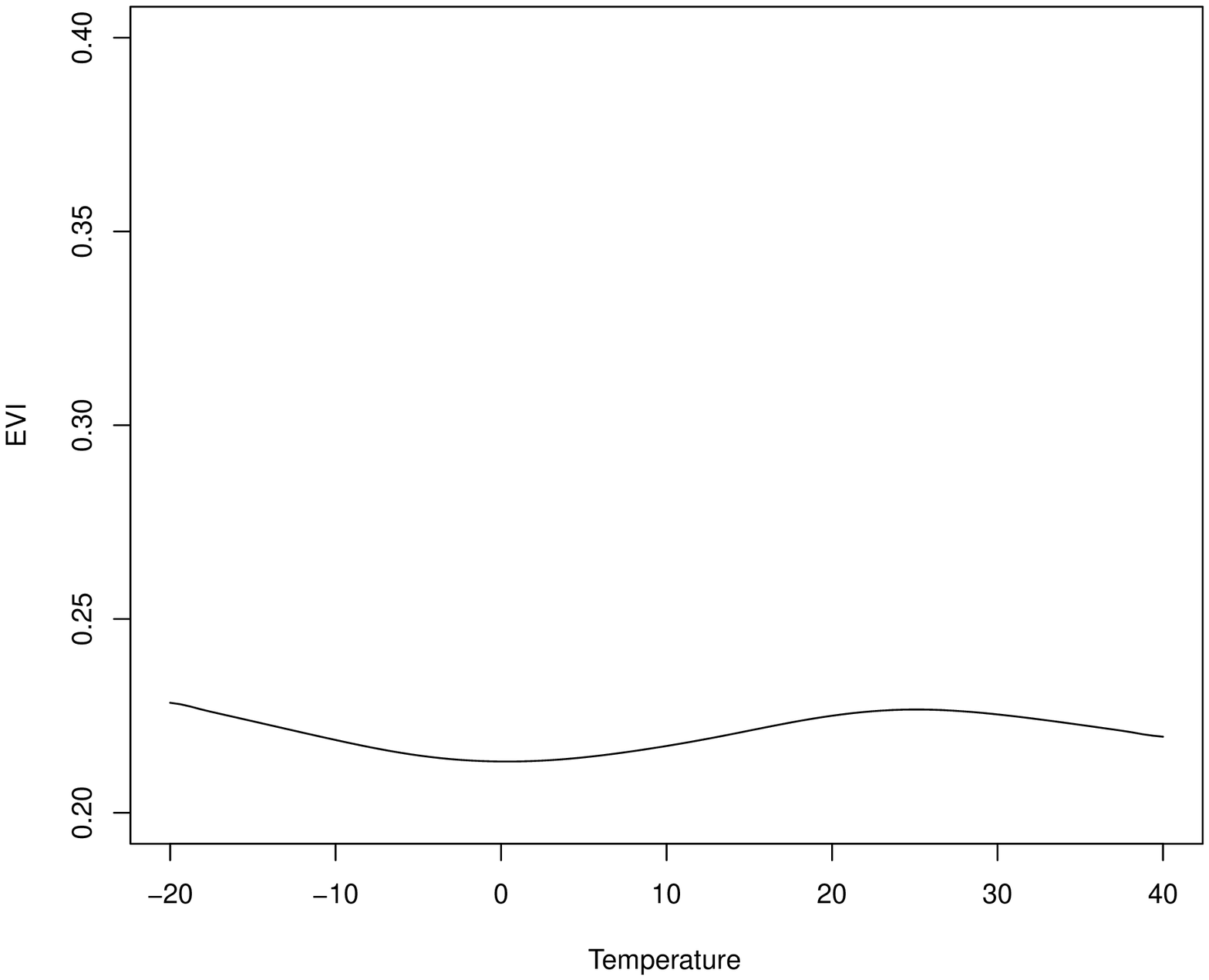}
}
\subfloat[2012]{
\includegraphics[width=65mm,height=35mm]{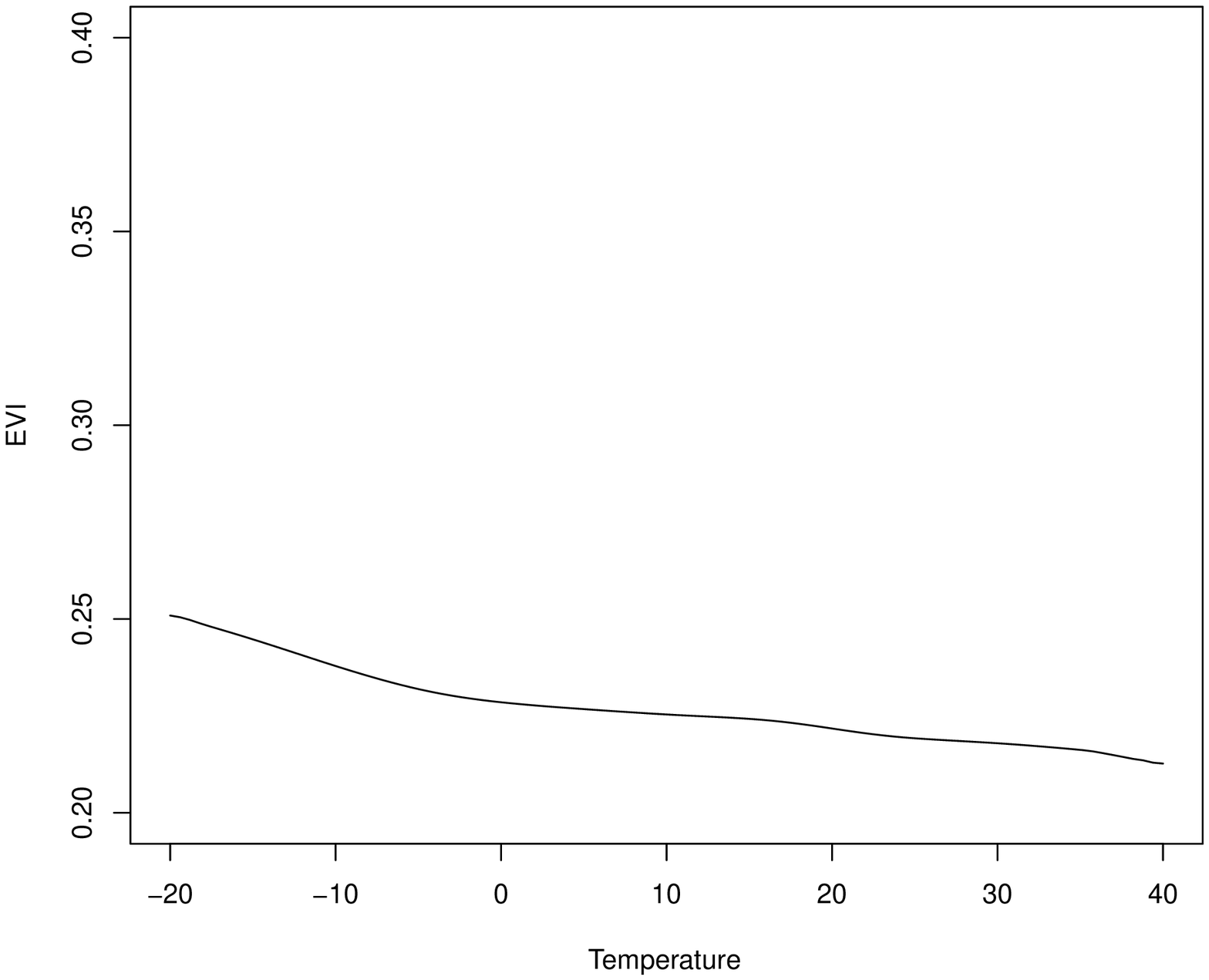}
}
\\
\subfloat[2013]{
\includegraphics[width=65mm,height=35mm]{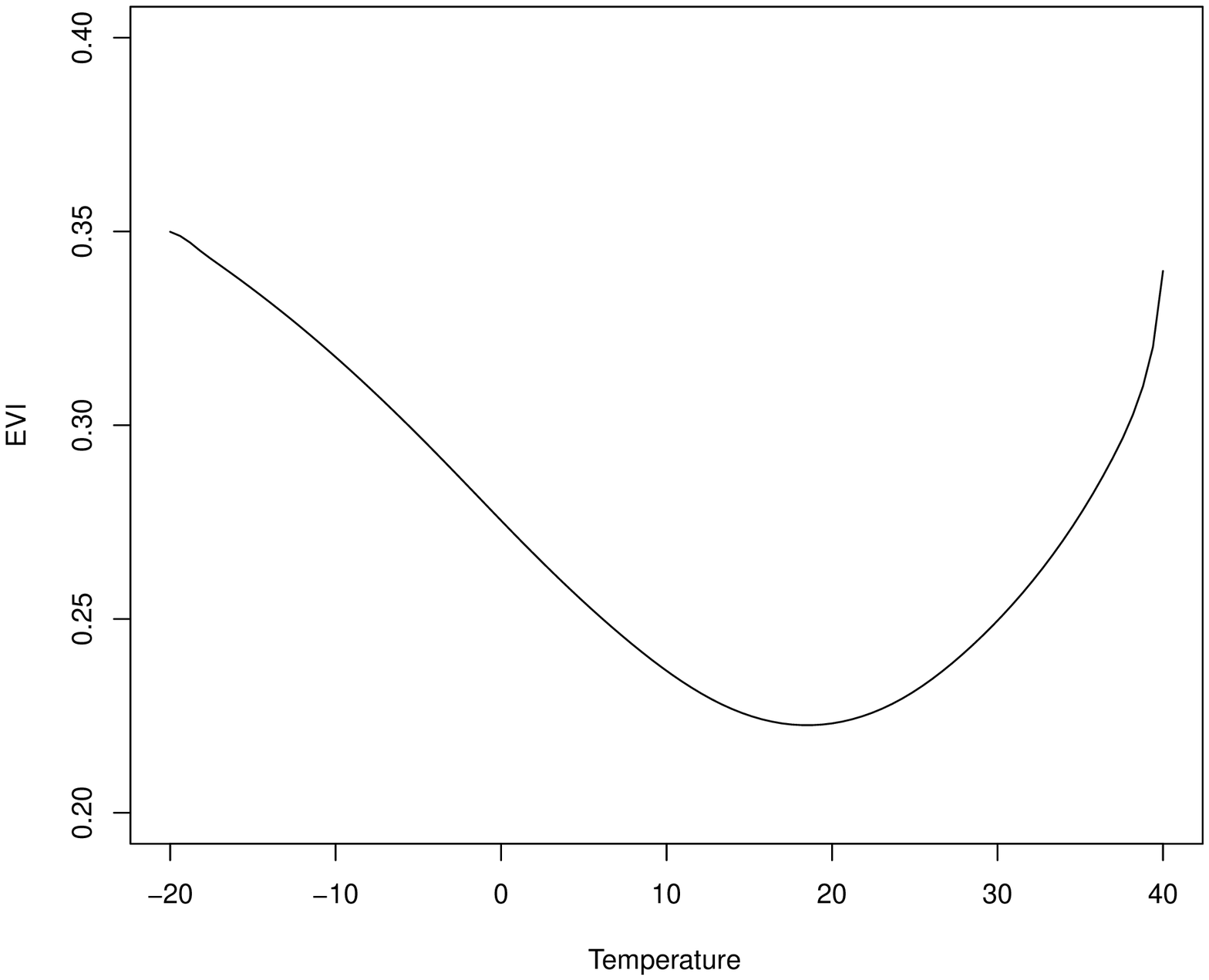}
}
\subfloat[2014]{
\includegraphics[width=65mm,height=35mm]{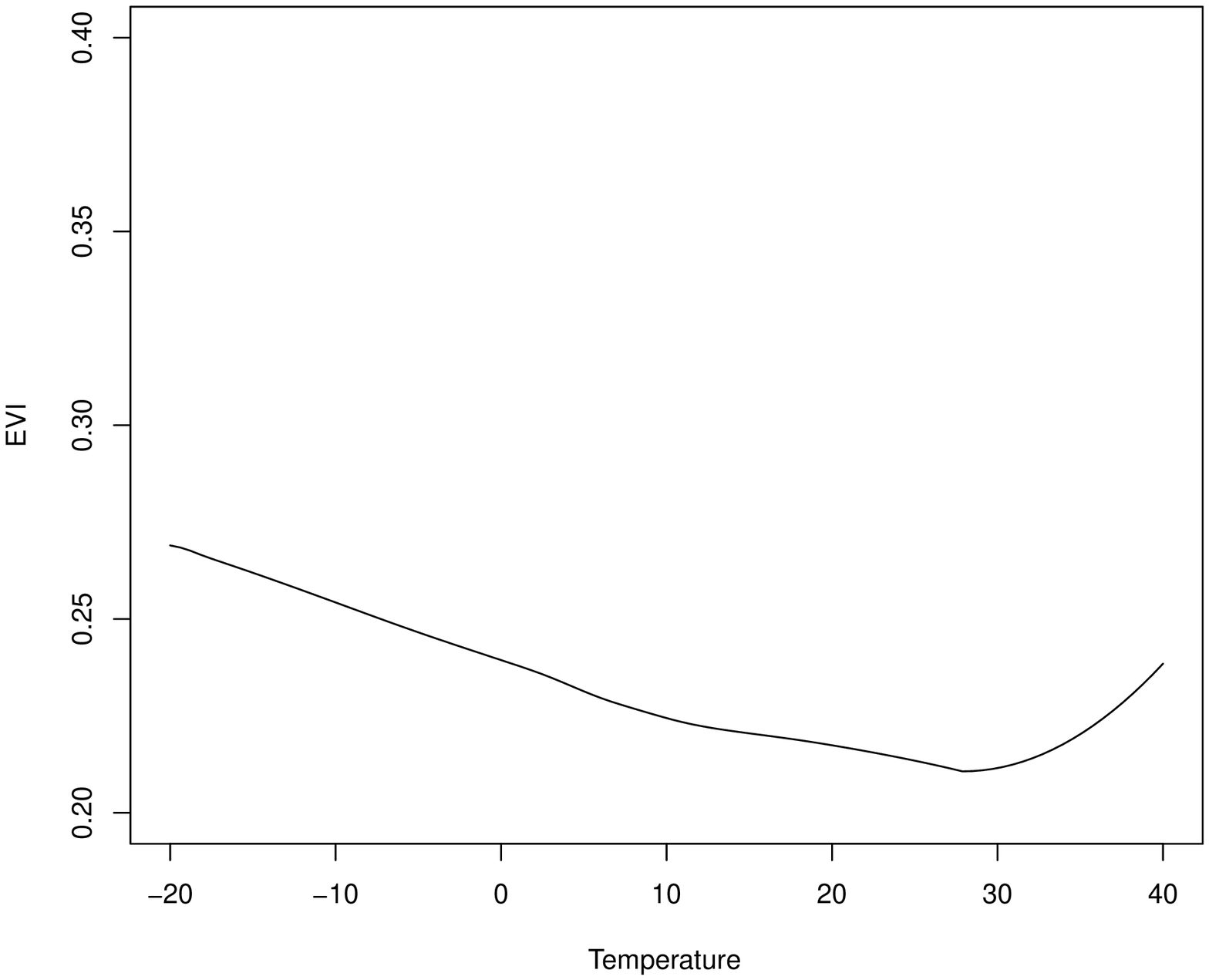}
}
\end{center}
\caption{The EVI estimates $\hat{\gamma}(x)$ with selected $k$ versus $x$ for each year.
\label{fig12}}
\end{figure}

\section{Conclusion}

We have developed the nonparametric extremal quantile regression methods for heavy-tailed data.
To show the mathematical property of the proposed estimator, we have used the hybrid techniques of asymptotic theory for the nonparametric regression and EVT for the tail behavior of the conditional distribution.
We then considered two quantile rates: (i) the intermediate order quantile that $(1-\tau)n\rightarrow \infty$ as $n\rightarrow\infty$; and (ii) the extreme order quantile that $(1-\tau)n\rightarrow \xi <\infty$. 
For the intermediate order quantile, the penalized spline estimator and its asymptotic normality have been developed. 
On the other hand, for the extremal order quantile, we have studied the Weissman-type extrapolated estimator using the intermediate order quantile estimator and its asymptotic normality. 
For the both intermediate and extreme order quantile, we show the asymptotic normality and the optimal rate of convergence of the proposed estimator. 
In particular, we found that the convergence speed of the estimator for extremal quantile is slower than that for center quantile. 
This result would be intuitively correct. 

We now discuss some future directions of study. 
First, for technical reasons, we assumed that the tail behavior of the conditional distribution of $Y$ given $X=x$ is equivalent across the predictor $x$ (see Conditions A1). 
Since the estimation of the tail behavior is difficult due to data sparsity, this assumption is helpful in data analysis. 
However, if this assumption is violated, additional research is needed to explicate the performance of the estimator. 

Second, in this paper, we focused on the spline smoothing with $\ell_2$ penalty. 
On the other hand, Koenker et al.\ (1994) and Koenker (2011) studied the smoothing spline with the $\ell_1$-type penalty. 
That is, the penalty is defined as $\int_a^b |s^{(m)}(x)|dx$ instead of $\int_a^b \{s^{(m)}(x)\}^2dx$. 
It is known that the estimator with $\ell_1$ penalty has local adaptiveness.
Therefore, for some cases, the performance of the estimator with $\ell_1$ penalty would be better than that with $\ell_2$ penalty. 
Recently, the $\ell_1$-type penalty has been rapidly developed in mean regression (Kim et al. 2009; Tibshirani 2014; Sadhanala and Tibshirani 2017). 
Although it is difficult to show the asymptotic distribution of an $\ell_1$ penalized estimator, the developments of the $\ell_1$ penalized smoothing to the extremal quantile regression is an interesting problem. 

Finally, we can consider extending the proposed method to the multidimensional case. 
In particular, it is important to use the additive models (Hastie and Tibshirani 1990) that for $x=(x_1,\ldots,x_d)\in\mathbb{R}^d$, the true function is can be decomposed as 
$$
f(x)=f_1(x_1)+\ldots+f_d(x_d),
$$
where each $f_j$ is the univariate function.
The additive model is known to enables us to avoid the problem of dimensionality. 
The nonparametric additive quantile regression (for center quantile) was studied by Lu and Yu (2004), Horowitz and Lee (2005), Cheng et al. (2011), Koenker (2011), Lee et al. (2012) and references therein.
However, the extremal inference of the additive quantile regression has not yet been studied until now.
It seems that the developments of the extremal quantile regression with the additive model is an important issue.

\subsection*{Appendix}

\subsubsection*{Appendix A: Computation of the intermediate order quantile estimator} 

We describe here the approximation algorithm to solve (\ref{nonparaquantile}). 
Nychka et al. (1995) and Reiss and Huang (2012) proposed the penalized iteratively reweighted least squares algorithm. 
We use here the modified version of Nychka et al. (1995).
Nychka et al. (1995) proposed the following optimization:
\begin{eqnarray}
\tilde{\vec{b}}=\underset{\vec{b}}{\argmin}\left\{\sum_{i=1}^n \rho_{\tau,\alpha}(y_i-\vec{B}(x_i)^T\vec{b})+\lambda \vec{b}^T D_m^TRD_m\vec{b}\right\}, \label{pirls}
\end{eqnarray}
where 
$$
\rho_{\tau,\alpha}(u)=
\left\{
\begin{array}{cc}
\rho_\tau(u), &|u|>\alpha,\\
\tau u^2/\alpha, & 0\leq u\leq \alpha,\\
(1-\tau)u^2/\alpha,& -\alpha\leq u\leq 0
\end{array}
\right.
$$
Obviously, the loss function $\rho_{\tau,\alpha}(u)$ tends to $\rho_\tau$ as $\alpha\rightarrow 0$.
However, for the tail quantile ($\tau\approx 0,1$), the above algorithm will not converge in our implementation.
Therefore, we suggest using the slightly modified version of (\ref{pirls}). 
The idea is similar to the proximal gradient method and it is very simple. 
The modified algorithm is defined as given the $t$th iteration estimate $\tilde{\vec{b}}^{(t)}$, 
\begin{eqnarray}
\tilde{\vec{b}}^{(t+1)}=\underset{\vec{b}}{\argmin}\left\{\sum_{i=1}^n \rho_{\tau,\alpha_t}(y_i-\vec{B}(x_i)^T\vec{b})+\lambda \vec{b}^T D_m^TRD_m\vec{b}+\eta_t ||\vec{b}-\tilde{\vec{b}}^{(t)}||^2\right\}, \label{pirls2}
\end{eqnarray}
where $a_t$ and $\eta_t$ are the step sizes. 
For $a_t$, it is sufficient to use some sequence $\{\alpha_t\}$ satisfying $\alpha_t\rightarrow 0$ as $t\rightarrow \infty$.
On the other hand, the choice of $\eta_t$ is more important than $a_t$.
If $\eta_t$ is large, $\vec{b}^{(t+1)}\approx \vec{b}^{(t)}$ and hence the speed of convergence is very fast. 
When $\eta_t$ is small, on the other hand, there is almost no difference between (\ref{pirls2}) and (\ref{pirls}), that is, the algorithm does not converge in many cases. 
In Sections 4 and 5, we used $\alpha_t=0.1\times 2^{-t}$ and $\eta_{t+1}=1.2 \eta_t  (\eta_0=1)$.

\subsubsection*{Appendix B: Proof of theorems}

Let $Z$ be the $n\times (K+p)$ design matrix having elements $B_j(x_i)$, $G_n=n^{-1}Z^T{\rm diag}[H(x_i)^{-\gamma}]Z$ and let $\Lambda_n=G_n+(\lambda q(\tau)/((1-\tau) n))D_m^TRD_m$. 
For any matrix $A=(a_{ij})_{ij}$, we denote $||A||_{\infty}=\max_{i,j}|a_{ij}|$.
We first state the technical lemmas to prove theorems in this paper.

\begin{lemma}\label{cla1}
Suppose that $K(m,\tau)\geq 1$. Under Conditions A--C, the following statements holds: $||G(1)||_{\infty}=O(K^{-1})$, $||G(H^{-\gamma})||_{\infty}=O(K^{-1})$, $||\Lambda_n^{-1}||_{\infty}=O(K(1+K(m,\tau)^{2m})^{-1})$,  $||\Lambda(H^{-\gamma})^{-1}||_{\infty}=O(K(1+K(m,\tau)^{2m})^{-1})$, $\max_{i,j}\{|\{G_n^{-1}-G(H^{-\gamma})^{-1}\}_{ij}|\}=o(K)$ and $\max_{i,j}\{|\{\Lambda_n^{-1}-\Lambda(H^{-\gamma})^{-1}\}_{ij}|\}=o(K(1+K(m,\tau)^{2m})^{-1})$.
\end{lemma}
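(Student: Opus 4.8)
The plan is to read the six claims as a graded sequence of estimates on banded matrices and to attack them with three standard tools from the penalized-spline literature: the compact support and partition-of-unity properties of the $B$-spline basis, de~Boor's stability inequality $\|\sum_k b_kB_k\|_{L^2}^2\asymp K^{-1}\|\vec{b}\|^2$ under the quasi-uniform knots of condition B3, and the Demmler--Reinsch-type simultaneous diagonalization of $G(H^{-\gamma})$ and $D_m^TRD_m$ used by Claeskens et al.\ (2009) and Yoshida (2013). Throughout I read $\Lambda(H^{-\gamma})=\gamma^{-1}G(H^{-\gamma})+\mu D_m^TRD_m$ with $\mu=\lambda q(\tau_I)/\{(1-\tau_I)n\}$, so that $G_n$ is exactly the empirical counterpart of $G(H^{-\gamma})$.

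I would first dispose of the two Gram-matrix bounds. Each $B_k$ is supported on at most $p+1$ knot intervals, each of width $O(K^{-1})$ by B3, and satisfies $0\le B_k\le1$; since condition A3 makes $H$ positive, continuous, and bounded on the compact $[a,b]$, the weight $H^{-\gamma}$ is bounded away from $0$ and $\infty$. Hence $|G(H^{-\gamma})_{kl}|\le\|H^{-\gamma}\|_\infty\int_{\mathrm{supp}B_k\cap\mathrm{supp}B_l}B_kB_l\,dx=O(K^{-1})$, the entry vanishing unless $|k-l|\le p$, and the case $h\equiv1$ is identical.

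The heart of the lemma is the pair of inverse bounds. From de~Boor's stability and the two-sided bound on $H^{-\gamma}$ I would record that $\lambda_{\min}(G(H^{-\gamma}))\asymp\lambda_{\max}(G(H^{-\gamma}))\asymp K^{-1}$, and that $\mu D_m^TRD_m$ is positive semidefinite with an $m$-dimensional null space (the polynomial coefficient vectors) and top eigenvalues of order $\mu K^{2m-1}=K^{-1}K(m,\tau_I)^{2m}$. Diagonalizing $D_m^TRD_m$ against $G(H^{-\gamma})$ then yields $\Lambda(H^{-\gamma})^{-1}=G(H^{-\gamma})^{-1/2}U\,\mathrm{diag}\{(\gamma^{-1}+\mu d_j)^{-1}\}U^TG(H^{-\gamma})^{-1/2}$, where the Demmler--Reinsch values obey $d_1=\dots=d_m=0$ and $d_j\asymp j^{2m}$ thereafter. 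An entry of $\Lambda(H^{-\gamma})^{-1}$ is therefore a weighted sum $\sum_j(\gamma^{-1}+\mu d_j)^{-1}$ against the de~Boor--localized weights supplied by the columns of $G(H^{-\gamma})^{-1/2}U$, and condition C1, $K(m,\tau_I)\ge1$, is precisely what forces the cutoff frequency $j^\ast\asymp(\gamma\mu)^{-1/2m}$ at which the penalty begins to dominate to satisfy $j^\ast\le K$, so that the sum is of the order asserted in the lemma; the identical diagonalization gives the matching order for $\Lambda_n^{-1}$ once $G_n$ is shown to inherit the conditioning of $G(H^{-\gamma})$. The step I expect to be the main obstacle is converting this spectral (operator-norm) information into a genuine entrywise bound $\max_{i,j}$: this requires combining the spectral sum with the exponential off-diagonal decay of inverses of banded positive-definite matrices (Demko--Moss--Smith), so that the effective bandwidth induced by the penalty is tracked correctly through $K(m,\tau_I)$.

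Finally, for the two perturbation statements I would use $G_n^{-1}-G(H^{-\gamma})^{-1}=-G(H^{-\gamma})^{-1}\{G_n-G(H^{-\gamma})\}G_n^{-1}$ and the analogous identity for $\Lambda$. Each entry of $G_n-G(H^{-\gamma})$ is a centered average of $n$ independent bounded terms, of which only $O(n/K)$ are nonzero (those $x_i$ lying in the joint support), so its variance is $O((nK)^{-1})$ and a Bernstein bound with a union bound over the $O(K)$ occupied bands gives $\|G_n-G(H^{-\gamma})\|_\infty=O_p((nK)^{-1/2}\sqrt{\log K})$. Propagating this through the two inverses, whose entrywise size and exponential decay are already controlled from the previous step, and invoking $K=O(n^\alpha)$ with $\alpha<1$ from condition B4, yields $\max_{i,j}|\{G_n^{-1}-G(H^{-\gamma})^{-1}\}_{ij}|=O_p(K^{3/2}n^{-1/2}\sqrt{\log K})=o(K)$ and the analogous $o(K(1+K(m,\tau_I)^{2m})^{-1})$ rate for $\Lambda$. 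The careful bookkeeping of the exponential decay inside these matrix products, rather than any single estimate, is what makes this last part technical.
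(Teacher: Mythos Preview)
Your plan is correct and coincides with the paper's approach: the paper does not prove this lemma independently but simply cites Lemmas~6.3--6.4 of Zhou et al.\ (1998) and Lemmas~A1--A2 of Claeskens et al.\ (2009), and your argument (compact $B$-spline support for the Gram bounds, de~Boor stability together with a Demmler--Reinsch diagonalization and Demko-type exponential decay for the inverse bounds, and the resolvent identity with concentration for the perturbation statements) is precisely what those references contain. The paper itself invokes Demko (1977) for exactly the entrywise decay you describe in its proof of Lemma~2, so you are reconstructing the cited content rather than departing from it.
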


\begin{lemma}\label{cla2}
Suppose that $K(m,\tau)\geq 1$. Under Assumptions 2--3, for any $K+p$ square matrix $A$ satisfying $||A||_\infty=O(n^a)$ for $a\in\mathbb{R}$, $||G(H^{-\gamma})A||_\infty=O(n^aK^{-1})$ and $||\Lambda(H^{-\gamma})^{-1}A||_\infty=O(n^a K(1+K(m,\tau)^{2m})^{-1})$.
\end{lemma}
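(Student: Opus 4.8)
The plan is to exploit two structural facts: the local support of the $B$-spline basis, which makes the Gram-type matrices banded, and the geometric decay of the entries of the inverse of a banded matrix, which is already the mechanism underlying the entrywise bounds of Lemma \ref{cla1}. For the first assertion I would use that $B_i(x)B_j(x)\equiv 0$ whenever $|i-j|>p$, so that $G(H^{-\gamma})$ is banded with bandwidth $p$; by Lemma \ref{cla1}, the boundedness of $H$ from Condition A3, and the fact that the common support of two overlapping basis functions has length $O(K^{-1})$, every nonzero entry is $O(K^{-1})$. Writing $(G(H^{-\gamma})A)_{ij}=\sum_{l:\,|i-l|\le p}G(H^{-\gamma})_{il}A_{lj}$, each such entry is a sum of at most $2p+1$ terms, each bounded by $||G(H^{-\gamma})||_\infty\,||A||_\infty=O(K^{-1})O(n^a)$. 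Since $p$ is fixed, taking the maximum over $i,j$ gives $||G(H^{-\gamma})A||_\infty=O(n^aK^{-1})$.

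For the second assertion I would bound the entries of $\Lambda(H^{-\gamma})^{-1}A$ through the row sums of $\Lambda(H^{-\gamma})^{-1}$. From $(\Lambda(H^{-\gamma})^{-1}A)_{ij}=\sum_l(\Lambda(H^{-\gamma})^{-1})_{il}A_{lj}$ one gets $|(\Lambda(H^{-\gamma})^{-1}A)_{ij}|\le ||A||_\infty\,\max_i\sum_l|(\Lambda(H^{-\gamma})^{-1})_{il}|$, so it suffices to show that the maximal row sum of $\Lambda(H^{-\gamma})^{-1}$ is of the same order $O(K(1+K(m,\tau)^{2m})^{-1})$ as its maximal entry, which is already controlled in Lemma \ref{cla1}. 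Since both $G(H^{-\gamma})$ and $D_m^TRD_m$ are banded, $\Lambda(H^{-\gamma})$ is banded, and the standard exponential-decay estimate for inverses of banded matrices yields $|(\Lambda(H^{-\gamma})^{-1})_{il}|\le C\,r^{|i-l|}$ for some $r\in(0,1)$ and $C$ of order $O(K(1+K(m,\tau)^{2m})^{-1})$. Summing the geometric series, $\max_i\sum_l|(\Lambda(H^{-\gamma})^{-1})_{il}|\le C\sum_{s\in\mathbb{Z}}r^{|s|}=O(C)$, whence $||\Lambda(H^{-\gamma})^{-1}A||_\infty=O(n^a)\,O(C)=O(n^aK(1+K(m,\tau)^{2m})^{-1})$.

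The delicate step is the second one. Whereas $G(H^{-\gamma})$ is literally banded so that the first product involves only finitely many nonzero terms per entry, $\Lambda(H^{-\gamma})^{-1}$ is a full matrix, and a row has $K+p\to\infty$ entries; the argument works only if the decay ratio $r$ is bounded away from $1$ uniformly in $n$, so that the row sum inflates the entrywise bound by a bounded factor rather than one growing with $K$. This amounts to controlling the condition number of a suitably rescaled version of $\Lambda(H^{-\gamma})$, which I would obtain from the spectral estimates already used in the proof of Lemma \ref{cla1}, namely that the eigenvalues of $G(H^{-\gamma})$ are of order $K^{-1}$ and that the penalty term $D_m^TRD_m$ is positive semidefinite and contributes nonnegatively. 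Once this uniform decay is established, both assertions reduce to the elementary summations above.
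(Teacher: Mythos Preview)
Your proposal is correct and follows essentially the same route as the paper: the bandedness of $G(H^{-\gamma})$ handles the first product trivially, and for the second you invoke the exponential off-diagonal decay of the inverse of a banded matrix (the paper cites Demko, 1977) and sum the resulting geometric series to bound the row sums of $\Lambda(H^{-\gamma})^{-1}$. Your explicit remark that $r$ must stay bounded away from $1$ uniformly in $n$ is a point the paper's sketch leaves implicit but which is indeed what the Demko-type estimate (together with the spectral control from Lemma~\ref{cla1}) delivers.
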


Lemma \ref{cla1} is proved by Lemma 6.3 and 6.4 of Zhou et al. (1998) and Lemma A1 and A2 of Claeskens et al. (2009). 
Lemma \ref{cla2} says that the order of product of matrices is dependent only on the order of element of these matrices although the each element of $GA$ and $\Lambda(h)^{-1}A$ is infinite sum as $n\rightarrow \infty$. 
The proof of Lemma \ref{cla2} can also be shown by Lemma A1 of Claeskens et al. (2009). 
Therefore, we only describe the outline here.

\begin{proof}[Proof of Lemma \ref{cla2}]

For any continuous and bounded function $h$, the matrix $G(h)$ is the band matrix from property of $B$-splines and hence $||GA||_\infty=O(n^aK^{-1})$ is obvious. 
Next, $\Lambda(h)^{-1}$ is the inverse of the band matrix. 
From Demko (1977), there exists $c>0$ and $r\in(0,1)$ such that $|\{\Lambda(h)^{-1}\}_{ij}|<cr^{|i-j|}K(1+K(m,\tau)^{2m})^{-1}$. 
Thus, straightforward calculation yields that the infinite sum of each element of $\Lambda(h)^{-1}A$ is bounded by order of $\Lambda(h)^{-1}$ and the absolute of the maximum of element of $A$. 

\end{proof}

\begin{proof}[Proof of Theorem \ref{biasvariance1}]
Define $h(x)=H(x)^{-\gamma}$.
We write $\tau\equiv \tau_I$ and hence $\tau\rightarrow 1$ and $(1-\tau)n\rightarrow \infty$ as $n\rightarrow \infty$.
By the fundamental property of $B$-spline basis, $s_0^{(m)}(\tau|x)=d^m s_0(\tau|x)/dx^m$ can be written as $s_0^{(m)}(\tau|x)= \vec{B}^{[p-m]}(x)^T D_m \vec{b}_0(\tau)$, where $\vec{B}^{[p-m]}(x)$ is the vector having element $\{B_1^{[p-m]}(x),\ldots,B_{K+p-m}^{[p-m]}(x)\}$ and $B_k^{[p-m]}(x)$'s are $(p-m)$th degree $B$-spline bases.
Therefore, the shrinkage bias can be expressed as 
\begin{eqnarray*}
b_\lambda(\tau|x)=\frac{\lambda q(\tau)}{(1-\tau) n}\vec{B}(x)^T\Lambda(H^{-\gamma})^{-1}D_m^T\int_a^b \vec{B}^{[p-m]}(x)s_0^{(m)}(\tau|x)dx. 
\end{eqnarray*}
From Conditions A--B, we have $s_0(\tau|x)=q_Y(\tau|x)(1+o(1))=h(x)q(\tau)(1+o(1))$ and $s_0^{(m)}(\tau|x)=h^{(m)}(x)q(\tau)(1+o(1))$ as $\tau\rightarrow 1$. 
Since $h$ is bounded function, we get $\sup_{x\in[a,b]}|s_0^{(m)}(\tau|x)|=O(q(\tau))$. 
From the property of $B$-spline basis, we also obtain $\int_a^b B_k(x)ds=O(K^{-1})$. 
Thus, each element of $\int_a^b \vec{B}^{[p-m]}(x)s_0^{(m)}(\tau|x)dx$ has the order $O(K^{-1}q(\tau))$ 
Furthermore, the result of Cardot (2000) provides $||D_m||_\infty=O(K^{m})$. 
Therefore, Lemmas \ref{cla1}, \ref{cla2} and the fact that $K(m,\tau)^m/(1+K(m,\tau)^{2m})<1/2$ yield that 
\begin{eqnarray*}
b_\lambda(\tau|x)
&=&
O\left(q(\tau)\frac{\lambda q(\tau)K^{m}}{(1-\tau) n}(1+K(m,\tau)^{2m})^{-1}\right)\\
&=&
O\left(q(\tau)\left(\frac{\lambda q(\tau)}{(1-\tau) n}\right)^{1/2}\frac{K(m,\tau)^{m}}{1+K(m,\tau)^{2m}}\right)\\
&\leq &
O\left(q(\tau)\left(\frac{\lambda q(\tau)}{(1-\tau) n}\right)^{1/2}\right).
\end{eqnarray*}
Next, we show the asymptotic order of $v(x|\tau)$. 
Since $||G||_\infty=O(K^{-1})$, from Lemmas \ref{cla1}--\ref{cla2}, we have
\begin{eqnarray*}
v(\tau|x)&=&O\left(\frac{K q(\tau)^2}{(1-\tau)n}\{1+K(m,\tau)^{2m}\}^{-2} \right)\\
&=&O\left(\frac{q(\tau)}{(1-\tau)n}\left(\frac{\lambda q(\tau)}{(1-\tau) n}\right)^{-1/2m}K(m,\tau)\{1+K(m,\tau)^{2m}\} \right)\\
&\leq &O\left(\frac{q(\tau)}{(1-\tau)n}\left(\frac{\lambda q(\tau)}{(1-\tau) n}\right)^{-1/2m}\right),
\end{eqnarray*}
which completes the proof.
\end{proof}

\begin{proof}[Proof of Theorem \ref{as.dist}] 
We write $\tau\equiv\tau_I$ and hence $\tau\rightarrow 1$ and $(1-\tau)n\rightarrow \infty$ as $n\rightarrow \infty$.
Let $U_i=Y_i-\vec{B}(x)^T\vec{b}_0(\tau)$, $a_n=\sqrt{(1-\tau) n}/q(\tau)$ and  
\begin{eqnarray*}
Q_n(\vec{\delta}|\tau)&=&\frac{a_n}{\sqrt{n(1-\tau)}} \sum_{i=1}^n \left\{\rho_\tau(U_i-\vec{B}(x_i)^T\vec{\delta}/a_n)-\rho_\tau(U_i)\right\}\\
&&+\frac{a_n\lambda}{2\sqrt{n(1-\tau)}} (\vec{b}_0(\tau)+\vec{\delta}/a_n)^TD_m^TRD_m(\vec{b}_0(\tau)+\vec{\delta}/a_n).
\end{eqnarray*}
Then the minimizer of $Q_n$ is obtained as 
$$
\tilde{\vec{\delta}}=a_n(\tilde{\vec{b}}(\tau)-\vec{b}_0(\tau)).
$$
Using Knight's identity (Knight, 1998), we have 
\begin{eqnarray*}
\rho_\tau(u-v)-\rho_\tau(u)=-v(\tau-I(u<0))+\int_0^v \{I(u\leq s)-I(u\leq 0)\}ds
\end{eqnarray*}
and writing
\begin{eqnarray*}
Q_n(\vec{\delta}|\tau)=W_n(\tau)^T\vec{\delta}+G_n(\vec{\delta}|\tau)+\frac{a_n\lambda}{2\sqrt{(1-\tau) n}}(\vec{b}_0(\tau)+\vec{\delta}/a_n)^TD_m^TRD_m(\vec{b}_0(\tau)+\vec{\delta}/a_n),
\end{eqnarray*}
where 
\begin{eqnarray*}
W_n(\tau)&\equiv&\frac{-1}{\sqrt{(1-\tau) n}}\sum_{i=1}^n (\tau-I(Y_i<\vec{B}(x_i)^T\vec{b}_0))\vec{B}(x_i),\\
G_n(\vec{\delta}|\tau)&\equiv&\sum_{i=1}^n \int_0^{\vec{B}(x)^T\vec{\delta}/a_n}I(U_i\leq  s)-I(U_i\leq 0)ds.
\end{eqnarray*}
Since $\tau=P(Y<q_Y(\tau|x)|X=x)=P(Y<\vec{B}(x)^T\vec{b}_0(\tau)|X=x)+o(K^{-m})$ and $E[I(Y<\vec{B}(x)^T\vec{b}_0(\tau))]=P(Y<q_Y(\tau|x)|X=x)+o(K^{-m}(1-\tau)^{-\gamma})$, we obtain $E[W_n(\tau)]=o(1)$. 
The variance of $W_n(\tau)$ can be evaluated as 
\begin{eqnarray*}
V[W_n(\tau)^T\vec{\delta}]
=
\frac{\tau(1-\tau)}{1-\tau}\vec{\delta}^T\left(\frac{1}{n}Z^TZ\right)\vec{\delta}
\stackrel{P}{\longrightarrow }
\vec{\delta}^TG\vec{\delta}
\end{eqnarray*}
as $n\rightarrow \infty$ and $\tau\rightarrow 1$.
Lyapnov's condition for the central limit theorem and Cram${\rm \grave{e}}$r-Wold device yield that 
$W_n(\tau)$ is asymptotically distributed as $\vec{W}$, which is the normal with mean $\vec{0}$ and variance $G$. 

Next, we show that as $n\rightarrow \infty$ and $\tau\rightarrow 1$, 
\begin{eqnarray}
G_n(\vec{\delta}|\tau)\stackrel{P}{\longrightarrow }\frac{1}{2}\gamma^{-1}\vec{\delta}^T G(H^{-\gamma}) \vec{\delta}. \label{varproof1}
\end{eqnarray}
Before that, we provide some differential results.
Let $f_Y(y)$ and $f_Y(y|x)$ be the marginal density of $Y$ and conditional density of $Y$ given $X=x$, respectively.
From A3, $f_Y(y|x)=f_V(y|x)(1+o(1))=H(x)f_Y(y)(1+o(1))$ as $y\rightarrow \infty$. 
In addition, since $1=\partial F_V(q_V(\tau|x)|x)/\partial \tau=f_V(q_V(\tau|x)|x) \partial q_V(\tau|x)/\partial \tau$, we have $f_V(q_V(\tau|x)|x)=\{\partial q_V(\tau|x)/\partial \tau\}^{-1}$. 
Meanwhile, A4 and $q((1-\tau)/H(x))=\{(1-\tau)/H(x)\}^{-\gamma}L(H(x)/(1-\tau))(1+o(1))$ yield that $\partial q((1-\tau)/H(x))/\partial \tau =\gamma (1-\tau)^{-\gamma-1}H(x)^{\gamma}L(H(x)/(1-\tau))(1+o(1))$.
Consequently, as $\tau\rightarrow 1$, 
$$
f_Y(q_Y(\tau|x)|x) =\gamma^{-1}H(x)^{-\gamma} (1-\tau)^{\gamma+1}L(H(x)/(1-\tau))^{-1}(1+o(1)).
$$
Furthermore, by $L\in RV(0)$, $q(\tau)f_Y(q_Y(\tau|x)|x)/\{1-\tau\}=\gamma^{-1}H(x)^{-\gamma}(1+o(1)). $

We return to show (\ref{varproof1}).
Since 
\begin{eqnarray*}
G_n(\vec{\delta}|\tau)=\frac{1}{\sqrt{(1-\tau) n}}\sum_{i=1}^n \left(\int_0^{\vec{\delta}^T\vec{B}(x_i)}I(U_i\leq  s/a_n)-I(U_i\leq 0)ds\right),
\end{eqnarray*}
we obtain 
\begin{eqnarray*}
E[G_n(\vec{\delta}|\tau)]
&=&
\frac{1}{\sqrt{(1-\tau) n}}\sum_{i=1}^nE\left[\int_0^{\vec{\delta}^T\vec{B}(x_i)}I(U_i\leq  s/a_n)-I(U_i\leq 0)ds\right]\\
&=&
\frac{1}{\sqrt{(1-\tau) n}}\sum_{i=1}^n\left[\int_0^{\vec{\delta}^T\vec{B}(x_i)}F_Y(q_Y(\tau|x_i)+s/a_n|x_i)-F_Y(q_Y(\tau|x_i)|x_i)ds\right]\\
&=&
\sum_{i=1}^n\int_0^{\vec{\delta}^T\vec{B}(x_i)}\frac{f_Y(q_Y(\tau|x_i))}{a_n\sqrt{(1-\tau) n}}sds(1+o(1))\\
&=&
\frac{1}{n}\sum_{i=1}^n\int_0^{\vec{\delta}^T\vec{B}(x_i)}\frac{q(\tau)f_Y(q_Y(\tau|x_i))}{(1-\tau)}sds(1+o(1))\\
&=&2^{-1}\gamma^{-1}\vec{\delta}^T \left(\frac{1}{n}\sum_{i=1}^n H(x_i)^{-\gamma}\vec{B}(x_i)\vec{B}(x_i)^T\right)\vec{\delta}.
\end{eqnarray*}
From the simple but tedious calculation,
$V[G_n(\vec{\delta}|\tau)]=o(1)$ can be evaluated. 
These results yield that
\begin{eqnarray*}
E[G_n(\vec{\delta}|\tau)]\stackrel{P}{\longrightarrow }
\frac{1}{2}\gamma^{-1}\vec{\delta}^T G(H^{-\gamma})\vec{\delta}.
\end{eqnarray*}

Thus, $Q_n(\vec{\delta}|\tau)$ is asymptotically equivalent to 
\begin{eqnarray*}
Q_0(\vec{\delta}|\tau)&=&\left(W^T+\frac{\lambda}{a_n\sqrt{(1-\tau) n}}\vec{b}_0(\tau)^TD_m^TRD_m\right)\vec{\delta}\\
&&+\frac{1}{2}\vec{\delta}^T\left(\gamma^{-1} G(H^{-\gamma})+\frac{\lambda}{2a_n\sqrt{(1-\tau) n}}D_m^TRD_m\right)\vec{\delta}.
\end{eqnarray*}
By the convexity lemma (see, Pollard, 1991 and Knight, 1998), the minimizer of $Q_n$ and $Q_0$ are asymptotically equivalent and hence we have
\begin{eqnarray*}
\tilde{\vec{\delta}}=\left(\gamma^{-1} G(H^{-\gamma})+\frac{\lambda}{2a_n\sqrt{(1-\tau) n}}D_m^TRD_m\right)^{-1}\left(W-\frac{\lambda}{\sqrt{a_n(1-\tau) n}}D_m^TRD_m\vec{b}_0(\tau)\right)+o_P(1).
\end{eqnarray*}
Since $\tilde{q}_Y(\tau|x)-s_0(\tau|x)=\vec{B}(x)^T(\tilde{\vec{b}}(\tau)-\vec{b}_0(\tau))$, we obtain from $a_n=\sqrt{(1-\tau) n}/q(\tau)$ that
\begin{eqnarray}
&&\frac{\sqrt{(1-\tau) n}}{q(\tau)} (\tilde{q}_Y(\tau|x)-s_0(\tau|x))\nonumber\\
&&=\vec{B}(x)^T\left(\gamma^{-1} G(H^{-\gamma})+\frac{\lambda q(\tau)}{2(1-\tau) n}D_m^TRD_m\right)^{-1}W\nonumber\\
&&\quad -\frac{\lambda}{\sqrt{(1-\tau) n}}\vec{B}(x)^T\left(\gamma^{-1} G(H^{-\gamma})+\frac{\lambda q(\tau)}{2(1-\tau) n}D_m^TRD_m\right)^{-1}D_m^TRD_m\vec{b}_0(\tau)\nonumber\\
&&\quad\quad  +o_P(1). \label{as.sp1}
\end{eqnarray}
The second term of right hand side of (\ref{as.sp1}) is the shrinkage bias. 
Consequently, as $n\rightarrow \infty$, 
$$
\frac{\sqrt{(1-\tau) n}}{q(\tau)} \frac{\tilde{q}_Y(\tau|x)-q_Y(\tau|x)-b_\lambda(\tau|x)}{\sqrt{\vec{B}(x)^T\Lambda(H^{-\gamma})^{-1}G \Lambda(H^{-\gamma})^{-1}\vec{B}(x)}} \stackrel{D}{\longrightarrow } N(0,1).
$$

Finally, we obtain 
\begin{eqnarray}
E[\{\hat{q}_Y(\tau|x)-q_Y(\tau|x)\}^2]&=&b_\lambda(\tau|x)^2+v(\tau|x)\nonumber\\
&=&
O\left(q(\tau)^2\frac{\lambda q(\tau)}{(1-\tau) n}\right)+O\left(\frac{q(\tau)^2}{(1-\tau)n}\left(\frac{\lambda q(\tau)}{(1-\tau) n}\right)^{-1/2m}\right). \label{MISEinter}
\end{eqnarray}
We now derive the optimal rate of convergence of MISE of $\tilde{q}_Y(\tau|x)$. 
For the constant $C_1>0, C_2>0$, the solution of 
\begin{eqnarray*}
C_1q(\tau)\lambda-C_2 \left(\frac{ q(\tau)}{(1-\tau) n}\right)^{-1/2m}\lambda^{-1/2m}=0
\end{eqnarray*}
is $\lambda=C q(\tau_I)^{-1}\{n(1-\tau_I)\}^{1/(2m+1)}$ for $C>0$. 
By applying this $\lambda$ in (\ref{MISEinter}), we obtain
$$
E\left[\left\{\frac{\hat{q}_Y(\tau|x)}{q_Y(\tau|x)}-1\right\}^2\right]=O(\{(1-\tau)n\}^{-2m/(2m+1)}),
$$
which completes the proof.
\end{proof}

To improve the outlook, we now describe about the asymptotic bias and variance of $\hat{\gamma}(x)$ before prove Theorem 3. 
Define
\begin{eqnarray}
b(k|x)=\frac{1}{k-1}\sum_{j=1}^{k-1} \frac{b_\lambda(\tau_j|x)}{q_Y(\tau_j|x)}-\frac{b_\lambda(\tau_k|x)}{q_Y(\tau_k|x)},\ \ \vec{v}(k|x)=\frac{1}{k-1}\sum_{j=1}^{k-1} \frac{\vec{\nu}(\tau_j|x)}{q_Y(\tau_j|x)}-\frac{\vec{\nu}(\tau_k|x)}{q_Y(\tau_k|x)}, \label{asbiasvarianceevi}
\end{eqnarray}
where 
$$
\vec{\nu}(\tau|x)=\frac{q(\tau)}{\sqrt{(1-\tau)n}}G^{1/2}\Lambda(H^{-\gamma})^{-1}\vec{B}(x).
$$
As the result, $b(k|x)$ and $\sqrt{\vec{v}(k|x)^T\vec{v}(k|x)}$ are the asymptotic bias and standard deviation of $\hat{\gamma}(x)$. 
We here get the asymptotic order of $b(k|x)$ and $v(k|x)$ from easy calculation.

Since $b_\lambda(\tau_j|x)/q_Y(\tau_j|x)=O(\{(1-\tau_j)n\}^{-m/(2m+1)})$ and each element of $\vec{\nu}(\tau_j|x)$ has $O(\{(1-\tau_j)n\}^{-m/(2m+1)})$, we have
\begin{eqnarray*}
b(k|x)=O\left(\frac{1}{k-1}\sum_{j=1}^{k-1} \{(1-\tau_j)n\}^{-\frac{m}{2m+1}}-\{(1-\tau_k)n\}^{-\frac{m}{2m+1}}\right)
\end{eqnarray*}
and 
\begin{eqnarray*}
\vec{v}(k|x)=O\left(\frac{1}{k-1}\sum_{j=1}^{k-1} \{(1-\tau_j)n\}^{-\frac{m}{2m+1}}-\{(1-\tau_k)n\}^{-\frac{m}{2m+1}}\right).
\end{eqnarray*}
We then have from $[n^\eta]/k\rightarrow 0 (n,k\rightarrow \infty)$ that 
\begin{eqnarray*}
\frac{1}{k}\sum_{j=1}^k \{(1-\tau_j)n\}^{-\frac{m}{2m+1}}
&=&
\frac{1}{k}\sum_{j=1}^k \left\{\frac{[n^\eta]+j}{n+1}n\right\}^{-\frac{m}{2m+1}}\\
&=&
k^{-\frac{m}{2m+1}} \frac{1}{k}\sum_{j=1}^k \left\{\frac{[n^\eta]+j}{k+1}\right\}^{-\frac{m}{2m+1}}\\
&=&k^{-\frac{m}{2m+1}} \int_0^1 u^{-\frac{m}{2m+1}}du(1+o(1))\\
&=&\frac{2m+1}{m}k^{-\frac{m}{2m+1}}(1+o(1))
\end{eqnarray*}
and 
\begin{eqnarray*}
\{(1-\tau_k)n\}^{-\frac{m}{2m+1}}
=\left\{\frac{[n^\eta]+k}{n+1}n\right\}^{-\frac{m}{2m+1}}
=k^{-\frac{m}{2m+1}} (1+o(1)).
\end{eqnarray*}
This indicates that 
$b(k|x)=O(k^{-\frac{m}{2m+1}})$
and $\vec{v}(k|x)=O(k^{-\frac{m}{2m+1}})$.

\begin{proof}[Proof of Theorem 3] 
Theorem 1 indicates that $\tilde{q}_Y(\tau_k|x)-q_Y(\tau_k|x)=o_P(1)$ as $n\rightarrow \infty$.
Therefore, the proposed estimator can be calculated as for $x\in\mathbb{R}$,
\begin{eqnarray*}
\hat{\gamma}(x)
&=&
\frac{1}{k-1}\sum_{j=1}^{k-1} \log \frac{\hat{q}_Y(\tau_j|x)}{\hat{q}_Y(\tau_k|x)}\\
&=&
\frac{1}{k-1}\sum_{j=1}^{k-1} \log \frac{q_Y(\tau_j|x)\left\{1+\frac{\hat{q}_Y(\tau_j|x)-q_Y(\tau_j|x)}{q_Y(\tau_j|x)}\right\}}{q_Y(\tau_k|x)\left\{1+\frac{\hat{q}_Y(\tau_k|x)-q_Y(\tau_k|x)}{q_Y(\tau_k|x)}\right\}}\\
&=&
\frac{1}{k-1}\sum_{j=1}^{k-1} \log \frac{q_Y(\tau_j|x)}{q_Y(\tau_k|x)}+\frac{1}{k-1}\sum_{j=1}^{k-1} \log \frac{\left\{1+\frac{\hat{q}_Y(\tau_j|x)-q_Y(\tau_j|x)}{q_Y(\tau_j|x)}\right\}}{\left\{1+\frac{\hat{q}_Y(\tau_k|x)-q_Y(\tau_k|x)}{q_Y(\tau_k|x)}\right\}}\\
&=&
\frac{1}{k-1}\sum_{j=1}^{k-1} \log \frac{q_Y(\tau_j|x)}{q_Y(\tau_k|x)}+\frac{1}{k-1}\sum_{j=1}^{k-1} \log \left\{1+\frac{\hat{q}_Y(\tau_j|x)-q_Y(\tau_j|x)}{q_Y(\tau_j|x)}\right\}\\
&&-\frac{1}{k-1}\sum_{j=1}^{k-1} \log 
\left\{1+\frac{\hat{q}_Y(\tau_k|x)-q_Y(\tau_k|x)}{q_Y(\tau_k|x)}\right\}\\
&=&
\frac{1}{k-1}\sum_{j=1}^{k-1} \log \frac{q_Y(\tau_j|x)}{q_Y(\tau_k|x)}\\
&&+\frac{1}{k-1}\sum_{j=1}^{k-1} \left\{\frac{\hat{q}_Y(\tau_j|x)-q_Y(\tau_j|x)}{q_Y(\tau_j|x)}- \frac{\hat{q}_Y(\tau_k|x)-q_Y(\tau_k|x)}{q_Y(\tau_k|x)}\right\}(1+o_P(1))\\
&\equiv& D_{1n}+D_{2n}. 
\end{eqnarray*}
We then note that $D_{1n}$ is not random variable. 
Similar to the proof of Theorem 2.3 of Wang et al. (2012), we have as $k\rightarrow \infty$,
$$
D_{1n}=\gamma+O(k^{-1/2})=\gamma+o(k^{-m/(2m+1)})
$$
for $m\geq 1$.
Next, we consider $D_{2n}$. 
Under the conditions for Theorem 3, using the result of Theorems 1--2 and the property of $B$-spline basis, we have 
\begin{eqnarray*}
\frac{\tilde{q}_Y(\tau_j|x)-q_Y(\tau_j|x)}{q_Y(\tau_j|x)}
&=&
\frac{b_\lambda(\tau_j|x)+\vec{\nu}(\tau_j|x)^T \vec{W}}{q_Y(\tau_j|x)}(1+o_P(1)),
\end{eqnarray*}
where $\vec{W}\sim N_{K+p}(\vec{0},I)$. 
Therefore, $D_{2n}$ can be evaluated as 
\begin{eqnarray*}
D_{2n}&=& \{b(k|x)+ \vec{v}(k|x)^T \vec{W}\}(1+o_P(1))
\end{eqnarray*}
That is, 
$$
\hat{\gamma}(x)=\gamma+ b(k|x)+ \vec{v}(k|x)^T \vec{W} + o(k^{-m/(2m+1)}),
$$
where $b(k|x)=O(k^{-m/(2m+1)})$ and $\vec{v}(k|x)=O(k^{-m/(2m+1)})$.
Consequently, we get
$$
\frac{\hat{\gamma}(x)-\gamma-b(k|x)}{\sqrt{\vec{v}(k|x)^T\vec{v}(k|x)}}\stackrel{D}{\longrightarrow } N(0,1)
$$
and $E[\{\hat{\gamma}(x)-\gamma\}^2]=O(k^{-2m/(2m+1)})$. 
For the common index version $\hat{\gamma}^{C}$, similar to above, the straightforward calculation yields that 
$$
\hat{\gamma}^{C}=\frac{1}{n}\sum_{i=1}^n \hat{\gamma}(x_i)=\gamma+ E[b(k|X)]+ E[\vec{v}(k|X)]^T \vec{W} + o(k^{-m/(2m+1)}).
$$ 
This completes the proof. 
\end{proof}

\begin{proof}[Proof of Theorem 4]
First, the second order condition for $U_Y(1/(1-\tau)|x)=q_Y(\tau|x)$ yields that 
$$
\frac{q_Y(\tau_I|x)}{q_Y(\tau_E|x)}=\left(\frac{U_Y(1/(1-\tau_E)|x)}{U_Y(1/(1-\tau_I)|x)}\right)^{-1}=\left(\frac{1-\tau_I}{1-\tau_E}\right)^{-\gamma}\{1+o(k^{-m/(2m+1)})\}.
$$
Furthermore, the result of Theorem 3 indicates that 
\begin{eqnarray*}
\left(\frac{1-\tau_I}{1-\tau_E}\right)^{\hat{\gamma}(x)-\gamma}
&=&
\exp\left[(\hat{\gamma}(x)-\gamma)\log\left(\frac{1-\tau_I}{1-\tau_E}\right)\right]\\
&=&1 +(\hat{\gamma}(x)-\gamma)\log\left(\frac{1-\tau_I}{1-\tau_E}\right)(1+o_P(1))\\
&=&1 +\{b(k|x)+\vec{v}(k|x)^T\vec{W}\}\log\left(\frac{1-\tau_I}{1-\tau_E}\right)(1+o_P(1)).
\end{eqnarray*}
Meanwhile, we obtain
$$
\frac{\hat{q}_Y(\tau_I|x)}{q_Y(\tau_I|x)}=1+\frac{b_\lambda(\tau_I|x)}{q_Y(\tau_I|x)}+\frac{\vec{\nu}(\tau_I|x)^T}{q_Y(\tau_I|x)}\vec{W}+o_P(k^{-m/(2m+1)}),
$$
where $\vec{W}$ is that given in the proof of Theorem 3.
Using above, we have 
\begin{eqnarray*}
\frac{\hat{q}_Y(\tau_E|x)}{q_Y(\tau_E|x)}
&=&
\left(\frac{1-\tau_I}{1-\tau_E}\right)^{\hat{\gamma}(x)}\frac{\hat{q}_Y(\tau_I|x)}{q_Y(\tau_E|x)}\\
&=&
\left(\frac{1-\tau_I}{1-\tau_E}\right)^{\hat{\gamma}(x)}\frac{\hat{q}_Y(\tau_I|x)}{q_Y(\tau_I|x)}\frac{q_Y(\tau_I|x)}{q_Y(\tau_E|x)}\\
&=&
\left(\frac{1-\tau_I}{1-\tau_E}\right)^{\hat{\gamma}(x)}\frac{\hat{q}_Y(\tau_I|x)}{q_Y(\tau_I|x)}(1+o(k^{-m/(2m+1)}))\\
&=&
\left\{1 +(b(k|x)+\vec{v}(k|x)^T)\vec{W})\log\left(\frac{1-\tau_I}{1-\tau_E}\right)(1+o_P(1))\right\}\\
&&\times\left[1+\left\{\frac{b_\lambda(\tau_I|x)}{q_Y(\tau_I|x)}+\frac{\vec{\nu}(\tau_I|x)^T}{q_Y(\tau_I|x)}\vec{W}\right\}(1+o_P(1))\right](1+o(k^{-m/(2m+1)}))\\
&=&
1+
\left\{\log\left(\frac{1-\tau_I}{1-\tau_E}\right)\vec{v}(k|x)+\frac{\vec{\nu}(\tau_I|x)}{q_Y(\tau_I|x)}\right\}^T\vec{W}\\
&&
+b(k|x)\log\left(\frac{1-\tau_I}{1-\tau_E}\right)+\frac{b_\lambda(\tau_I|x)}{q_Y(\tau_I|x)}\\
&&+o\left(k^{-m/(2m+1)}\log\left(\frac{1-\tau_I}{1-\tau_E}\right)\right)+o_P(\{n(1-\tau_I)\}^{-m/(2m+1)}).
\end{eqnarray*}
Consequently, we obtain 
$$
\frac{\frac{\hat{q}_Y(\tau_E|x)}{q_Y(\tau_E|x)}-1-bias(\tau_E|x)}{s(\tau_E|x)}\stackrel{D}{\longrightarrow } N(0,1),
$$
where 
\begin{eqnarray}
bias(\tau_E|x)=b(k|x)\log\left(\frac{1-\tau_I}{1-\tau_E}\right)+\frac{b_\lambda(\tau_I|x)}{q_Y(\tau_I|x)} \label{biasExtrapolation}
\end{eqnarray}
and 
\begin{eqnarray}
s(\tau_E|x)= \left|\left|\log\left(\frac{1-\tau_I}{1-\tau_E}\right)\vec{v}(k|x)+q_Y(\tau_I|x)^{-1}\vec{\nu}(\tau_I|x)\right|\right|. \label{sdExtrapolation}
\end{eqnarray}
Here, for a vector $\vec{a}$, $||\vec{a}||$ means the $\ell_2$-norm of $\vec{a}$.
Furthermore, we get
$$
E\left[\left\{\frac{\hat{q}_Y(\tau_E|x)}{q_Y(\tau_E|x)}-1\right\}^2\right]=O\left(\max\left\{k^{-\frac{2m}{2m+1}}\log^2\left(\frac{1-\tau_I}{1-\tau_E}\right),\{(1-\tau_I)n\}^{-\frac{2m}{2m+1}}\right\}\right).
$$
Similarly, for the common index estimator $\hat{q}_Y^{C}(\tau_E|x)$, we have 
\begin{eqnarray*}
\frac{\hat{q}^{C}_Y(\tau_E|x)}{q_Y(\tau_E|x)}&=&1 +E[b(k|X)]\log\left(\frac{1-\tau_I}{1-\tau_E}\right)+\frac{b_\lambda(\tau_I|x)}{q_Y(\tau_I|x)}\\
&&+\left\{E[\vec{v}(k|X)]\log\left(\frac{1-\tau_I}{1-\tau_E}\right)+\frac{\vec{\nu}(\tau_I|x)}{q_Y(\tau_I|x)}\right\}^{T}\vec{W}\\
&&\quad +o_P\left(k^{-m/(2m+1)}\log\left(\frac{1-\tau_I}{1-\tau_E}\right)\right)+o_P(\{(1-\tau_I)n\}^{-m/(2m+1)}).
\end{eqnarray*} 
Accordingly, 
$$
\frac{\frac{\hat{q}^{C}_Y(\tau_E|x)}{q_Y(\tau_E|x)}-1-bias^C(x)}{s^C(\tau_E|x)}\stackrel{D}{\longrightarrow } N(0,1),
$$
where 
\begin{eqnarray}
bias^C(\tau_E|x)=E[b(k|X)]\log\left(\frac{1-\tau_I}{1-\tau_E}\right)+\frac{b_\lambda(\tau_I|x)}{q_Y(\tau_I|x)}
\label{biasExtrapolationC}
\end{eqnarray}
and 
\begin{eqnarray}
s^C(\tau_E|x)=\left|\left|E[\vec{v}(k|X)]\log\left(\frac{1-\tau_I}{1-\tau_E}\right)+\frac{\vec{\nu}(\tau_I|x)}{q_Y(\tau_I|x)}\right|\right|.
\label{sdExtrapolationC}
\end{eqnarray}
Finally, we obtain the optimal rate of convergence of MISE of the common index estimator as 
$$
E\left[\left\{\frac{\hat{q}^{C}_Y(\tau_E|x)}{q_Y(\tau_E|x)}-1\right\}^2\right]=O\left(\max\left\{k^{-\frac{2m}{2m+1}}\log^2\left(\frac{1-\tau_I}{1-\tau_E}\right),\{(1-\tau_I)n\}^{-\frac{2m}{2m+1}}\right\}\right).
$$
\end{proof}

\noindent{\bf Acknowledgements}

The authors are grateful to Associate Editor and anonymous referees for their valuable comments and suggestions, which led to improvements of the paper. 
The research of the author was partially supported by KAKENHI 18K18011.

\def\bibname{Reference}

\end{document}